\documentclass[letter,11pt]{article}

\usepackage[margin=1in]{geometry}
\usepackage[useregional]{datetime2}
\usepackage[english]{babel}
\usepackage[utf8]{inputenc}
\usepackage{xcolor, graphicx,changepage,xcolor, float, array, amssymb, amsmath, mathrsfs, amsfonts, hyperref, url, amsthm, tikz-cd, fancyhdr, enumerate, bbm, tikz-cd}
\makeatletter
\newcommand*{\rom}[1]{\expandafter\@slowromancap\romannumeral #1@}
\makeatother

\pagestyle{fancy}
\fancyhf{}
\lhead{}
\rhead{When normality and distribution normality coincide}
\cfoot{\thepage} 
\pagenumbering{arabic}
\makeatletter
\renewcommand*\env@matrix[1][*\c@MaxMatrixCols c]{%
	\hskip -\arraycolsep
	\let\@ifnextchar\new@ifnextchar
	\array{#1}}
\makeatother

\theoremstyle{definition}
\newtheorem{theorem}{Theorem}[section]
\newtheorem{lemma}[theorem]{Lemma}
\newtheorem{definition}[theorem]{Definition}
\newtheorem{conjecture}[theorem]{Conjecture}
\newtheorem{question}[theorem]{Question}
\newtheorem{corollary}[theorem]{Corollary}
\newtheorem{example}[theorem]{Example}

\setlength{\headheight}{14pt}
\theoremstyle{remark}
\newtheorem{remark}[theorem]{Remark}

\usepackage{setspace}
\onehalfspacing


\newcommand{\labeq}[1]{\label{eq:#1}}

\newcommand{\labt}[1]{\label{thm:#1}}
\newcommand{\reft}[1]{Theorem~\ref{thm:#1}}

\newcommand{\refs}[1]{Section~\ref{section:#1}}
\newcommand{\labs}[1]{\label{section:#1}}

\newcommand{\labq}[1]{\label{question:#1}}
\newcommand{\refq}[1]{Question~\ref{question:#1}}

\newcommand{\N}{\mathbb{N}}
\newcommand{\Q}{\mathbb{Q}}
\newcommand{\Z}{\mathbb{Z}}


\newcommand{\NQ}{\mathcal{N}(Q)}

\newcommand{\DNQ}{\mathcal{DN}(Q)}
 
\newcommand{\RNQ}{\mathcal{RN}(Q)}


\newcommand{\bp}{\boldsymbol{\Pi}}
\newcommand{\bP}{\boldsymbol{\Pi}}
\newcommand{\bS}{\boldsymbol{\Sigma}}

\newcommand{\pzt}{{\bP_3^0}}

\newcommand{\dimh}[1]{\hbox{$\dim_{\hbox{H}}$}\left( #1\right)}

\title{The Theory of Normality for Dynamically Generated Cantor Series Expansions}
\author{Sohail Farhangi\footnote{Department of Mathematics and Informatics, University of Adam Mickiewicz, Uniwersytetu Poznańskiego 4 Street, 61-614 Poznań, Poland. Email: sohail.farhangi@gmail.com}\ \footnote{Beijing Institute of Mathematical Sciences and Applications, Beijing, P.R.C. 101408.}\ , Bill Mance\footnote{Department of Mathematics and Informatics, University of Adam Mickiewicz, Uniwersytetu Poznańskiego 4 Street, 61-614 Poznań, Poland. Email: william.mance@amu.edu.pl}}

\begin{document}
\maketitle

\begin{abstract}
    The theory of normality for base $g$ expansions of real numbers in $[0,1)$ is rich and well developed.
    Similar theories have been developed for many other numeration systems, such as the regular continued fraction expansion, $\beta$-expansions, and L\"uroth series expansions.
    
    Let $Q=(q_n)_{n \in \mathbb{N}}$ be a sequence of integers greater than or equal to 2. The \textbf{$Q$-Cantor series expansion} of $x \in [0,1)$ is the unique sum of the form $x=\sum_{n=1}^\infty \frac{x_n}{q_1q_2\cdots q_n}$, where $x_n \neq q_n-1$ infinitely often.
    For the Cantor series expansions, most of the literature thus far considers $Q$ where the theory of normality differs drastically from that of the base $g$ expansions. 
    We introduce the class of \textbf{dynamically generated Cantor series expansions}, which is a large class of Cantor series expansions for which much of the classical theory of base $g$ expansions can be developed in parallel. This class includes many examples such as the Thue-Morse sequence on $\{2,3\}$ and translated Champernowne numbers.
    
    A special case of our main results is that if $Q$ is a bounded basic sequence that is dynamically generated by an ergodic system having zero entropy, then normality base $Q$ coincides with distribution normality base $Q$, and $Q$ possesses a Hot Spot Theorem.
\end{abstract}
\section{Introduction}
\subsection{Normal numbers}

  A real number $x \in [0,1)$  is \textbf{normal  in base $g$} if for all natural numbers $k$, all blocks of digits of length $k$ in base $g$
  occur with relative frequency $g^{-k}$ in the $g$-ary expansion of $x$.\footnote{Every result in this paper is true and can be stated if we consider real numbers instead of members of $[0,1)$.}
  We denote this set by $\mathcal{N}_g$.
  There is an enormous literature on normal numbers and the interested reader is referred to the book of Bugeaud \cite{BugeaudBook}. 
  Furthermore, many of the concepts and theorems describing normal numbers have been fully or partially extended to normal numbers in other numeration systems such as continued fraction expansions, $\beta$-expansions, L\"uroth series expansions, and so on.

We wish to mention one of the most fundamental and important results relating to normal numbers in base $g$.
The following is due to D. D. Wall in his Ph.D. dissertation \cite{Wall}.

\begin{theorem}[D. D. Wall]\labt{wall}
A real number $x$ is normal in base $g$ if and only if the sequence $(g^nx)_{n = 1}^\infty$ is uniformly distributed mod $1$.
\end{theorem}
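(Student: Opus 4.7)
The plan is to connect the orbit $(g^n x \bmod 1)$ directly to the shifted digit sequence of $x$, and then to use the fact that the $g$-adic intervals form a convenient generating family for all subintervals of $[0,1)$.

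First I would fix the base-$g$ expansion $x = \sum_{k=1}^\infty x_k g^{-k}$ and note that if $T(y) := gy \bmod 1$ denotes the $g$-shift of $[0,1)$, then $T^n(x) = 0.x_{n+1}x_{n+2}\cdots$ in base $g$. Consequently, for any finite block $B = b_1\cdots b_k$, the block $B$ appears starting at position $n+1$ in the expansion of $x$ if and only if $T^n(x)$ lies in the $g$-adic interval
\[
I_B \;:=\; \bigl[\,0.b_1\cdots b_k,\; 0.b_1\cdots b_k + g^{-k}\bigr),
\]
whose length is $g^{-k}$. Thus the asymptotic frequency of $B$ in the expansion equals the asymptotic frequency of visits of $(g^n x \bmod 1)$ to $I_B$.

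For the forward direction, if $(g^n x)_{n\geq 1}$ is uniformly distributed mod $1$ then applying the definition of uniform distribution to each $I_B$ shows that its visit frequency is $|I_B| = g^{-k}$, which is precisely normality in base $g$. For the converse, I would assume $x$ is normal, so that every cylinder $I_B$ with $|B| = k$ has exact visit frequency $g^{-k} = |I_B|$. To deduce uniform distribution I would approximate an arbitrary $[a,b) \subseteq [0,1)$ from inside and outside by finite disjoint unions of length-$k$ cylinders: given $\epsilon > 0$, choose $k$ large enough and unions $U \subseteq [a,b) \subseteq V$ with $|V \setminus U| < \epsilon$. Additivity of asymptotic density (valid for \emph{finite} unions) combined with the exact cylinder frequencies pins the visit frequency of $[a,b)$ to within $\epsilon$ of $b - a$; letting $\epsilon \to 0$ finishes the proof.

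The only real technical point is the sandwich step in the converse: although each individual cylinder has exact asymptotic visit frequency, one has to verify that
\[
\liminf_{N \to \infty} \tfrac{1}{N}\#\{\,n < N : T^n x \in [a,b)\,\} \;=\; \limsup_{N \to \infty} \tfrac{1}{N}\#\{\,n < N : T^n x \in [a,b)\,\} \;=\; b - a,
\]
by sandwiching between the frequencies for $U$ and $V$ and then letting $k \to \infty$ (and $\epsilon \to 0$). This is routine once the inner/outer cylinder approximations are set up. A more analytic alternative would be to verify Weyl's exponential-sum criterion directly by writing $e^{2\pi i m g^n x}$ as a step function on cylinders, but the cylinder-set approach above is cleaner and the one I would carry out.
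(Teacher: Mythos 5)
Your proof is correct and is the standard cylinder-set argument for Wall's theorem: the identification of block occurrences with visits of the orbit of $x$ under $y \mapsto gy \bmod 1$ to $g$-adic intervals, followed by inner/outer approximation of an arbitrary interval by finite unions of cylinders for the converse. The paper itself gives no proof of this statement (it is quoted from Wall's dissertation with a citation and the remark that it ``is not difficult to prove''), so there is nothing to compare against; your write-up fills that gap correctly, and the one technical point you flag (sandwiching the $\liminf$ and $\limsup$ between the exact frequencies of the approximating unions $U \subseteq [a,b) \subseteq V$) is handled properly.
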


While it is not difficult to prove \reft{wall}, its importance in the theory of normal numbers can not be overstated.
Large portions of the theory of normal numbers in base $g$ make use of \reft{wall}. 
For example, it is easy to show that rational addition preserves normality in base $g$ by invoking \reft{wall}. That is, $q+\mathcal{N}(g) \subseteq \mathcal{N}(g)$ for every rational $q$. 
We remark that the set of all real numbers that preserve normality was fully characterized by Rauzy \cite{NormalityPreservationByAddition}. 
See also Bergelson and Downarowicz \cite{BergDownNormalityPreservation}.
Another example of a classical result in the theory of normal numbers that makes use of \reft{wall} is the Kamae-Weiss selection rule \cite{SelectionRules1,SelectionRules2}.

In addition to \reft{wall}, one can consider variations of the (Classical) Hot Spot Theorem to be as fundamental as \reft{wall}. 
Originally due to Pyatetskii-Shapiro \cite{OriginalHotSpot}, the Classical Hot Spot Theorem  simplifies the task of determining whether or not a given $x \in [0,1)$ is normal base-$g$ and is considered an essential tool for both proving theorems about normal numbers and providing constructions. 
We give two equivalent formulations.
\begin{theorem}\labt{HotSpot1}
    Given $g \in \mathbb{N}_{\ge 2}$ and $y \in [0,1)$, $y$ is normal base-$g$ if there exists a constant $C \ge 1$ such that for any $0 \le a < b \le 1$ we have

    \begin{equation}
        \limsup_{N\rightarrow\infty}\frac{1}{N}\left|\left\{1 \le n \le N\ |\ \{g^ny\} \in (a,b)\right\}\right| \le C(b-a),
    \end{equation}.
\end{theorem}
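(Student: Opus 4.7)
The plan is to invoke \reft{wall}, which reduces the statement to showing that the orbit $(\{g^n y\})_{n \ge 1}$ is uniformly distributed modulo $1$; equivalently, that the empirical measures
$$\mu_N := \frac{1}{N}\sum_{n=1}^N \delta_{\{g^n y\}}$$
converge to Lebesgue measure $\lambda$ on $[0,1)$ in the weak-$*$ topology. Since the space of Borel probability measures on $[0,1)$ is weak-$*$ compact, it suffices to show that every weak-$*$ accumulation point of $(\mu_N)$ equals $\lambda$.

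Fix such a point $\mu = \lim_k \mu_{N_k}$. First I would observe that $\mu$ is invariant under the shift $T(x) := gx \bmod 1$, since $T_*\mu_N - \mu_N = \frac{1}{N}\bigl(\delta_{\{g^{N+1}y\}} - \delta_{\{gy\}}\bigr)$ has total variation $O(1/N)$ and therefore vanishes in the weak-$*$ limit. Second, I would translate the hypothesis into a bound on $\mu$: for all but countably many pairs $(a,b)$ the endpoints are $\mu$-continuity points, whence
$$\mu((a,b)) = \lim_{k \to \infty} \mu_{N_k}((a,b)) \le \limsup_{N \to \infty} \mu_N((a,b)) \le C(b-a).$$
Approximating arbitrary intervals from outside by continuity intervals then yields $\mu(I) \le C \cdot \lambda(I)$ for every interval $I$, so $\mu \ll \lambda$ with density $f := d\mu/d\lambda$ satisfying $f \le C$ almost everywhere.

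The main obstacle is the concluding step: showing that the only such $\mu$ is $\lambda$ itself. Here I would invoke the classical fact that Lebesgue measure is the unique $T$-invariant Borel probability measure on $[0,1)$ that is absolutely continuous with respect to $\lambda$. Explicitly, the $T$-invariance of $\mu$ is equivalent to $Lf = f$, where
$$(L\varphi)(x) := \frac{1}{g}\sum_{i=0}^{g-1} \varphi\!\left(\frac{x+i}{g}\right)$$
is the Perron--Frobenius operator of $T$; since $T$ is exact with respect to $\lambda$, one has $L^k\varphi \to \int_0^1 \varphi\,d\lambda$ in $L^1$ for every $\varphi \in L^1$, so iterating $Lf = f$ forces $f \equiv \int_0^1 f\,d\lambda = 1$ almost everywhere. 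Consequently every weak-$*$ accumulation point of $(\mu_N)$ coincides with $\lambda$, and the theorem follows from \reft{wall}.
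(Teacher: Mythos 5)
Your argument is correct, but note that the paper itself does not prove \reft{HotSpot1}: it is quoted as a classical result of Pyatetskii-Shapiro \cite{OriginalHotSpot}, whose original argument is a combinatorial block-counting one, so there is no in-paper proof to compare against. Your route is the standard ergodic-theoretic alternative: pass to weak-$*$ accumulation points of the empirical measures, show they are invariant under $x\mapsto gx\bmod 1$ and dominated by $C\lambda$, and conclude by uniqueness of the absolutely continuous invariant measure, after which \reft{wall} converts uniform distribution into normality. Two points of hygiene. First, the compactness and invariance arguments should be carried out on the circle $\mathbb{R}/\mathbb{Z}$ rather than on $[0,1)$: the space of probability measures on the half-open interval is not weak-$*$ compact and $x\mapsto gx\bmod 1$ is not continuous there; the identification is harmless because your bound $\mu((a,b))\le C(b-a)$ rules out an atom at $0\equiv 1$, and an interval wrapping around $0$ is a union of two ordinary intervals so the hypothesis still applies. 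Second, for the last step the appeal to exactness and the $L^1$-convergence of the transfer operator is a heavier tool than necessary: ergodicity of $\lambda$ for the $g$-adic map already forces any invariant $\mu\ll\lambda$ to coincide with $\lambda$ (apply the pointwise ergodic theorem for $\lambda$, note the limit is $\int\varphi\,d\lambda$ at $\mu$-almost every point since $\mu\ll\lambda$, and integrate against $\mu$). Compared with the block-counting proof, your approach is shorter and visibly generalizes to other expanding maps with a unique absolutely continuous invariant measure; its cost is the reliance on \reft{wall} and on weak-$*$ limit machinery, neither of which the elementary proof needs.
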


\begin{theorem}\labt{HotSpot2}
    Let $y \in [0,1)$ have base-$g$ expansion $y = 0.y_1y_2\cdots y_n\cdots$.
    If there exists $C \ge 1$ such that for every $w = (w_1,\cdots,w_\ell) \in \{0,1,\cdots,g-1\}^\ell$ we have

    \begin{equation}
        \limsup_{N\rightarrow\infty}\frac{1}{N}|\{1 \le n \le N\ |\ w = (y_n,y_{n+1},\cdots,y_{n+\ell-1})\}| \le \frac{C}{g^\ell},
    \end{equation}
    then $x$ is normal base-$g$.
\end{theorem}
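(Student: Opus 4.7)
The plan is to translate the hypothesis into a statement about shift-invariant measures on the symbolic space $\Omega = \{0,1,\ldots,g-1\}^{\mathbb{N}}$ and then exploit the ergodicity of the Bernoulli measure $\beta_g$ that assigns mass $g^{-\ell}$ to every length-$\ell$ cylinder. Identify $y$ with its digit sequence, let $\sigma$ denote the left shift on $\Omega$, and form the empirical measures $\mu_N := \frac{1}{N}\sum_{n=1}^{N}\delta_{\sigma^{n-1} y}$. Because $\Omega$ is a compact metric space, the space of Borel probability measures on $\Omega$ is weak-$*$ compact, and a standard telescoping argument shows that every weak-$*$ accumulation point of $(\mu_N)$ is $\sigma$-invariant. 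Normality base-$g$ of $y$ is equivalent to $\mu_N \to \beta_g$ in the weak-$*$ topology, so it suffices to show that every accumulation point $\mu$ of $(\mu_N)$ equals $\beta_g$.

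For a fixed cylinder $[w]$ of length $\ell$, the indicator $\mathbf{1}_{[w]}$ is continuous (cylinders are clopen), and $\mu_N([w])$ is precisely the frequency appearing in the hypothesis. Consequently, any weak-$*$ limit $\mu$ of a subsequence $(\mu_{N_k})$ satisfies $\mu([w]) \le C g^{-\ell} = C\,\beta_g([w])$. Since cylinders generate the Borel $\sigma$-algebra, this bound extends by outer regularity (through finite and then countable disjoint unions of cylinders) to $\mu(B) \le C\,\beta_g(B)$ for every Borel $B$, yielding $\mu \ll \beta_g$ with Radon--Nikod\'ym derivative bounded by $C$.

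The final step promotes absolute continuity to equality via ergodicity. The Bernoulli measure $\beta_g$ is ergodic (indeed mixing) under $\sigma$, so by Birkhoff's ergodic theorem, for each cylinder $[w]$ the averages $\frac{1}{N}\sum_{n=0}^{N-1}\mathbf{1}_{[w]}(\sigma^n x)$ converge to $\beta_g([w])$ for $\beta_g$-a.e.\ $x$, and hence for $\mu$-a.e.\ $x$ as well. Integrating this convergence against $\mu$, applying dominated convergence, and using $\sigma$-invariance of $\mu$ to rewrite each integrated term as $\mu([w])$, one concludes $\mu([w]) = \beta_g([w])$ for every cylinder, hence $\mu = \beta_g$. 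Because every accumulation point of $(\mu_N)$ equals $\beta_g$ and the ambient space is compact, the full sequence converges to $\beta_g$, proving normality.

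The main obstacle I expect is purely bookkeeping: verifying that the one-sided inequality $\mu([w]) \le C\beta_g([w])$ on cylinders genuinely lifts to absolute continuity on the whole Borel $\sigma$-algebra, and converting subsequential weak-$*$ convergence into convergence of the entire sequence $(\mu_N)$. The ergodicity step itself is a textbook consequence of the Bernoulli shift being mixing and requires no input tailored to the specific point $y$.
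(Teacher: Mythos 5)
Your argument is correct, but there is nothing in the paper to compare it against: Theorem \ref{thm:HotSpot2} is stated as the Classical Hot Spot Theorem and attributed to Pyatetskii-Shapiro \cite{OriginalHotSpot}, with no proof supplied. Judged on its own, your ergodic-theoretic route is sound. The identification $\mu_N([w]) = \frac{1}{N}\#\{1\le n\le N : (y_n,\dots,y_{n+\ell-1})=w\}$ is exact since cylinders are clopen; the accumulation points of $(\mu_N)$ are shift-invariant by the usual telescoping estimate; and the step you flag as the main obstacle does go through, because in $\{0,\dots,g-1\}^{\mathbb{N}}$ every open set is a countable \emph{disjoint} union of cylinders, so $\mu(U)\le C\beta_g(U)$ for open $U$ and then $\mu(B)\le C\beta_g(B)$ for all Borel $B$ by outer regularity, giving $\mu\ll\beta_g$. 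The final step (a shift-invariant measure absolutely continuous with respect to an ergodic measure must coincide with it, via Birkhoff plus dominated convergence) is standard, as is upgrading subsequential to full convergence by compactness of the space of probability measures. The one point worth making explicit is that $y$ is handed to you with a fixed digit string, so the ambiguity of dual expansions never arises. For contrast, the original Pyatetskii-Shapiro proof (and the refinements by Postnikov, Bailey--Misiurewicz, and Bergelson--Vandehey cited in the paper, whose Theorem \ref{thm:StrongestHotSpot} the authors actually do use and adapt) is combinatorial, counting long blocks directly rather than passing to invariant measures; your approach trades that bookkeeping for the ergodicity of the Bernoulli measure and is arguably cleaner, though it does not by itself yield the quantitatively stronger variants such as Theorem \ref{thm:StrongestHotSpot} that the rest of the paper relies on.
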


In this paper, we are interested in a class of expansions known as the $Q$-Cantor series expansions
that includes the $g$-ary expansions as a special case, but do not always admit an extension of \reft{wall}, \reft{HotSpot1}, or \reft{HotSpot2} .
The study of normal numbers and other statistical properties of real numbers with respect to
large classes of Cantor series expansions was  first done by P.\ Erd\H{o}s
and A.\ R\'{e}nyi in \cite{ErdosRenyiConvergent} and \cite{ErdosRenyiFurther} and by
A.\ R\'{e}nyi in \cite{RenyiProbability}, \cite{Renyi}, and \cite{RenyiSurvey} and by P.\ Tur\'{a}n in \cite{Turan}.

The $Q$-Cantor series expansions, first studied by G.\ Cantor in \cite{Cantor},
are a natural generalization of the $b$-ary expansions.
G.\ Cantor's motivation to study the Cantor series expansions was to extend the
well-known proof of the irrationality of the number $e=\sum 1/n!$ to a larger class of numbers.
Results along these lines may be found in the monograph of J.\ Galambos \cite{Galambos}. 

We begin with definitions and notation that we will be using. A sequence $Q = (q_n)_{n = 1}^\infty \in \mathbb{N}_{\ge 2}^\mathbb{N}$ is called a \textbf{basic sequence}. We let $\tilde{Q} = \prod_{n = 1}^\infty[0,q_n-1]$ denote the space of $Q$-Cantor series of the elements of $[0,1)$ under the correspondence given by

\begin{equation}
    x = \sum_{n = 1}^\infty\frac{x_n}{q_1q_2\cdots q_n},
\end{equation}
where we assume without loss of generality that $x_n \neq q_n-1$ infinitely often. 
We may abbreviate this correspondence by writing $x = 0.x_1x_2\cdots x_n\cdots_Q$, and we let $\phi_Q:\tilde{Q}\rightarrow[0,1)$ be the map given by $\phi_Q(x_n)_{n = 1}^\infty = x$. 
We say that $(x_n)_{n = 1}^\infty \in \tilde{Q}$ is \textbf{$Q$-distribution normal} if the sequence $(q_nq_{n-1}\cdots q_1\phi_Q(x_m)_{m = 1}^\infty)_{n = 1}^\infty$ is uniformly distributed mod $1$, and we let $\mathcal{DN}(Q)$ denote the collection of $Q$-distribution normal sequences. 
The sequence $(x_n)_{n = 1}^\infty$ is \textbf{$Q$-normal} if all blocks of digits occur with the expected frequency, which we will now define precisely.
For a block $D = (d_1,d_2,\cdots,d_\ell)$ of potential digits and a $j \in \mathbb{N}$, define

\begin{equation}
    \mathcal{I}_{Q,j}(D) = \begin{cases}
                            1 &\text{if }d_1 < q_j, d_2 < q_{j+1},\cdots,d_\ell < q_{j+\ell-1}\\
                            0 &\text{otherwise}
                           \end{cases}\text{, and }Q_n(D) = \sum_{j = 1}^n\frac{\mathcal{I}_{Q,j}(D)}{q_jq_{j+1}\cdots q_{j+\ell-1}}.
\end{equation}
Intuitively speaking, $\mathcal{I}_{Q,j}(D)$ determines whether or not is is possible to see $D$ as a block of digits starting at position $j$, and $Q_n(D)$ calculates the expected number of occurrences of $D$ in the first $n+\ell-1$ digits in the base $Q$ expansion of a typical $x \in [0,1)$.
Letting $x = \phi_Q(x_n)_{n = 1}^\infty$, we define

\begin{equation}
    N_n^Q(D,x) = \#\{i \le n:\ d_j = x_{i+j},\ \forall\ 1 \le j \le \ell\}.
\end{equation}
The sequence $(x_n)_{n = 1}^\infty$ is \textbf{$Q$-normal} if for all blocks $D$ for which $\displaystyle\lim_{n\rightarrow\infty}Q_n(D) = \infty$ we have

\begin{equation}
    \lim_{n\rightarrow\infty}\frac{N_n^Q(D,x)}{Q_n(D)} = 1,
\end{equation}
and we denote the collection of $Q$-normal sequences by $\mathcal{N}(Q)$. Similarly, $(x_n)_{n = 1}^\infty$ is \textbf{$Q$-ratio normal}  (here we write $x \in \RNQ$)
if for all blocks $D_1$ and $D_2$ of equal length  such that 
\\ $\displaystyle\lim_{n \to \infty} \min(Q_n(D_1),Q_n(D_2)) = \infty$, we have
\begin{equation}\labeq{RNQdef}
\lim_{n \to \infty} \frac {N_n^Q (B_1,x)/Q_n(B_1)} {N_n^Q (B_2,x)/Q_n(B_2)}=1.
\end{equation}

When $Q = (g)_{n = 1}^\infty$, we recover the classical notion of a base-$g$ normal number, in which case $\mathcal{N}(Q) = \RNQ = \mathcal{DN}(Q)$. 
This is just a restatement of \reft{wall}.
However, for a general basic sequence $Q$ we do not need to have $\mathcal{N}(Q) = \mathcal{DN}(Q)$ or $\NQ = \RNQ$. 
Interestingly, if $Q$ is a periodic basic sequence, then we once again have that $\NQ=\RNQ=\DNQ$ as shown by Airey and Mance \cite{NormalEquivalencesForEventuallyPeriodicSequences}.
In particular if $q_n=2$ for $n$ odd and $q_n=3$ for $n$ even, then $\NQ=\RNQ=\DNQ=\mathcal{N}_6$.


A significant amount of the theory of normality in base-$g$ follows from Wall's equivalence and versions of the hotspot lemma.
Moreover, much of the standard theory of normal numbers appears to badly fall apart when $\mathcal{N}(Q) \neq \mathcal{DN}(Q)$ \cite{AireyManceNormalOrders}.

\begin{equation}
    \begin{tikzcd}
 & \mathcal{N}\cap\mathcal{DN} \arrow[dr] \arrow[dl] & &\\%
\mathcal{N} \arrow[dr] &  & \mathcal{DN}\cap\mathcal{RN} \arrow[dr] \arrow[dl] &\\
& \mathcal{RN} & & \mathcal{DN}
\end{tikzcd}
\end{equation}

An important case where this happens is where $q_n \to \infty$ and $\sum \frac{1}{q_n}=\infty$ is discussed in \cite{AireyJacksonManceComplexityCantorSeries,AireyManceHDDifference, AireyManceVandehey, AireyManceNormalPreserves, AireyManceNormalOrders} and other papers.
The above diagram shows the relationships between these notions of normality for this case, where an arrow denotes inclusion. 
All non-empty difference  sets are known to be $D_2(\bP^0_3)$-complete \cite{ AireyJacksonManceComplexityCantorSeries}.
Informally, this may be thought of as these notions having maximal logical independence. See \cite{Kechris} for definitions involving the Borel hierarchy.
Furthermore, all non-empty difference sets are known to have full Hausdorff dimension except for $\NQ \backslash \DNQ$ whose Hausdorff dimension has only been shown to be $1$ for a subset of this class of basic sequences \cite{AireyManceHDDifference}.

 Thus, as part of determining what is the ``right'' generality to extend results about base-$g$ normal numbers, we would like to know for which basic sequences $Q$ we have $\mathcal{DN}(Q) = \mathcal{N}(Q)$, and that $Q$ admits something like a Hot Spot Theorem.
 A natural candidate for consideration is the class of dynamically generated $Q$, which we will define precisely in \refs{MainResults}.

It is possible that there will always be sporadic cases of $Q$ where $\NQ=\DNQ$ that don't appear to fall under any particular umbrella. Currently, we don't have a good understanding of which $Q$ may satisfy this property. Thus, we ask the following question.

\begin{question}\labq{Borel}
Consider the subset of the Baire space
$$
S=\{Q \in \mathbb{N}_{\ge 2}^\mathbb{N} : \NQ=\DNQ\}.
$$
Clearly, $S$ is co-analytic. Is $S$ a Borel set?
\end{question}

If the answer to \refq{Borel} is yes, then it makes sense to search for a general Borel condition on $Q$ that suggests where to extend familiar results on normality. If the answer is no, then it is not clear which basic sequences $Q$ behave similar to a base $g$ expansion when considering normality. Thus, one may only look for more special cases where this happens.

Furthermore, we can also ask similar questions to \refq{Borel} when discussing extensions of the Hot Spot Theorems. There are several different levels of these questions, depending on the strength of the Hot Spot Theorem in question. 


\subsection{Dynamically generated basic sequences}\labs{DefinitionOfDGBSSubsection}

We must define a few notions to state our main results. Examples demonstrating our main theorems will be given in \refs{Examples}. Moreover, we will provide counterexamples in \refs{Counterexamples} that demonstrate the necessity of many of our conditions.

Every notion introduced in this section will be explored in greater depth later in the paper. We first introduce the idea of a dynamically generated basic sequence. We also remark that a similar notion of random Cantor series was studied by Kifer \cite{Kifer} who studied fractal properties relating to randomly generated basic sequences $Q$. In the case of a randomly generated basic sequence, one may prove results about typical $Q$, but we wish to provide results for specific $Q$. 

We will always use $X$ to denote a complete separable metric space, $\mathscr{B}$ the Borel $\sigma$-algebra on $X$, $\mu$ a probability measure on $(X,\mathscr{B})$ that assigns positive measure to all non-empty open sets, and $T:X\rightarrow X$ a continuous map that preserves $\mu$. We call $\mathcal{X} := (X,\mathscr{B},\mu,T)$ a \textbf{continuous measure preserving system (c.m.p.s.)}. More generally, we let $(Y,\mathscr{A},\nu)$ denote a standard probability space and $S:Y\rightarrow Y$ a measurable map that preserves $\nu$, and we call $\mathcal{Y} := (Y,\mathscr{A},\nu,S)$ a \textbf{measure preserving system (m.p.s.)}. If $f:Y\rightarrow\mathbb{N}_{\ge 2}$ is measurable, then $y \in Y$ is a \textbf{generic point for $f$} if for all $w = (w_1,w_2,\cdots,w_\ell) \in \mathbb{N}_{\ge 2}^\ell$ and

\begin{equation}
    E_w := \bigcap_{i = 1}^\ell T^{-i}f^{-1}(w_i)\text{, we have }\lim_{N\rightarrow\infty}\frac{1}{N}\sum_{n = 1}^N\mathbbm{1}_{E_w}(T^ny) = \mu(E_w).
\end{equation}
We will often consider a block of bases $B = (b_1,\cdots,b_\ell) \in \mathbb{N}_{\ge 2}^\ell$, so we may also use the notation $E_B$ instead of $E_w$ in such situations. 
A point $x \in X$ is a \textbf{generic point} if it is a generic point for all $f \in C(X)\cap L^1(X,\mu)$, i.e., if for all such $f$ (taking values in $\mathbb{C}$, not just $\mathbb{N}_{\ge 2}$) we have

\begin{equation}
    \lim_{N\rightarrow\infty}\frac{1}{N}\sum_{n = 1}^Nf(T^nx) = \int_Xfd\mu.
\end{equation}
Given a m.p.s. $(Y,\mathscr{A},\nu,S)$ and some measurable $f:Y\rightarrow \mathbb{N}_{\ge 2}$, the $\sigma$-algebra generated by $f$ is the smallest $S$-invariant $\sigma$-algebra $\mathscr{A}_f \subseteq \mathscr{A}$ with respect to which $f$ is measurable.
We will also be assuming throughout this paper that if $E_w \neq \emptyset$, then $\nu(E_w) > 0$.
If $\mathcal{X}$ is a c.m.p.s. and $f \in C(X)$, then the topology generated by $f$ is the smallest $T$-invariant collection of open sets with respect to which $f$ is still continuous.
A basic sequence $Q = (q_n)_{n = 1}^\infty$ is \textbf{dynamically generated} if there exists a c.m.p.s. $(X,\mathscr{B},\mu,T)$, a continuous function $f:X\rightarrow\mathbb{N}_{\ge 2}$ that generates the topology of $X$, and a generic point $x \in X$ for which $q_n = f\left(T^nx\right)$, and we say that $Q$ is generated by $(X,\mathscr{B},\mu,T,f,x)$.

In Lemma \ref{WhenIsASequenceDynamicallyGenerated} we will give an alternative characterization of which basic sequences $Q$ are dynamically generated.
We see that $(q_1,\cdots,q_\ell) = (w_1,\cdots,w_\ell)$ if and only if $x \in E_w$.
It is worth observing that for $g \in \mathbb{N}_{\ge 2}$, the classical theory of base $g$ normality corresponds to the theory of the basic sequence $Q$ that is dynamically generated by the trivial one point system and the function $f \equiv g$.

We will now discuss two natural strengthenings of the notions of $Q$-normality and $Q$-distribution normality for dynamically generated $Q$. 
Let us fix a block $D = (d_1,\cdots,d_\ell)$ of potential digits and a block $B = (b_1,\cdots,b_\ell)$ of potential bases. Let $S_B = \{j \in \mathbb{N}\ |\ (q_j,q_{j+1},\cdots,q_{j+\ell-1}) = (b_1,b_2,\cdots,b_\ell)\}$, and define

\begin{alignat*}{2}
    &Q_n(D,B) = \sum_{j \in S_B\cap[1,n]}\frac{\mathcal{I}_{Q,j}(D)}{b_1b_2\cdots b_K}\text{, and for }z = 0.z_1z_2\cdots z_n\cdots_Q\text{ define}\\
    &N_n^Q(D,B,z) = \#\{j \in S_B\cap[1,n]:\ d_i = z_{j+i},\ \forall\ 0 \le i < k\}.
\end{alignat*}
Intuitively speaking, $Q_n(D,B)$ is the expected number of occurrences of the block of digits $D$ paired with the block of bases $B$ in the first $n+K-1$ digits of the base $Q$ expansion of a generic $x \in [0,1)$.
Similarly, $N_n^Q(D,B,z)$ counts the number of occurrences of the block of digits $D$ paired with the block of bases $B$ in the base $Q$ expansion of $z$.
We say that $z \in [0,1)$ is \textbf{uniformly $Q$-normal} if for all blocks $D$ and $B$ with $\displaystyle\lim_{n\rightarrow\infty}Q_n(D,B) = \infty$, we have

\begin{equation}
    \lim_{n\rightarrow\infty}\frac{N_n^Q(D,B,z)}{Q_n(D,B)} = 1,
\end{equation}
and we denote the collection of such sequences by $\mathcal{UN}(Q)$.
For $D = (d_1,\cdots,d_\ell)$ and $B = (b_1,\cdots,b_\ell)$, we write $D < B$ if $d_i < b_i$ for all $1 \le i \le \ell$.
It is worth noting that when $Q$ is dynamically generated, if $D$ and $B$ are blocks for which $D < B$ and $B$ appears at least once in $Q$, then $\displaystyle\lim_{n\rightarrow\infty}Q_n(D,B) = \infty$.
Now suppose that $Q$ is generated by $(X,\mathscr{B},\mu,T,f,x)$. 

A sequence $(x_n)_{n = 1}^\infty \subseteq X\times[0,1)$ is \textbf{uniformly distributed (with respect to $\mu$)}\footnote{We remark that our definition of uniform distribution is motivated by that of \cite[Definition 3.1.1]{Kuipers&Niederreiter}.}  if for any $F \in C(X\times[0,1))\cap L^1(X\times[0,1),\mu\times m)$ we have

\begin{equation}
    \lim_{N\rightarrow\infty}\frac{1}{N}\sum_{n = 1}^NF(x_n) = \int_{X\times[0,1)}Fd\mu\times m.
\end{equation}
We say that $z \in [0,1)$ is \textbf{uniformly $Q$-distribution normal} if $(x,z)$ has a uniformly distributed orbit in the skew product system $\mathcal{X}^f := (X\times[0,1), \mathscr{B}\times\mathscr{L},\mu\times m,T\rtimes M)$, where 
$M_nz = nz\pmod{1}$, $T\rtimes M(x,z) = (Tx,M_{f(x)}z)$, $\mathscr{L}$ is the Lebesgue $\sigma$-algebra on $[0,1)$ and $m$ is the Lebesgue measure.
We denote the collection of such sequences by $\mathcal{UDN}(Q)$. 
It is worth noting that a priori, the definition of $\mathcal{UDN}(Q)$ depends on the generating tuple $(X,\mathscr{B},\mu,T,f,x)$. 
However, we will prove in Theorem \ref{UN=UDN} that $\mathcal{UN}(Q) = \mathcal{UDN}(Q)$, so a posteriori we see that the definition of $\mathcal{UDN}(Q)$ is independent of the generating tuple.

Here, we will see that if $Q$ is dynamically generated, then there is at least some connection between blocks of digits and distributional properties.

\begin{theorem}\label{UN=UDN}
If $Q = (q_n)_{n = 1}^\infty$ is a dynamically generated basic sequence, then $\mathcal{UN}(Q) = \mathcal{UDN}(Q)$.
\end{theorem}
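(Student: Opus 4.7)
The plan is to build a dictionary identifying visits of the skew-product orbit $((T \rtimes M)^n(x,z))_{n \ge 0}$ to explicit cylinder sets in $X \times [0,1)$ with occurrences of digit/base block pairs in the $Q$-Cantor series expansion, and then pass between indicator and continuous test functions via sandwich arguments. A direct computation gives
\begin{equation*}
(T \rtimes M)^n(x,z) = \bigl(T^n x,\ f(x) f(Tx) \cdots f(T^{n-1}x) \cdot z \bmod 1\bigr).
\end{equation*}
Since $q_m = f(T^m x)$ and $z = \sum_m z_m/(q_1 \cdots q_m)$, multiplying by $q_1 \cdots q_n$ and reducing mod $1$ shifts the $Q$-Cantor series by $n$ places. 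Hence the orbit visits $E_B \times I_D^B$ at time $n$ (where $I_D^B := \left[\sum_{i=1}^\ell d_i/(b_1 \cdots b_i),\ \sum_{i=1}^\ell d_i/(b_1 \cdots b_i) + 1/(b_1 \cdots b_\ell)\right)$ is the $Q$-Cantor cylinder interval) precisely when the block of bases at the corresponding position is $B$ and the digit block there is $D$. Because $f$ is continuous into the discrete space $\mathbb{N}_{\ge 2}$ and $T$ is continuous, each $f^{-1}(b)$ and hence each $E_B$ is clopen, so $\mathbbm{1}_{E_B} \in C(X)$; only $\mathbbm{1}_{I_D^B}$ requires approximation by continuous functions.

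For the direction $\mathcal{UDN}(Q) \subseteq \mathcal{UN}(Q)$, I fix blocks $D, B$ of length $\ell$ with $Q_n(D,B) \to \infty$ (which forces $\mu(E_B) > 0$ and $D < B$) and sandwich $\mathbbm{1}_{E_B \times I_D^B}$ between continuous $F^\pm \in C(X \times [0,1)) \cap L^1$ whose $(\mu \times m)$-integrals differ by $\varepsilon$. Applying $\mathcal{UDN}$ to $F^\pm$ and squeezing yields $N_N^Q(D,B,z)/N \to \mu(E_B)/(b_1 \cdots b_\ell)$. Applying the genericity of $x$ to the continuous function $\mathbbm{1}_{E_B}$ gives $|S_B \cap [1,N]|/N \to \mu(E_B)$, so $Q_N(D,B)/N$ has the same limit, and the ratio $N_N^Q(D,B,z)/Q_N(D,B)$ tends to $1$.

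For the reverse direction, I approximate an arbitrary $F \in C(X \times [0,1)) \cap L^1$ by finite linear combinations of cylinder indicators $\mathbbm{1}_{E_B} \otimes \mathbbm{1}_{I_D^B}$. Since $f$ generates the topology of $X$, the clopen partition $\{E_B : |B|=\ell\}$ refines to a neighborhood basis as $\ell \to \infty$; approximating $F$ by $\sum_B \mathbbm{1}_{E_B}(y) F(y_B, w)$ for representatives $y_B \in E_B$, and then approximating each continuous $w \mapsto F(y_B, w)$ by a step function constant on each interval $I_D^B$ (of length at most $2^{-\ell}$), produces the desired approximant. Applying $\mathcal{UN}$ to each cylinder indicator, together with the normalization $Q_N(D,B)/N \to \mu(E_B)/(b_1 \cdots b_\ell)$ established via genericity of $x$, delivers orbit averages converging to $\mu(E_B)/(b_1 \cdots b_\ell) = \int \mathbbm{1}_{E_B \times I_D^B}\,d(\mu \times m)$, and hence to $\int F\,d(\mu \times m)$ after passing to the limit in the approximation.

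The main obstacle is ensuring the approximation arguments survive when $X$ is only Polish rather than compact: uniform approximation of an arbitrary continuous $F$ can then fail globally, so one must combine uniform approximation on a large compact set $K \subseteq X$ with $\mu(K) > 1 - \varepsilon$ together with an $L^1$-tail estimate and the orbit-density of $(T^n x)$. A related subtlety is that cylinders with $D \not< B$ must be handled, but they contribute $0$ to both sides since $y \in E_B$ forces the relevant bases to be $B$, so only digits with $D < B$ can actually be realized.
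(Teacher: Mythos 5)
Your proposal is correct and follows essentially the same route as the paper's proof: the inclusion $\mathcal{UDN}(Q)\subseteq\mathcal{UN}(Q)$ via sandwiching the indicator of the cylinder set $E_{D,B}$ between continuous functions, and the reverse inclusion by approximating a general $F\in C(X\times[0,1))\cap L^1$ by finite linear combinations of cylinder indicators $\mathbbm{1}_{E_B}\otimes\mathbbm{1}_{I_{D,B}}$, using that the $E_B$ are clopen and the intervals $I_{D,B}$ have length at most $2^{-\ell}$. The only differences are cosmetic (an off-by-one in the time-indexing of $E_{D,B}$ versus $T(E_B)\times I_{D,B}$, and your more explicit treatment of the non-compactness of $X$, which the paper handles via a finite clopen subcover).
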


\subsection{Main Results}\labs{MainResults}

Returning to our original goal of finding $Q$ for which $\mathcal{N}(Q) = \mathcal{DN}(Q)$, we arrive at our first main theorem.

\begin{theorem}\label{MainCorollary}
    If $Q$ is a basic sequence generated by $(X,\mathscr{B},\mu,T,f,x)$ with $(X,\mathscr{B},\allowbreak\mu,\allowbreak T)$ ergodic and having zero entropy, and $\int_X\log(f)d\mu < \infty$, then $\mathcal{N}(Q) = \mathcal{UN}(Q) = \mathcal{UDN}(Q) = \mathcal{DN}(Q)$.
\end{theorem}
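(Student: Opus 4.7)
The theorem asserts the four-way equality $\mathcal{N}(Q)=\mathcal{UN}(Q)=\mathcal{UDN}(Q)=\mathcal{DN}(Q)$. Theorem~\ref{UN=UDN} provides the middle equality $\mathcal{UN}(Q)=\mathcal{UDN}(Q)$, and the two inclusions $\mathcal{UN}(Q)\subseteq\mathcal{N}(Q)$ and $\mathcal{UDN}(Q)\subseteq\mathcal{DN}(Q)$ are essentially definitional: the first by summing the joint counts $N_n^Q(D,B,z)$ and $Q_n(D,B)$ over all base blocks $B$ of length $\ell$ (which recovers the marginal $N_n^Q(D,z)$ and $Q_n(D)$), and the second by projecting the joint uniform distribution statement in $X\times[0,1)$ onto the $[0,1)$-factor. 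The bulk of the proof therefore lies in the two \emph{uniformization upgrades} $\mathcal{DN}(Q)\subseteq\mathcal{UDN}(Q)$ and $\mathcal{N}(Q)\subseteq\mathcal{UN}(Q)$.

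I would attack $\mathcal{DN}(Q)\subseteq\mathcal{UDN}(Q)$ first, as it admits a clean Fourier-analytic reduction. Fix $z\in\mathcal{DN}(Q)$; we must show that $(x,z)$ has a uniformly distributed orbit under $T\rtimes M$ with respect to $\mu\times m$. By Stone--Weierstrass the tensors $F(y,w)=g(y)e^{2\pi ikw}$ with $g\in C(X)$ and $k\in\mathbb{Z}$ are dense in $C(X\times[0,1))$, so by Weyl's criterion it suffices to establish
\begin{equation}
\frac{1}{N}\sum_{n=1}^{N}g(T^nx)\,e^{2\pi ik\,q_nq_{n-1}\cdots q_1 z}\longrightarrow\mathbbm{1}_{\{k=0\}}\int_X g\,d\mu.
\end{equation}
The case $k=0$ is the genericity of $x$ for $(X,\mathscr{B},\mu,T)$. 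For $k\neq 0$, the condition $z\in\mathcal{DN}(Q)$ is precisely the vanishing of the Cesàro averages of $w_n:=e^{2\pi ik\,q_n\cdots q_1 z}$. The heart of the argument is then upgrading this to the vanishing of the \emph{twisted} average against $(g(T^n x))$. This is where zero entropy enters decisively: since $(X,\mathscr{B},\mu,T)$ is ergodic with zero entropy and $x$ is generic, the sample $(g(T^n x))$ is a completely deterministic sequence in Kamae's sense, and a classical Kamae-type disjointness theorem forces any Cesàro-mean-zero bounded sequence to remain Cesàro-mean-zero after multiplication by a completely deterministic sequence. Applied to $w_n$, this delivers the desired joint limit and hence $z\in\mathcal{UDN}(Q)$.

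The containment $\mathcal{N}(Q)\subseteq\mathcal{UN}(Q)$ admits a parallel combinatorial treatment. For $z\in\mathcal{N}(Q)$, fix blocks $D,B$ of length $\ell$ with $D<B$. The ratio $N_n^Q(D,B,z)/Q_n(D,B)$ can be expressed as a Birkhoff-type average, over the skew product $T\rtimes M$, of the characteristic function of a product cylinder $E_B\times\{w:\text{digits of }w\text{ begin with }D\}$. The same deterministic-versus-equidistributed disjointness principle, now applied to block indicators rather than additive characters, converts the marginal digit-block statement $N_n^Q(D,z)/Q_n(D)\to 1$ into the joint statement $N_n^Q(D,B,z)/Q_n(D,B)\to 1$; the key point is once again that the base-block statistics come from the deterministic factor $T$ and so factor out of the joint average.

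The principal obstacle is formalizing the Kamae-style disjointness step in this skew-product setting. The joint system $T\rtimes M$ itself is \emph{not} zero entropy---by Abramov's formula, $h(T\rtimes M)=\int\log f\,d\mu$---so the disjointness theorem cannot be applied to the skew product directly but only to the base factor $T$, while one carefully tracks how the joint orbit refines over its fibers. The hypothesis $\int\log f\,d\mu<\infty$ plays a double role here: it makes $\mathcal{X}^f$ a genuine finite-entropy c.m.p.s. on which generic points and $L^1$ machinery behave as expected, and it supplies the summability needed to exchange limits with the sums over base blocks used in the easy marginalization inclusions.
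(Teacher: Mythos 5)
Your structural outline agrees with the paper's: the middle equality is Theorem~\ref{UN=UDN}, the inclusions $\mathcal{UN}(Q)\subseteq\mathcal{N}(Q)$ and $\mathcal{UDN}(Q)\subseteq\mathcal{DN}(Q)$ are routine, and the real content is the two upgrades $\mathcal{DN}(Q)\subseteq\mathcal{UDN}(Q)$ and $\mathcal{N}(Q)\subseteq\mathcal{UN}(Q)$. However, the mechanism you propose for both upgrades rests on a false principle. There is no ``classical Kamae-type disjointness theorem'' asserting that a bounded Ces\`aro-mean-zero sequence remains Ces\`aro-mean-zero after multiplication by a completely deterministic sequence: take $u_n=(-1)^n$ (completely deterministic, generated by rotation on two points) and $w_n=(-1)^n$ (bounded, mean zero); then $\frac{1}{N}\sum_{n\le N}u_nw_n\to 1$. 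The genuine Kamae--Weiss disjointness requires the second sequence to be \emph{normal} (generic for a Bernoulli measure), i.e., to have all higher-order correlations vanish --- which is essentially the conclusion you are trying to reach, not a consequence of the single hypothesis $z\in\mathcal{DN}(Q)$, which only says that $(q_n\cdots q_1z)_n$ is uniformly distributed mod $1$. The same objection applies verbatim to your treatment of $\mathcal{N}(Q)\subseteq\mathcal{UN}(Q)$: uniform distribution of one coordinate plus determinism of the other does not in general yield joint equidistribution (e.g., $v_n=n\alpha \bmod 1$ and $u_n=\lfloor 2v_n\rfloor$ are respectively equidistributed and deterministic but maximally correlated).

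What the paper actually does is quantitative and exploits the specific fiber structure of the skew product, not a soft disjointness. For $\mathcal{DN}(Q)\subseteq\mathcal{UDN}(Q)$ (Theorem~\ref{DN(Q)ImpliesUDN(Q)}), zero entropy enters through the Shannon--McMillan--Breiman theorem: all but $\delta$ of the space is covered by at most $e^{L_1\delta}$ base-names of length $L_1$. Over each such name the fiber dynamics is an expanding map of large degree, and Lemma~\ref{LemmaUsingMcdiarmid} (McDiarmid's inequality) shows that the set of fiber points whose block averages deviate is a union of intervals of total measure exponentially small in $L_1$. Summing over the few typical names gives a small exceptional set that is a \emph{finite union of intervals}, and it is exactly here that the hypothesis $z\in\mathcal{DN}(Q)$ is usable: it controls the frequency of visits of $(q_n\cdots q_1z)$ to such sets. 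The inclusion $\mathcal{N}(Q)\subseteq\mathcal{UN}(Q)$ is then obtained not by a separate combinatorial argument but as an application of the generalized Hot Spot Theorem~\ref{StrongestHotSpotForDDGBasicSequences}(ii) (with $C=1$ and $\mathcal{N}_\sigma=\emptyset$), whose proof uses the same SMB--McDiarmid covering together with ergodicity and $\int_X\log f\,d\mu<\infty$ to control the lengths of the intervals $I_{D,B}$. Your proposal would need to replace the disjointness step with an argument of this quantitative type; as written, the central step does not go through.
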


Since periodic basic sequences are generated by rotation on a finite set, we see that Theorem \ref{MainCorollary} gives a much larger class of $Q$ for which $\mathcal{N}(Q) = \mathcal{DN}(Q)$ than the periodic $Q$ given by \cite{PeriodicBasicSequences}.

Furthermore, in \refs{RatioNormality} we will connect ratio normality and a new notion of uniform ratio normality (denoted $\mathcal{URN}(Q)$) to arrive at the following diagram describing the implications for general dynamically generated basic sequences $Q$.

\begin{equation}
    \begin{tikzcd}
\mathcal{UN}  \arrow[r, leftrightarrow] \arrow[d] & \mathcal{URN} \arrow[r, leftrightarrow] \arrow[d] & \mathcal{UDN} \arrow[d]\\%
\mathcal{N} & \mathcal{RN} \arrow[l, leftrightarrow] & \mathcal{DN}
\end{tikzcd}
\end{equation}

For $g \in \mathbb{N}_{\ge 2}$ we let  $M_g:[0,1)\rightarrow[0,1)$ is given by $M_g(y) = gy\pmod{1}$. 
A sequence $(q_n)_{n = 1}^\infty \subseteq \mathbb{N}_{\ge 2}^\mathbb{N}$ is a \textbf{$g$-power sequence} if $q_n = g^{a_n}$ with $a_n \in \mathbb{N}$. Lafer \cite{Lafer} first considered $g$-power sequences in his Ph.D. dissertation. 
We will explore $g$-power sequences in greater detail in \refs{gpower} where we will prove the following result.

\begin{theorem}\label{MainTheoremForPowersOf2}
    Let $g \in \mathbb{N}_{\ge 2}$ and let $Q = (q_n)_{n = 1}^\infty \in \left(\{g^n\}_{n = 1}^\infty\right)^{\mathbb{N}}$ be a basic sequence generated by $(X,\mathscr{B},\mu,T,f,x)$ with $(X,\mathscr{B},\mu,T)$ ergodic and having zero entropy, and $\int_X\log(f)d\mu < \infty$.
    We have $\mathcal{N}(Q) = \mathcal{DN}(Q) = \mathcal{UN}(Q) = \mathcal{UDN}(Q) = \mathcal{N}_g$.
\end{theorem}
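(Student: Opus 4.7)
The plan is to combine Theorem~\ref{MainCorollary} with two tailored inclusions. Under the stated hypotheses, Theorem~\ref{MainCorollary} yields the chain $\NQ=\UNQ=\UDNQ=\DNQ$, so it suffices to show that this common set equals $\mathcal{N}_g$. I will do so via the inclusions $\mathcal{N}_g\subseteq\UDNQ$ and $\UNQ\subseteq\mathcal{N}_g$.

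For $\mathcal{N}_g\subseteq\UDNQ$, I build the suspension (Kakutani tower) $(Y,\nu,S)$ of $(X,\mathscr{B},\mu,T)$ with ceiling function $a:=\log_g f$. Since $f$ is continuous into the discrete set $\mathbb{N}_{\ge 2}$ it is locally constant, making $a$ locally constant, $Y$ a c.m.p.s., and $X\times\{0\}\subseteq Y$ clopen. Abramov's formula gives $h(S)=h(T)/\alpha=0$, where $\alpha:=\int\log_g f\,d\mu<\infty$. Since $(M_g,m)$ is Bernoulli (hence Kolmogorov) and $(Y,\nu,S)$ is zero-entropy, Furstenberg's disjointness theorem shows that they are disjoint, so every weak-$*$ limit of the empirical measures of $((S^m(x,0),M_g^m z))_m$ must equal $\nu\times m$. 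As $x$ is $\mu$-generic the base point $(x,0)$ is $\nu$-generic, and base-$g$ normality of $z$ makes $z$ an $M_g$-generic point for $m$; thus the joint orbit equidistributes in $Y\times[0,1)$ with respect to $\nu\times m$. Extracting the subsequence of return times $m=b_n:=\sum_{k=0}^{n-1}a(T^k x)$ to $X\times\{0\}$ (which has density $1/\alpha$ in $m$) yields
\begin{equation*}
\frac{1}{N}\sum_{n=1}^N F(T^n x,M_g^{b_n} z)\longrightarrow\int_{X\times[0,1)} F\,d(\mu\times m)
\end{equation*}
for every bounded continuous $F$, which is precisely $z\in\UDNQ$.

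For $\UNQ\subseteq\mathcal{N}_g$ I argue combinatorially. Fix a base-$g$ block $w=w_1\cdots w_k$; the base-$g$ expansion of $z$ is obtained by concatenating the padded base-$g$ representations of the $Q$-digits $(x_n)_{n\ge 1}$, each of length $a_n$. Every base-$g$ position $p$ decomposes uniquely as $p=S_{n-1}+r$ with $1\le r\le a_n$, and the base-$g$ block of length $k$ starting at $p$ is determined by $(x_n,\ldots,x_{n+\ell-1})$ and the block of bases $B=(g^{a_n},\ldots,g^{a_{n+\ell-1}})$, where $\ell=\ell(n,r)$ is the minimal length with $a_n+\cdots+a_{n+\ell-1}\ge r+k-1$. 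For any such $B$ with $a_n\ge r$, the number of digit blocks $D$ compatible with $B$ that produce $w$ at offset $r$ equals $g^{(a_n+\cdots+a_{n+\ell-1})-k}$ out of $g^{a_n+\cdots+a_{n+\ell-1}}$ possibilities (the remaining base-$g$ digits are free), so the conditional fraction is exactly $g^{-k}$. Applying uniform $Q$-normality block-by-block and partitioning over admissible base configurations (noting $\sum_B\mu(E_B)=\mu(\{a\ge r\})$ for fixed $r$), the density in $n$ of the event ``$a_n\ge r$ and $w$ occurs at offset $r$'' is $g^{-k}\mu(\{a\ge r\})$. Summing over $r\ge 1$, the density (per $Q$-index $n$) of occurrences of $w$ is $g^{-k}\sum_{r\ge 1}\mu(\{a\ge r\})=g^{-k}\alpha$; hence the count of base-$g$ occurrences of $w$ at positions $p\le S_N$ is asymptotically $Ng^{-k}\alpha\sim S_N g^{-k}$, yielding base-$g$ frequency $g^{-k}$ and $z\in\mathcal{N}_g$.

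The main obstacle is the combinatorial step: the sum over $r$ runs over infinitely many scales, and interchanging it with the Cesaro limit requires a truncation argument, controlling the tail $\sum_{r>R}\mu(\{a\ge r\})$ via the hypothesis $\int\log f\,d\mu<\infty$. The disjointness step becomes routine once the suspension and clopen structure of $X\times\{0\}$ are in place, aside from standard care with Abramov's formula and with passing from disjointness to joint genericity.
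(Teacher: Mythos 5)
Your proposal is correct and, for most of its length, runs parallel to the paper: the reduction via Theorem~\ref{MainCorollary} and the inclusion $\mathcal{N}_g\subseteq\DNQ$ via the suspension of $(X,\mathscr{B},\mu,T)$ under $\log_g f$, Abramov's formula, and Furstenberg disjointness of the zero-entropy suspension from the Bernoulli map $M_g$ is exactly the argument in the paper's proof (which phrases the conclusion as $\mathcal{N}_g\subseteq\DNQ$ rather than $\subseteq\UDNQ$, an immaterial difference once the four sets are known to coincide). Where you genuinely diverge is the reverse inclusion: the paper proves the stronger statement $\DNQ\subseteq\mathcal{N}_g$ (Lemma~\ref{DistributionNormalImpliesgNormal}) by feeding the return-time set $A_k$ of Lemma~\ref{DensityCalculationLemma} into the Bergelson--Vandehey Hot Spot Theorem~\ref{StrongestHotSpot}, whereas you prove only $\UNQ\subseteq\mathcal{N}_g$ by directly counting occurrences of a base-$g$ word across the concatenated padded blocks. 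Your inclusion is formally weaker but suffices here precisely because Theorem~\ref{MainCorollary} has already identified $\UNQ$ with $\DNQ$; the trade-off is that the paper's version is reusable as a standalone fact (it is invoked again in Remark~\ref{RemarkOnImprovingTheConstruction}), while yours avoids the Hot Spot machinery entirely at the cost of two limit interchanges (over the offset $r$ and over the infinitely many base blocks $B$ at each fixed $r$). Both interchanges are genuinely needed and both are controllable: the $r$-tail by $\frac{1}{N}\sum_{n\le N}\max(a_n-R,0)\to\int_{\log_g f>R}(\log_g f-R)\,d\mu$, which uses that $x$ is generic for the unbounded function $\log_g f\in C(X)\cap L^1(X,\mu)$, and the $B$-tail by the fact that for fixed $r$ the relevant cylinders $E_B$ (with $\ell\le k$, since every $a_i\ge1$) partition $\{f\circ T\ge g^r\}$ so that the sum of the limiting densities equals the limit of the total count, forcing term-by-term convergence to be summable. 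Since you flag both issues and they do close, the proof is sound.
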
 


We now turn to the Hot Spot Theorems.
The classic Hot Spot Theorem was improved by Postnikov \cite{AStrongerHotSpot}, Bailey and Misiurewicz \cite{AStrongHotSpot}, once again by Pyatetskii-Shapiro \cite{StrongestHotSpotTheorem}, and then by Bergelson and Vandehey \cite{HotSpotProofOfGeneralizedWallTheorem}.
See Moshchevitin and Shkredov \cite{MoshchevitinShkredov} as well as Airey and Mance \cite{MoshchevitinShkredovCorrection} for Hot Spot Theorems for other numeration systems.
In order to prove Theorem \ref{MainCorollary}, we will need a variant of the strongest Hot Spot Theorem, so we state it below. 

\begin{theorem}[{Bergelson-Vandehey \cite[Theorem 6]{HotSpotProofOfGeneralizedWallTheorem}}]\label{StrongestHotSpot}
    Let $y \in [0,1)$ have base-$g$
expansion $0.y_1y_2y_3\cdots$. Suppose that for any $\sigma > 0$, there exists arbitrarily large $k$, subsets $\mathcal{N}_{\sigma,k} \subseteq \mathbb{N}$ of natural upper density at most $\sigma$, and a constant $C > 0$ (not dependent on $\sigma$ or $k$) such that the following holds: for every word $s = [d_1,d_2,\cdots,d_k]$ of length $k$ we have

\begin{equation}
    \limsup_{N\rightarrow\infty}\frac{\nu_s\left(y,N;\mathcal{N}_{\sigma,k}\right)}{N} \le \frac{Cb^{\sigma k}}{g^k},
\end{equation}
where $\nu_s(y,N;\mathcal{N}_{\sigma,k}) = \#\{i \in [1,N]\setminus\mathcal{N}_{\sigma,k}\ :\ y_{i+j} = d_j,\ \forall\ 1 \le j \le k\}$. Then $y$ is normal in base-$g$.
\end{theorem}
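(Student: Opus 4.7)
The plan is to reduce the theorem to the classical Hot Spot Theorem (\reft{HotSpot2}) by an extension argument that trades block length against the density-$\sigma$ exceptional set. By \reft{wall} (Wall's Theorem), normality base $g$ is equivalent to uniform distribution mod $1$ of $(g^n y)_{n = 1}^\infty$, which in turn amounts to showing that the visit frequency $\nu_s(y, N)/N$ of each length-$|s|$ cylinder tends to $g^{-|s|}$.

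Fix a word $s$ of length $k_0$. For any $\sigma > 0$, the hypothesis provides arbitrarily large valid values of $K$; pick one with $K \geq k_0$ and its associated exceptional set $\mathcal{N}_{\sigma, K}$. Each occurrence of $s$ at a position $i \in [1, N] \setminus \mathcal{N}_{\sigma, K}$ is the prefix of an occurrence at position $i$ of exactly one of the $g^{K - k_0}$ length-$K$ words $t$ extending $s$, so
\begin{equation*}
\nu_s(y, N; \mathcal{N}_{\sigma, K}) \;=\; \sum_{t \text{ extends } s} \nu_t(y, N; \mathcal{N}_{\sigma, K}).
\end{equation*}
Summing the hypothesized bound across the $g^{K - k_0}$ extensions yields $\limsup_N \nu_s(y, N; \mathcal{N}_{\sigma, K})/N \leq Cb^{\sigma K}/g^{k_0}$, and including the positions in $\mathcal{N}_{\sigma, K}$ (of upper density at most $\sigma$) gives
\begin{equation*}
\limsup_{N \to \infty} \frac{\nu_s(y, N)}{N} \;\leq\; \frac{Cb^{\sigma K}}{g^{k_0}} + \sigma.
\end{equation*}

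Next I would take $\sigma_n \to 0$ together with corresponding valid $K_n \geq k_0$ chosen so that the product $\sigma_n K_n$ remains bounded (say by $M$). Passing to the limit then gives $\limsup_N \nu_s(y, N)/N \leq C b^M / g^{k_0}$ for every length-$k_0$ word $s$, with the constant $Cb^M$ independent of $k_0$. This is exactly the hypothesis of the classical Hot Spot Theorem \reft{HotSpot2}, which then yields normality of $y$ in base $g$.

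The main obstacle is justifying the joint choice of $(\sigma_n, K_n)$ with $\sigma_n K_n$ bounded: the hypothesis supplies infinitely many valid $K$'s for each $\sigma$, but in principle those $K$'s could be forced to grow faster than $1/\sigma$, making $b^{\sigma K}$ blow up. Overcoming this --- for instance by exploiting monotonicity of the valid-$K$ sets across different scales $\sigma$, or by arguing directly at the level of exponential sums via Weyl's criterion without passing through \reft{HotSpot2} --- is the technical heart of the argument. All the remaining steps (the extension counting, the passage from cylinder frequencies to uniform distribution, and the invocation of \reft{HotSpot2}) are routine.
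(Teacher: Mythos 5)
This statement is quoted from Bergelson--Vandehey and is not proved in the paper, so your argument can only be judged on its own terms; on those terms it contains a genuine gap, and it is exactly the one you flag yourself. The reduction is sound up to the inequality $\limsup_N \nu_s(y,N)/N \le C g^{\sigma K}/g^{k_0}+\sigma$ (here $b=g$, a notational remnant of the source), but the concluding step needs admissible pairs $(\sigma_n,K_n)$ with $\sigma_n\to0$ and $\sigma_nK_n$ bounded, and the hypothesis does not supply them: for each $\sigma$ it guarantees only an infinite set $V_\sigma$ of admissible lengths, and nothing prevents $\min V_\sigma$ from growing superlinearly in $1/\sigma$. The monotonicity $V_\sigma\subseteq V_{\sigma'}$ for $\sigma\le\sigma'$ does hold (an admissible pair for $\sigma$ is admissible for any larger $\sigma'$), but it gives no control on $\min V_\sigma$. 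The failure is structural rather than technical. Your counting step alone cannot exclude the scenario in which, say, $\max_{|s|=k}\limsup_N\nu_s(y,N)/N=e^{\sqrt{k}}g^{-k}$: such a $y$ would satisfy the present hypothesis for every $\sigma>0$ with $\mathcal{N}_{\sigma,k}=\emptyset$ and all $k$ of order at least $\sigma^{-2}$ admissible, so that $\sigma K\to\infty$ for every admissible choice and your bound degenerates, while no single constant $C$ is valid for all lengths, so the hypothesis of \reft{HotSpot2} is unavailable. Such a $y$ cannot exist if the theorem is true, but excluding it is precisely the content of the theorem, so the reduction to a fixed-constant hot spot theorem is circular.

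The argument has to be run on the full-length-$k$ words directly; the whole point of the exponentially growing allowance $Cg^{\sigma k}$ is that it is harmless there. One correct route: let $\mu$ be any weak-$*$ limit point of $\frac{1}{N}\sum_{n\le N}\delta_{\{g^ny\}}$, which is invariant for $x\mapsto gx\bmod 1$. Fix $\sigma$ and an admissible $k$, and pass to a further subsequence along which every $\nu_s(y,N_j;\mathcal{N}_{\sigma,k})/N_j$ converges; this produces numbers $p'_s\le\min\bigl(\mu(I_s),\,Cg^{\sigma k}g^{-k}\bigr)$ with $\sum_{|s|=k}p'_s\ge1-\sigma$, where $I_s$ is the level-$k$ $g$-adic interval determined by $s$. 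Concavity of entropy then yields
\begin{equation*}
H_k(\mu)\ \ge\ (1-\sigma)\bigl((1-\sigma)k\log g-\log C+\log(1-\sigma)\bigr),
\end{equation*}
and since this holds for arbitrarily large $k$ while $h(\mu)=\lim_kH_k(\mu)/k$ exists, one gets $h(\mu)\ge(1-\sigma)^2\log g$ for every $\sigma>0$, hence $h(\mu)=\log g$, hence $\mu=m$ by uniqueness of the measure of maximal entropy for $M_g$, and $y$ is normal. Note that this computation never requires $\sigma k$ to be small: the term $\sigma k\log g$ is absorbed as an entropy deficit of relative size $\sigma$. That is exactly the flexibility your reduction discards.
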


For any dynamically generated basic sequence $Q$ and any block of digits $D\in \mathbb{N}_0^\ell$, let 
    \begin{equation}\labeq{PD}
        P_D := \lim_{n\rightarrow\infty}\frac{Q_n(D)}{n}.
    \end{equation}
It will be shown in Lemma~\ref{LimitsExistForDGBS} that $P_D$ exists.
The following is the second main result of the paper (proven in Section~\ref{DynamicallyGeneratedBasicSequencesSection}) and provides us with a Hot Spot Theorem for some deterministic basic sequences.

\begin{theorem}[Hot Spot]\label{StrongestHotSpotForDDGBasicSequences}
    Let $Q = (q_n)_{n = 1}^\infty$ be a basic sequence generated by $(X,\mathscr{B},\mu,T,f,x)$ with $(X,\mathscr{B},\mu,T)$ having zero entropy and $\int_X\log(f)d\mu < \infty$.
    \begin{enumerate}[(i)]
        \item Let $y \in [0,1)$ be such that for any $\sigma \in (0,1)$, there exists a $\delta > 0$ and a subset $\mathcal{N}_{\sigma} \subseteq \mathbb{N}$ of natural upper density at most $1-\sigma$, and a constant $C > 0$ (not dependent on $\sigma$) such that the following holds: for every $0 \le a < b \le \text{min}(a+\delta,1)$ we have

\begin{equation}\label{GeneralizedDDGHotspotAssumption}
    \limsup_{N\rightarrow\infty}\frac{\nu_{(a,b)}\left(y,N,Q;\mathcal{N}_{\sigma}\right)}{N} < C(b-a)^\sigma,
\end{equation}
where $\nu_{(a,b)}\left(y,N,(a_n)_{n = 1}^\infty;\mathcal{N}_\sigma\right) = \nu_{(a,b)}\left(y,N,(a_n)_{n = 1}^N;\mathcal{N}_\sigma\right) = \#\{n \in [0,N-1]\setminus\mathcal{N}_{\sigma}\ :\ \prod_{i = 1}^na_iy\allowbreak \in (a,b)\}$. Then $y \in \mathcal{UDN}(Q)$.

\item Suppose that $(X,\mathscr{B},\mu,T)$ is ergodic. For $A \subseteq \mathbb{N}$, a block of digits $D \in \mathbb{N}_0^\ell$, and a $y = 0.y_1y_2\cdots y_i\cdots_Q \in [0,1)$, let 

    \begin{equation}
        N_n^Q(D,y;A) = \#\{i \in [1,n]\setminus A\ |\ y_{i+j} = d_j\ \forall\ 1\le j \le \ell\}.
    \end{equation}
    Fix $y \in [0,1)$ and $C > 0$. If for every $\sigma \in (0,1)$ there exists $\ell_\sigma \in \mathbb{N}$ and $\mathcal{N}_\sigma \subseteq \mathbb{N}$ with upper density at most $1-\sigma$ such that for every $\ell \ge \ell_\sigma$ and every $D \in \mathbb{N}_0^\ell$  we have

    \begin{equation}\label{GeneralizedDDGHotspotAssumption2}
        \limsup_{n\rightarrow\infty}\frac{N_n^Q(D,y;\mathcal{N}_\sigma)}{n} \le CP_D,
    \end{equation}
    then $y \in \mathcal{UN}(Q)$.
    \end{enumerate}
\end{theorem}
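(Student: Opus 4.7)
The plan is to follow the strategy of Bergelson--Vandehey (Theorem~\ref{StrongestHotSpot}) but work throughout on the skew product $\mathcal{X}^f$ so that the base dynamics on $X$ and the fiber equidistribution on $[0,1)$ are handled simultaneously, using the zero-entropy and integrability hypotheses as the new ingredients that permit one to decouple the base from the fiber.

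For (i), to verify $y \in \UDN(Q)$ I would check that $(x,y)$ is a generic point for $\mathcal{X}^f$ with respect to $\mu \times m$. By a Weyl-type criterion it suffices to show, for every $\phi \in C(X)$ and every $k \in \mathbb{Z}\setminus\{0\}$, that
\[
\frac{1}{N}\sum_{n=1}^N \phi(T^n x)\, e^{2\pi i k\, q_1\cdots q_n y} \longrightarrow 0,
\]
with the $k=0$ case being immediate from the genericity of $x$. I would split the sum over $\mathcal{N}_\sigma$ and its complement. The contribution over $\mathcal{N}_\sigma$ is at most $\|\phi\|_\infty (1-\sigma)$ in the limit and is driven to zero by eventually taking $\sigma\to 1$. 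For the complementary sum, I would approximate $e^{2\pi i k y}$ uniformly by a step function on a fine partition $\{I_j\}$ of $[0,1)$ into intervals of length $\le\delta$, apply \eqref{GeneralizedDDGHotspotAssumption} to each $I_j$ to obtain a $\limsup$ of order $C|I_j|^\sigma$ weighted by $\|\phi\|_\infty$, and then send the mesh to zero followed by $\sigma\to 1$. The crucial step is to decouple $\phi(T^n x)$ from $\mathbbm{1}_{I_j}(q_1\cdots q_n y)$, which is where the zero-entropy hypothesis on $(X,\mathscr{B},\mu,T)$ enters, via a Rokhlin-tower or Furstenberg-disjointness argument that replaces $\phi(T^n x)$ by $\int\phi\, d\mu$ inside each interval-indicator average.

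For (ii), the plan is to reduce to (i) by converting the block frequency bounds into interval visit bounds and then invoking Theorem~\ref{UN=UDN}. Under ergodicity, Lemma~\ref{LimitsExistForDGBS} gives $P_D$ as an average, computable by the generic orbit of $x$, of the admissibility-weighted frequency of $D$. The event $\{q_1\cdots q_n y \in (a,b)\}$ for a short interval of length of order $1/(q_{n+1}\cdots q_{n+\ell})$ decomposes as a finite union of events prescribing the next $\ell$ digits of $y$; summing \eqref{GeneralizedDDGHotspotAssumption2} over the digit blocks $D$ compatible with the base block at that position, the total restricted frequency becomes $C\sum_D P_D$, which by the generic evaluation of $P_D$ equals $C$ times a Riemann sum approximating $|b-a|$. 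The slack coming from the hypothesis produces a bound of the desired shape $C|b-a|^\sigma$ as $\ell\to\infty$, with the exponent $\sigma$ inherited from the allowed upper density $1-\sigma$ of $\mathcal{N}_\sigma$. Applying (i) then gives $y\in\UDN(Q)$, and Theorem~\ref{UN=UDN} upgrades this to $y\in\UN(Q)$.

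The main obstacle is the decoupling step in (i): unlike the classical base-$g$ setting, where the base system is trivial and $\phi(T^nx)$ does not appear, here the hot-spot bound controls only the fiber coordinate, so one must genuinely use zero entropy on $(X,\mathscr{B},\mu,T)$ to prevent $\phi(T^n x)$ from conspiring with the $q_1\cdots q_n y$-term to create a nontrivial limiting correlation. I expect to handle this either through a Kushnirenko-type variational argument on $\mathcal{X}^f$ (where $\int_X\log f\, d\mu<\infty$ keeps the relevant entropy of the skew product under control) or through a direct Furstenberg-disjointness argument exploiting that a zero-entropy deterministic orbit is disjoint from any positive-entropy system generated in the fiber direction.
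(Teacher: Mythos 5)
There is a genuine gap in both parts. In (i), the fatal step is where you partition $[0,1)$ into intervals $I_j$ of length at most $\delta$ and apply \eqref{GeneralizedDDGHotspotAssumption} to \emph{every} $I_j$: the triangle inequality then gives a bound of order $\sum_j C|I_j|^\sigma \approx C\delta^{\sigma-1}$, which diverges as the mesh $\delta\to 0$ whenever $\sigma<1$. The exponent $\sigma$ makes the hypothesis strictly weaker than a linear bound in $b-a$, so it cannot be summed over a full partition; overcoming exactly this is the point of the Pyatetskii-Shapiro/Bergelson--Vandehey scheme. The paper never applies the hypothesis to all intervals at a given scale: it uses determinism (zero entropy via Shannon--McMillan--Breiman) to restrict to at most $e^{L\epsilon^2}$ base words of length $KL$, and Lemma \ref{LemmaUsingMcdiarmid} (McDiarmid) to show that for each such word the set of fiber points whose block average deviates from $b-a$ is a union of only $J_w$ intervals with $J_w$ exponentially small relative to the total number at that scale; \eqref{GeneralizedDDGHotspotAssumption} is applied only to this sparse family, and the competition between the factor $e^{KL(1-\sigma)\tilde f}$ coming from the interval lengths and the McDiarmid gain $e^{-\frac{1}{2}LW^2\epsilon^2}$ is precisely where $\int_X\log f\,d\mu<\infty$ is needed. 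Your proposed decoupling of $\phi(T^nx)$ from the fiber by a Rokhlin-tower or disjointness argument is also not realizable as stated: the empirical measures of $(T^nx,\,q_1\cdots q_ny)$ are not known to converge under the hypothesis, so there is no limiting joining to which disjointness could be applied. The paper sidesteps this entirely by proving $y\in\DNQ$ directly (no test function on $X$ appears in that statement) and upgrading to $\UDN(Q)$ via the separate Theorem \ref{DN(Q)ImpliesUDN(Q)}, whose proof is again determinism plus McDiarmid.

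For (ii), the proposed reduction to (i) cannot work: hypothesis \eqref{GeneralizedDDGHotspotAssumption2} bounds block frequencies by $CP_D$ (exponent $1$), whereas (i) requires the strictly stronger exponent-$\sigma$ bound $C(b-a)^\sigma$, and no summation over compatible digit blocks manufactures that exponent; indeed the paper explicitly states in Remark \ref{RemarkAboutHotSpot} that the $CP_D^\sigma$ version is an open conjecture. (Note also that $\sum_D P_D=1$ over all blocks of a fixed length, not $b-a$, and the blocks compatible with a fixed interval vary with the position through the base block there.) The paper's proof of (ii) instead observes that in the proof of (i) the hypothesis was used only to bound the visit frequency to the exceptional set $B$, and re-derives that single bound from \eqref{GeneralizedDDGHotspotAssumption2}, using ergodicity to pin down the lengths $m(I_{D_j,w})$ within $e^{\pm KL}$ of $e^{-KLM}$ and splitting each $P_D$ into its contributions inside and outside the typical base set.
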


Recalling \refq{Borel}, we would like to have version of Theorem~\ref{MainCorollary} that fully characterizes those $Q$ for which $\NQ=\DNQ$ with a Borel condition. Similarly, we would like an improvement of Theorem~\ref{StrongestHotSpotForDDGBasicSequences} that fully characterizes all $Q$ which admit a Hot Spot Theorem.
We note that the set of $Q$ that are described in Theorem~\ref{MainCorollary} and Theorem~\ref{StrongestHotSpotForDDGBasicSequences} is a Borel set. 
The set of $Q$ satisfying conditions (i) and (iii) of Lemma~\ref{WhenIsASequenceDynamicallyGenerated} is a $\bP_3^0$-complete set by Theorem~6 in the paper of Airey, Jackson, Kwietniak, and Mance \cite{AireyJacksonKwietniakManceSpecificationComplexity}.
While it is not explicitly stated, the reduction given to prove this theorem shows that the set of all $Q$ satisfying every condition of Lemma~\ref{WhenIsASequenceDynamicallyGenerated} is a $\bP_3^0$-complete set. 
So the set of dynamically generated basic sequences is a $\bP_3^0$-complete set.

Further, we note that the set of dynamically generated $Q$ with entropy zero is trivially a $\bp_3^0$ set although it is likely that this set can be proven to be $\bP_3^0$-complete by combining the techniques in \cite{AireyJacksonKwietniakManceSpecificationComplexity} and \cite{AireyJacksonManceNoise}.
The condition that $\int_X\log(f)d\mu < \infty$ is a boundedness condition and thus only $\bS_2^0$.
It can be shown that the ergodicity condition is $\bP_4^0$.
Thus, the set of $Q$ that satisfy the assumptions of Theorem~\ref{MainCorollary} and Theorem~\ref{StrongestHotSpotForDDGBasicSequences} is a Borel set and is $\bP_4^0$ although it is not clear if it is $\bP_4^0$-complete. 
If the assumptions about ergodicity and $\int_X\log(f)d\mu < \infty$ are removed, then we would only have to deal with a $\bP_3^0$-complete condition in Theorem~\ref{MainCorollary}.

Lastly, we provide several questions and conjectures in \refs{Conjectures}

\textbf{Acknowledgements:} Both authors acknowledge being supported by grant
2019/34/E/ST1/00082 for the project “Set theoretic methods in dynamics and number theory,” NCN (The
National Science Centre of Poland).
We would like to thank Yuval Peres for the proof of Lemma \ref{LemmaUsingMcdiarmid}, and Tomasz Downarowicz for a discussion that led to the proof of Theorem \ref{NormalityPreservationUnderRationalMultiplication}.
We would also like to thank Jean-Paul Thouvenot and Fran\c{c}ois Ledrappier for helpful discussions regarding entropy theory.
Furthermore, we would like to thank Shigeki Akiyama for helpful discussions regarding Example \ref{ExampleWithSubstitutions}.\\ \

\section{Examples}\labs{Examples}
In this section we will give various concrete examples of dynamically generated basic sequences to which the theory developed thusfar applies.
In light of Theorem \ref{StrongestHotSpotForDDGBasicSequences} and Theorem~\ref{MainCorollary}, we will begin with 4 examples of $Q = (q_n)_{n = 1}^\infty$ that are generated by an ergodic zero entropy system $\mathcal{X} := (X,\mathscr{B},\mu,T)$, a function $f:X\rightarrow\mathbb{N}_{\ge 2}$ satisfying $\int_X\log(f)d\mu < \infty$, and a generic point $x \in X$.
We will also consider two examples in which $\mathcal{X}$ is ergodic and has positive entropy, and one example in which $\mathcal{X}$ is not ergodic.
The fact that our examples are indeed dynamically generated basic sequences can be checked directly from the definition for the examples that are constructed on a symbolic dynamical system, and using Corollary \ref{DGBSOnTori} for the rest.
For all of the following examples, we use $a,b,c,$ and $d$ to denote distinct elements of $\mathbb{N}_{\ge 2}$.

\begin{example}[Periodic sequences]
    If there exists $m \in \mathbb{N}$ for which $q_n = q_{n+m}$ for all $n \in \mathbb{N}$, then $Q$ is dynamically generated by rotation on a finite set. 
    It is clear that $Q$ is bounded and deterministic in this case.
    Let $g = q_1q_2\cdots q_{m-1}$.
    The fact that $\mathcal{DN}(Q) = \mathbb{N}(Q) = \mathcal{N}_g$ was first observed in \cite{PeriodicBasicSequences}.
    While $\mathcal{UN}(Q)$ and $\mathcal{DN}(Q)$ were not yet defined in $\cite{PeriodicBasicSequences}$, the fact that $\mathcal{UDN}(Q) = \mathcal{UN}(Q) = \mathcal{N}_g$ can be proven directly in this case, and is arguably easier to prove than the fact that $\mathcal{N}(Q) = \mathcal{DN}(Q)$.
\end{example}

\begin{example}[Almost Periodic sequences]
    Let $\alpha \in \mathbb{R}\setminus\mathbb{Q}$ be arbitrary and consider the system $\mathcal{X} := (\mathbb{T},\mathscr{L},m,T)$ given by $T(x) = x+\alpha$.
    Using the natural identification between $\mathbb{T}$ and $[0,1)$, we consider the function $f$ given by $f(x) = a$ if $x \in [0,0.5)$ and $f(x) = b$ if $x \in [0.5,1)$.
    Since $\mathcal{X}$ is uniquely ergodic and $f$ is Riemann integrable, we see that for any $x \in [0,1)$ the sequence $Q = (f(T^nx))_{n = 1}^\infty$ is dynamically generated, bounded, and deterministic.
    The determinism of $Q$ follows from the fact that $Q$ is almost periodic.

    This example may be generalized by considering rotations on higher dimensional tori, as this will always produce a uniquely ergodic system on the orbit closure of the origin, and by replacing $f$ with any other (potentially unbounded) Jordan measurable function.
\end{example}

\begin{example}[Nilsequences]
    Rotations on a nilmanifold are a natural class of systems that generalize rotation on a finite dimensional torus. 
    For a detailed discussion about nilmanifolds and their connections to ergodic theory, we refer the reader to \cite{host2018nilpotent}.
    For now we only record some relevant properties.
    
    A rotation on a nilmanifold can always be modeled as a c.m.p.s. $\mathcal{X} := \left(X,\mathscr{B},\mu,T\right)$ that has zero entropy topological (and hence measurable) entropy.
    If $\mathcal{X}$ possesses a transitive point, then it will be uniquely ergodic.
    If $\mathcal{X}$ does not possess a transitive point, then we can take any $x \in X$, and restrict to the orbit closure of $x$, which will be a new nilmanifold in which $x$ is a transitive point, hence the new system will be uniquely ergodic.

    For a concrete example, let $\alpha \in \mathbb{R}\setminus\mathbb{Q}$ be arbitrary and consider the system $\left(\mathbb{T}^2,\mathscr{L}^2,m^2,T\right)$ with $T(x,y) = (x+\alpha,y+2x+\alpha)$.
    The system is uniquely ergodic, hence $(0,0)$ is a generic point.
    We see that $T^n(0,0) = (n\alpha,n^2\alpha)$.
    Using the natural identification between $\mathbb{T}^2$ and $[0,1)^2$, we consider the function $f$ given by $f(x,y) = a$ if $x \ge y$ and $f(x,y) = b$ if $x < y$.
    This produces the dynamically generated basic sequence $Q = (q_n)_{n = 1}^\infty$ given by $q_n = a$ if $\left\{n\alpha\right\} \ge \left\{n^2\alpha\right\}$ and $q_n = b$ else.
    
\end{example}

\begin{example}[Substitution sequences]\label{ExampleWithSubstitutions}
For a detailed discussion about dynamical systems associated with substitutions, we refer the reader to \cite{queffelec1987substitution}.
Here we only give a quick specialized introduction that will allow for the next 3 examples.

Suppose that we have a finite alpha $\mathcal{A} = \{a_1,\cdots,a_k\}$ and a function $\psi:\mathcal{A}\rightarrow\mathcal{A}^*$, where $\mathcal{A}^*$ is the collection of finite words over $\mathcal{A}$.
The function $\psi$ is called a \textbf{substitution}.
Given a word $w \in \mathcal{A}^*$, we define $\psi(w)$ by applying $\psi$ to each constituent letter of $w$ and then concatenating the results together in order, which yields a new word in $\mathcal{A}^*$.
The substitution $\psi$ is \textbf{primitive} if there exists $t \in \mathbb{N}$ such that for each $a \in \mathcal{A}$, the word $\psi^t(a)$ contains every letter of $\mathcal{A}$ at least once.
We say that the substitution $\psi$ has property (H1) if for each $a \in \mathcal{A}$ we have $\lim_{t\rightarrow\infty}|\psi^t(a)| = \infty$, and $\psi$ has property (H2) if $\psi(a_1)$ is a word that start with the letter $a_1$.
If $\psi$ satisfies properties (H1) and (H2), then there exists an infinite word $\psi^\infty(a_1) \in \mathcal{A}^\mathbb{N}$ whose initial words agree with $\psi^t(a_1)$ for every $t \in \mathbb{N}$.
We obtain a topological dynamical system $(X,T)$ by letting $T:\mathcal{A}^\mathbb{N}\rightarrow\mathcal{A}^\mathbb{N}$ be the left shift map and $X$ the orbit closure of $\psi^\infty(a_1)$.
If $\psi$ is primitive, then $(X,T)$ will be minimal, uniquely ergodic, and have zero entropy.

Our next 3 concrete examples will involve a primitive substitution $\psi:\mathcal{A}\rightarrow\mathcal{A}^*$ that satisfies properties (H1) and (H2).
In all 3 examples we consider the function $f:\{a,b\}^\mathbb{N}\rightarrow\{a,b\}$ given by $f((x_n)_{n = 1}^\infty) = x_1$ and its restriction to $X$.
It is clear that $f$ is continuous and that it generates the topology of $X$.
    \begin{enumerate}[(a)]
        \item \textbf{The Fibonacci Sequence}. 
        Let $\mathcal{A} = \{a,b\}$.
        The substitution $\Psi$ given by $\Psi(a) = b$ and $\Psi(b) = ab$ generates the Fibonacci sequence.
        While $\Psi$ is primitive, and satisfies property (H2), it does not satisfy (H1), so we instead consider $\psi = \Psi^2$.
        We see that $\psi(a) = ab$ and $\psi(b) = bab$, so $\psi$ has all of the desired properties.
        The infinite word $\psi^\infty(a)$ is naturally identified with a dynamically generated basic sequence $Q$.
        To see the first 29 letters of $\psi^\infty(a)$, we write $\psi^t(a)$ for $t \in [0,4]$.

        \begin{equation}
            a, ab, abbab, abbabbababbab, abbabbababbabbababbabbababbab
        \end{equation}

        \item \textbf{The Thue-Morse Sequence}. 
        Let $\mathcal{A} = \{a,b\}$.
        The substitution $\psi$ given by $\psi(a) = ab$ and $\psi(b) = ba$ generates the Thue-Morse sequence and is seen to satisfy all of our desired properties.
        As before, we identify the infinite word $\psi^\infty(a)$ with a dynamically generated basic sequence $Q$.
        To see the first 32 letters of $\psi^\infty(a)$, we write $\psi^t(a)$ for $t \in [0,5]$.

        \begin{equation}
            a, ab, abba, abbabaab, abbabaabbaababba, abbabaabbaababbabaababbaabbabaab
        \end{equation}

        \item \textbf{The Rudin-Shapiro Sequence}.
        Let $\mathcal{A} = \{a,b,c,d\}$.
        The substitution $\psi$ given by $\psi(a) = ab$, $\psi(b) = ac$, $\psi(c) = db$, and $\psi(d) = dc$ generates the Rudin-Shapiro sequence and is seen to satisfy all of our desired properties.
        We once again identify the infinite word $\psi^\infty(a)$ with a dynamically generated basic sequence $Q$.
        To see the first 32 letters of $\psi^\infty(a)$, we write $\psi^t(a)$ for $t \in [0,5]$.

        \begin{equation}
            a, ab, abac, abacabdc, abacabdcabacdcdb, abacabdcabacdcdbabacabdbdcdbdcac
        \end{equation}
    \end{enumerate}
\end{example}

\begin{example}(Bernoulli random sequences)
    Fix some (potentially infinite) $\mathcal{A} \subseteq \mathbb{N}_{\ge 2}$ and a probability measure $\mu$ on $\mathcal{A}$ with full support.
    Consider the m.p.s. $\mathcal{X} := \left(\mathcal{A}^\mathbb{N},\mathscr{B},T,\mu^\mathbb{N}\right)$ where $\mathscr{B}$ is the Borel $\sigma$-algebra, and $T$ is the left shift.
    The system $\mathcal{X}$ is a \textbf{Bernoulli shift}, and it is a system with completely positive entropy, hence disjoint from every system with zero entropy.
    We define a \textbf{$(\mathcal{A},\mu)$-normal sequence} to be a generic point for $T$ in $\mathcal{X}$.
    We see that the function $f:\mathcal{A}^\mathbb{N}\rightarrow\mathcal{A}$ given by $f((x_n)_{n = 1}^\infty) = x_1$ is continuous and generates the topology of $\mathcal{A}^\mathbb{N}$, so any $(\mathcal{A},\mu)$-normal sequence is also a dynamically generated sequence.

    When $\mathcal{A} = [2,11]$ and $\mu$ is the uniform probability measure on $\mathcal{A}$, we have the following 3 concrete examples of $(\mathcal{A},\mu)$-normal sequences.
    All three examples are motivated by examples of numbers that are normal base 10.
    However, normality base 10 is a phenomenon that occurs with the digit set $[0,9]$, so we increase all of the digits by 2 in order to allow for examples of basic sequences.
    \begin{enumerate}[(a)]
        \item \textbf{The Champernowne Number.} Champernowne \cite{champernowne1933construction} showed that if you concatenate the base 10 expansions of the natural numbers in order, then the corresponding number $C = 0.c_1\cdots c_n\cdots \in [0,1)$ is normal base $10$.
        Since

        \begin{equation}
            C = 0.1234567891011121314151617181920212223242526272829...,
        \end{equation}
        we see that the corresponding example of a $(\mathcal{A},\mu)$-normal basic sequence is

        \begin{equation*}
            Q = \bigg(3,4,5,6,7,8,9,(10),(11),3,2,3,3,3,4,3,5,3,6,3,7,3,8,3,9,3,(10),3,(11),4,2,4,3,\cdots
        \end{equation*}

        \item \textbf{Davenport-Erd\H{o}s Constructions.} 
        Davenport and Erd\H{o}s \cite{davenport1952note} proved a polynomial version of Champernowne's result.
        A special case of their result, originally due to Besicovitch in \cite{BesicovitchSquares}, is that if we concatenate the base 10 expansions of the squares in order, then the corresponding number $C_2 \in [0,1)$ is normal.
        Since 

        \begin{equation}
            C_2 = 0.149162536496481100121144169196225256...,
        \end{equation}
        we see that the corresponding example of a $(\mathcal{A},\mu)$-normal basic sequence is 

        \begin{equation*}
            Q = \bigg(3,6,(11),3,8,4,7,5,8,6,(11),8,6,(10),3,3,2,2,3,4,3,3,6,6,3,8,(11),3,(11),8,4,4,...
        \end{equation*}

        \item \textbf{The Copeland-Erd\H{o}s Number.}
        Copeland and Erd\H{o}s \cite{copeland1946note} showed, as a special case of a more general result, that if you concatenate the base 10 expansions of the primes in order, then the corresponding number $C_3 \in [0,1)$ is normal.
        Since

        \begin{equation}
            C_3 = 0.2357111317192329313741434753596167717379838997101...,
        \end{equation}
        we see that the corresponding example of a $(\mathcal{A},\mu)$-normal basic sequence is

        \begin{equation*}
            Q = \bigg(4,5,7,9,3,3,3,5,3,9,3,(11),4,5,4,(11),5,3,5,9,6,3,6,5,6,9,7,5,7,(11),8,3,8,9,...
        \end{equation*}
    \end{enumerate}
\end{example}

\begin{example}[The Adler-Keane-Smorodinsky Sequence]
    Let $\mu$ be the probability measure on $((0,1),\mathscr{L})$ given by $d\mu = (\log(2)(1+x))^{-1}dx$ and let $T_G:(0,1)\rightarrow(0,1)$ be given by $T_Gx = \left\{\frac{1}{x}\right\}$.
    The m.p.s. $((0,1),\mathscr{L},\mu,T_G)$ is the dynamical system that is naturally associated to the study of continued fractions, and we mention that this system also has completely positive entropy even though it is not isomorphic to any Bernoulli shift.
    There is a homeomorphism $\phi:(0,1)\setminus\mathbb{Q}\rightarrow\mathbb{N}^\mathbb{N}$ satisfying $T\circ\phi = \phi\circ T_G$, where $T:\mathbb{N}^\mathbb{N}\rightarrow\mathbb{N}^\mathbb{N}$ is the left shift map.
    In particular, $\phi$ is a bijection between the generic points of $T_G$ and those of $T$.
    Since the function $f:\mathbb{N}^\mathbb{N}\rightarrow\mathbb{N}_{\ge 2}$ given by $f\left((x_n)_{n = 1}^\infty\right) = x_1+1$ is continuous and generates the topology of $\mathbb{N}^\mathbb{N}$, we see that continued fraction normal numbers are naturally identified with dynamically generated basic sequences.
    
    One of the earliest examples of a number $C \in [0,1)$ that is continued fraction normal was given by Adler, Keane, and Smorodinsky \cite{adler1981construction}.
    Their example is to first take an ordering of $\mathbb{Q}\cap(0,1)$ by $\frac{1}{2},\frac{1}{3},\frac{2}{3},\frac{1}{4},\frac{2}{4},\frac{3}{4},\frac{1}{5},\frac{2}{5},\frac{3}{5},\frac{4}{5},\cdots$, and then concatenate their continued fraction expansions (with the convention that the last digit is not 1) together.
    We see that the first few digits of the continued fraction expansion of $C$ are given by

    \begin{equation}
        C = [2,3,1,2,1,2,1,3,5,2,2,1,1,2,1,4,\cdots.
    \end{equation}
    The corresponding dynamically generated basic sequence is given by

    \begin{equation}
        Q = (3,4,2,3,2,3,2,4,6,3,3,2,2,3,2,5,\cdots
    \end{equation}
    For other examples of continued fraction normal numbers, see \cite{vandehey2016new}.
\end{example}

\begin{example}[A non-ergodic sequence]\label{NonErgodicExample}
    Given a finite word $w$ and a $n \in \mathbb{N}\cup\{\infty\}$, let $w^n$ denote the concatenation of $n$ copies of $w$.
    Now consider the basic sequence $Q$ corresponding to the infinite word $abcbac(abc)^2(bac)^2(abc)^3(bac)^3\cdots(abc)^n(bac)^n\cdots$.
    It is readily verified that $Q$ satisfies the conditions of Lemma \ref{WhenIsASequenceDynamicallyGenerated}, hence it is a dynamically generated basic sequence. 
    Let $X$ denote the orbit closure of $Q$ in $\{a,b,c\}^\mathbb{N}$ and $T:X\rightarrow X$ the left shift map. 
    For $1 \le i \le 6$, let $x_i \in X$ be given by $x_1 = (abc)^\infty$, $x_2 = (bca)^\infty$, $x_3 = (cab)^\infty$, $x_4 = (bac)^\infty$, $x_5 = (acb)^\infty$, and $x_6 = (cba)^\infty$.
    Let $\mu = \lim_{N\rightarrow\infty}\frac{1}{N}\sum_{n = 1}^N\delta_{T^nQ}$ with convergence taking place in the weak$^*$ topology, and observe that $\mu = \frac{1}{6}\sum_{i = 1}^6\delta_{x_i}$.
    Since $Q$ is generated by $(X,\mathscr{B},\mu,T)$, and $\{x_1,x_2,x_3\}$ is a nontrivial $T$-invariant set, we see that $Q$ is generated by a non-ergodic system.
\end{example}

\begin{remark}\label{RemarkAboutPotentialGeneralization}
    In Example \ref{NonErgodicExample}, it seems like it would be easier to use the sequence $Q'$ corresponding to $aba^2b^2a^3b^3\cdots a^nb^n\cdots$.
    Unfortunately, $Q'$ does not satisfy condition (ii) of Lemma \ref{WhenIsASequenceDynamicallyGenerated} since the words $ab$ and $ba$ appear with 0 density, so it is not dynamically generated.
    Nonetheless, it can be checked that $Q'$ still satisfies versions of Theorems \ref{StrongestHotSpotForDDGBasicSequences}, Theorem~ \ref{MainCorollary}, and Theorems \ref{RN=N} and \ref{URN=UN} when the various definitions of uniform normality are suitably modified.
    The important property that distinguishes $Q'$ from the pathological example given in Remark \ref{RemarkJustifyingFullSupportAssumption} is that for any finite word $w$ that appears in $Q'$ with $d(w) = 0$, there exists a finite word $w'$ that has the same length as $w$ but is lexicographically larger than $w$ and for which $d(w')$ exists and is positive.
    What this means is that the words $w$ that appear with $0$ density cannot perturb the expected frequency of any block of digits.
    Consequently, all of the theory developed in the previous sections could be applied to this more general class of basic sequences that are also generated by reasonable dynamical systems.
    We chose not to pursue this extra level of generality for the sake of presentation, as much of the discussion is greatly simplified in our current set up.
\end{remark}
\section{Counterexamples}\labs{Counterexamples}
In this Section we given various counterexamples to show that many of the assumptions of our main results are in fact necessary.
We observe that for $0 \le a \le b \le 1$ and $c \in [0,1)$, we have $2c\pmod{1} \in (a,b)$ if and only if $c \in \left(\frac{1}{2}a,\frac{1}{2}b\right)\bigcup\left(\frac{1}{2}a+\frac{1}{2},\frac{1}{2}b+\frac{1}{2}\right)$.
Consequently, we will adopt the notation $2^{-1}(a,b) := \left(\frac{1}{2}a,\frac{1}{2}b\right)\bigcup\left(\frac{1}{2}a+\frac{1}{2},\frac{1}{2}b+\frac{1}{2}\right)$.
It is worth mentioning that the first 3 counterexamples in this section involve Bernoulli random sequences as the basic sequence, and the last example in this section involves Bernoulli random sequences as the sequence of digits.

\begin{example}\label{DN(Q)ButNOtN(Q)}
There exists a non-deterministic dynamically generated sequence $Q = (q_n)_{n = 1}^\infty \in \{2,4\}^\mathbb{N}$ for which $(\mathcal{N}_2\cap\mathcal{DN}(Q))\setminus\mathcal{N}(Q) \neq \emptyset$.
\end{example}

\begin{proof}
    Let $y \in [0,1)$ be normal base 4. We will now inductively the basic sequence $Q$. 
    At the first step we define $q_1 = 4$ if $y \in [0,0.5)$ and $q_1 = q_2 = 2$ if $y \in [0.5,1)$. 
    For the $n^\text{th}$ step of the induction, we assume that $q_i$ is defined for $1 \le i \le M = M(n)$, and we define $q_{M+1} = 4$ if $4^{n-1}y \in [0,0.5)$ and $q_{M+1} = q_{M+2} = 2$ if $4^{n-1}y \in [0.5,1)$.
    We see by the construction of $Q$ that the digits $2$ and $3$ never appear in the base $Q$ expansion of $y$, so $y$ is not $Q$-normal. 
    It remains to check that $y$ is $Q$-distribution normal. 
    Letting $M(n)$ be as above, we see that $M(n+1)-M(n) = 1,2$ and $M(n) \approx \frac{3}{2}n$.
    Letting $0 \le a < b \le 1$ be arbitrary, we see that

    \begin{alignat*}{2}
        & \lim_{N\rightarrow\infty}\frac{1}{N}\left|\{1 \le n \le N\ |\ q_n\cdots q_1y \in (a,b)\}\right| = \lim_{N\rightarrow\infty}\frac{1}{M(N)}\left|\{1 \le n \le M(N)\ |\ q_n\cdots q_1y \in (a,b)\}\right|\\
        = & \lim_{N\rightarrow\infty}\frac{1}{M(N)}\left|\left\{1 \le n \le N\ |\ 4^ny \in (a,b)\}\cup\{1 \le n \le N\ |\ 4^ny \in [0.5,1)]\ \&\ 2\cdot4^ny \in (a,b)]\right\}\right|\\
        = & \lim_{N\rightarrow\infty}\frac{2}{3N}\left|\left\{1 \le n \le N\ |\ 4^ny \in (a,b)\}\cup\{1 \le n \le N\ |\ 4^ny \in [0.5,1)\cap2^{-1}(a,b)\right\}\right|\\
        = & \frac{2}{3}\left(m(a,b)+m\left([0.5,1)\cap2^{-1}(a,b)\right)\right) = b-a.
    \end{alignat*}
\end{proof}

\begin{example}\label{QWithoutHotSpot}
    There exists a non-deterministic dynamically generated sequences $Q = (q_n)_{n = 1}^\infty \in \{2,4\}^\mathbb{N}$ and a $y \in \mathcal{N}_2$ for which $\left(\prod_{j = 1}^nq_jy\right)_{n = 1}^\infty$ converges in distribution to the probability measure $\mu$ given by
    
    \begin{equation}
        \mu(a,b) = \begin{cases}
                    \frac{4}{5}(b-a)&\text{if }0 \le a < b \le \frac{1}{2}\\
                    \frac{6}{5}(b-a)&\text{if }\frac{1}{2}\le a < b \le 1.
                   \end{cases}
    \end{equation}
    In particular, this $Q$ does not admit a Hot Spot Theorem.
\end{example}

\begin{proof}
    Let $y \in [0,1)$ be normal base 4. 
    We will now inductively construct a basic sequence $Q := (q_n)_{n = 1}^\infty \in \{2,4\}^{\mathbb{N}}$.
    At the first step we define $q_1 = 4$ if $y \notin [0.25,0.5)$ and $q_1 = q_2 = 2$ if $y \in [0.25,0.5)$. 
    For the $n^\text{th}$ step of the induction, we assume that $q_i$ is defined for $1 \le i \le M = M(n)$, and we define $q_{M+1} = 4$ if $4^{n-1}y \notin [0.25,0.5)$ and $q_{M+1} = q_{M+2} = 2$ if $4^{n-1}y \in [0.25,0.5)$. 
    Letting $M(n)$ be as above, we see that $M(n+1)-M(n) = 1,2$ and $M(n) \approx \frac{5}{4}n$. 
    Letting $0 \le a < b \le 1$ be arbitrary, we see that

    \begin{alignat*}{2}
        & \lim_{N\rightarrow\infty}\frac{1}{N}\left|\{1 \le n \le N\ |\ q_n\cdots q_1y \in (a,b)\}\right| = \lim_{N\rightarrow\infty}\frac{1}{M(N)}\left|\{1 \le n \le M(N)\ |\ q_n\cdots q_1y \in (a,b)\}\right|\\
        = & \lim_{N\rightarrow\infty}\frac{1}{M(N)}\left|\{1 \le n \le N\ |\ 4^ny \in (a,b)\}\cup\{1 \le n \le N\ |\ 4^ny \in [0.25,0.5)\ \&\ 2\cdot4^ny \in (a,b)]\}\right|\\
        = & \lim_{N\rightarrow\infty}\frac{4}{5N}\left|\{1 \le n \le N\ |\ 4^ny \in (a,b)\}\cup\{1 \le n \le N\ |\ 4^ny \in [0.25,0.5)\cap2^{-1}(a,b)\}\right|\\
        = & \begin{cases}
            \frac{4}{5}m(a,b) = \frac{4}{5}(b-a)&\text{if }0 \le a < b \le \frac{1}{2}\\
            \frac{4}{5}\left(m(a,b)+m\left([0.25,0.5)\cap2^{-1}(a,b)\right)\right) = \frac{6}{5}(b-a)&\text{if }\frac{1}{2} \le a < b \le 1.
        \end{cases}
    \end{alignat*}
\end{proof}

\begin{remark}\label{RemarkOnImprovingTheConstruction}
    Given $C \in \left(1,\frac{5}{4}\right)$, we can modify the previous construction to let $q_{M+1} = q_{M+2} = 2$ if and only if $4^{n-1}y \in [0.25,0.5)$ and $M(n) < Cn$.
    Doing so will result in the measure $\mu$ given by

    \begin{equation}
        \mu(a,b) = \begin{cases}
                    \frac{1}{C}(b-a)&\text{if }0 \le a < b \le \frac{1}{2}\\
                    \left(2-\frac{1}{C}\right)(b-a)&\text{if }\frac{1}{2}\le a < b \le 1.
                   \end{cases}
    \end{equation}
    It is worth noting that in Lemma \ref{DistributionNormalImpliesgNormal} it is shown that $\mathcal{N}(Q) \subseteq \mathcal{N}_2$, so this example also shows that Lemma \ref{DistributionNormalImpliesgNormal} does not have a converse.
\end{remark}

\begin{example}\label{N(Q)ButNotDN(Q)}
    For $a,b \in \mathbb{N}_{\ge 2}$ satisfying $a < b$ and $a|b$, there exists a non-deterministic dynamically generated basic sequence $Q = (q_n)_{n = 1}^\infty \in \{a,b\}^\mathbb{N}$ for which $\mathcal{N}(Q)\setminus\mathcal{DN}(Q) \neq \emptyset$.
\end{example}

\begin{proof}
    Let $(\Omega,\mathbb{P})$ be a probability space and $(q_n(\omega))_{n = 1}^\infty$ a sequences of i.i.d. random variables taking values in $\{a,b\}$ with $\mathbb{P}(q_n = a) = \mathbb{P}(q_n = b) = \frac{1}{2}$. 
    Let $\epsilon \in \big(0,\frac{1}{b}\big]$ be arbitrary and $(y_n(\omega))_{n = 1}^\infty$ be another sequence of independent random variables defined by

    \begin{equation}
        y_n(\omega) = \begin{cases}
                        0&\text{ with probability }\frac{1}{a}+\epsilon\text{ if }q_n(\omega) = a\\
                        1&\text{ with probability }\frac{1}{a}-\epsilon\text{ if }q_n(\omega) = a\\
                        i&\text{ with probability }\frac{1}{a}\hskip 6.5mm\text{ if }q_n(\omega) = a\text{ and }2 \le i < a\\
                        0&\text{ with probability }\frac{1}{b}-\epsilon\text{ if }q_n(\omega) = b\\
                        1&\text{ with probability }\frac{1}{b}+\epsilon\text{ if }q_n(\omega) = b\\
                        i&\text{ with probability }\frac{1}{b}\hskip 6.5mm\text{ if }q_n(\omega) = b\text{ and }2 \le i < b.
                      \end{cases}
    \end{equation}
    For the sake of concreteness, we mention that $y_n(\omega)$ and $q_m(\omega)$ are independent when $n \neq m$.
    Letting $Q(\omega) = (q_n(\omega))_{n = 1}^\infty$, we will show that we have $y(\omega) := 0.y_1(\omega)\cdots y_n(\omega)\cdots_{Q(\omega)} \in \mathcal{N}(Q(\omega))\setminus\mathcal{DN}(Q(\omega))$ with probability $1$.
    Since the i.i.d. sequence $(q_n(\omega))_{n = 1}^\infty$ over $\Omega$ can be modeled as $\left(q_1\left(T^n\omega\right)\right)_{n = 1}^\infty$ for some Bernoulli measure preserving transformation $T:\Omega\rightarrow\Omega$, we can take $\omega$ to be a generic point to see that $(q_n(\omega))_{n = 1}^\infty$ is dynamically generated for a.e. $\omega \in \Omega$, which will yield the desired result.

    To this end, we begin by showing that with probability $1$ we have $y(\omega) \in \mathcal{N}(Q(\omega))$. 
    We will show that all blocks of digits $D = (d_1,\cdots,d_\ell)$ occur with the correct frequency by induction on the length $\ell$ of the block.
    We begin with the base case of $\ell = 1$ of our induction.
    We remark that all calculations in the rest of this proof occur with probability $1$ with respect to $\omega$.

    \begin{alignat}{2}
     &\frac{1}{n}Q(\omega)_n(\{i\}) = \frac{1}{n}\sum_{j = 1}^n\frac{1}{q_j(\omega)} \underset{n\rightarrow\infty}{\longrightarrow} \frac{1}{2}\left(\frac{1}{a}+\frac{1}{b}\right) = \frac{a+b}{2ab}\text{, for }0 \le i < a,\\
     &\frac{1}{n}Q(\omega)_n(\{i\}) = \frac{1}{n}\sum_{j = 1}^n\frac{1}{q_j(\omega)}\mathbbm{1}_{q_j(\omega) = b} \underset{n\rightarrow\infty}{\longrightarrow}\frac{1}{2b}\text{, for }a \le i < b,\\
     &\lim_{n\rightarrow\infty}\frac{1}{n}N_n^{Q(\omega)}(0,y(\omega)) = \underbrace{\frac{1}{2}}_{\mathbb{P}(q_n = a)}\cdot\underbrace{\left(\frac{1}{a}+\epsilon\right)}_{\mathbb{P}(y_n = 0)}+\underbrace{\frac{1}{2}}_{\mathbb{P}(q_n = b)}\cdot\underbrace{\left(\frac{1}{b}-\epsilon\right)}_{\mathbb{P}(y_n = 0)} = \frac{a+b}{2ab},\\
     &\lim_{n\rightarrow\infty}\frac{1}{n}N_n^{Q(\omega)}(1,y(\omega)) = \underbrace{\frac{1}{2}}_{\mathbb{P}(q_n = a)}\cdot\underbrace{\left(\frac{1}{a}-\epsilon\right)}_{\mathbb{P}(y_n = 1)}+\underbrace{\frac{1}{2}}_{q_n(\omega) = b}\cdot\underbrace{\left(\frac{1}{b}+\epsilon\right)}_{\mathbb{P}(y_n = 1)} = \frac{a+b}{2ab},
     \end{alignat}
     \begin{alignat}{2}
     &\lim_{n\rightarrow\infty}\frac{1}{n}N_n^{Q(\omega)}(i,y(\omega)) = \underbrace{\frac{1}{2}}_{\mathbb{P}(q_n = a)}\cdot\underbrace{\frac{1}{a}}_{\mathbb{P}(y_n = i)}+\underbrace{\frac{1}{2}}_{\mathbb{P}(q_n = b)}\cdot\underbrace{\frac{1}{b}}_{\mathbb{P}(y_n = i)} = \frac{a+b}{2ab}\text{, for }2 \le i < a\\
     &\lim_{n\rightarrow\infty}\frac{1}{n}N_n^{Q(\omega)}(i,y(\omega)) = \underbrace{\frac{1}{2}}_{\mathbb{P}(q_n = a)}\cdot\underbrace{0}_{\mathbb{P}(y_n = i)}+\underbrace{\frac{1}{2}}_{\mathbb{P}(q_n = b)}\cdot\underbrace{\frac{1}{b}}_{\mathbb{P}(y_n = i)} = \frac{1}{2b}\text{, for }a \le i < b.
    \end{alignat}
    Now that we have completed the base case of $\ell = 1$, we proceed to the inductive step and assume that the desired result holds for all blocks of length $\ell = L$ so that we can show that the result also holds for blocks of length $\ell = L+1$.
    Let $D = (d_1,\cdots,d_L,d_{L+1}) \in \{0,\cdots,b-1\}^{L+1}$ be a block of digits.
    Recalling that $(q_n(\omega))_{n = 1}^\infty$ and $(y_n(\omega))_{n = 1}^\infty$ are i.i.d. sequences, we see that if for some $j \in \mathbb{N}$ we consider the set $\Omega_{j,B} \subseteq \Omega$ given by
    
    \begin{equation}
        \Omega_{j,B} = \{\omega \in \Omega\ |\ \left(y_j(\omega),y_{j+1}(\omega),\cdots,y_{j+L-1}(\omega)\right) = (b_1,b_2,\cdots,b_L)\},
    \end{equation} 
    then we still have that $\mathbb{P}(q_{j+\ell} = a\ |\ \omega \in \Omega_{j,B}) = \mathbb{P}(q_{j+\ell} = b\ |\ \omega \in \Omega_{j,B}) = \frac{1}{2}$, and the distribution of $y_{j+L}(\omega)$ also does not change when restricted to $\Omega_{j,B}$.
    Consequently, we see that if $D' = (d_1,d_2,\cdots,d_L)$, then

    \begin{alignat}{2}
        &\lim_{n\rightarrow\infty}\frac{1}{n}Q_n(\omega)(D) = \left(\lim_{n\rightarrow\infty}\frac{1}{n}Q(\omega)_n\left(D'\right)\right)\cdot\left(\lim_{n\rightarrow\infty}\frac{1}{n}Q(\omega)_n(\{b_{L+1}\})\right)\text{, and}\\
        &\lim_{n\rightarrow\infty}\frac{1}{n}N_n^{Q(\omega)}(D,y(\omega)) = \left(\lim_{n\rightarrow\infty}\frac{1}{n}N_n^{Q(\omega)}\left(D',y(\omega)\right)\right)\left(\lim_{n\rightarrow\infty}\frac{1}{n}N_n^{Q(\omega)}(d_{L+1},y(\omega))\right),
    \end{alignat}
    so the desired result follows from the induction hypothesis.

    Now we show that $y(\omega) \notin \mathcal{DN}(Q(\omega))$ almost surely.
    Let $c = \frac{b}{a}$.
    We observe that $q_n(\omega)\cdots q_1(\omega)\allowbreak y(\omega) \in \big[0,\frac{1}{a}\big)$ if and only if $(q_{n+1}(\omega),x_{n+1}(\omega)) \in \{(a,0),(b,0),(b,1),\cdots,(b,c-1)\}$, so 
    
    \begin{equation}
        \left(\mathbbm{1}_{\big[0,\frac{1}{a}\big)}(q_n(\omega)\cdots q_1(\omega)y(\omega))\right)_{n = 1}^\infty
    \end{equation}
    is an i.i.d. sequence of random variables.
    Consequently, we can use the strong law of large numbers to see that

    \begin{alignat*}{2}
        &d\left(\left\{n \in \mathbb{N}\ |\ q_1(\omega)q_2(\omega)\cdots q_n(\omega)y(\omega) \in \bigg[0,\frac{1}{a}\bigg)\right\}\right) = \lim_{N\rightarrow\infty}\frac{1}{N}\sum_{n = 1}^N\mathbbm{1}_{\big[0,\frac{1}{a}\big)}(q_n(\omega)\cdots q_1(\omega)y(\omega))\\
        = &\mathbb{P}\left(y(\omega) \in \bigg[0,\frac{1}{a}\bigg)\right) = \underbrace{\frac{1}{2}}_{\mathbb{P}(q_1 = a)}\cdot\underbrace{\left(\frac{1}{a}+\epsilon\right)}_{\mathbb{P}(y_1 = 0)}+\underbrace{\frac{1}{2}}_{\mathbb{P}(q_1 = b)}\cdot\underbrace{\frac{c}{b}}_{\mathbb{P}(y_1 < c)} = \frac{1}{a}+\frac{1}{2}\epsilon > \frac{1}{a}.
    \end{alignat*}
\end{proof}

\begin{remark}
    In Example \ref{N(Q)ButNotDN(Q)}, the assumption that $a|b$ was used to simplify the proof that $y(\omega) \notin\mathcal{DN}(Q(\omega))$ almost surely.
    We leave it as an exercise to show that Example \ref{N(Q)ButNotDN(Q)} holds for any $a \neq b \in \mathbb{N}_{\ge 2}$.
\end{remark}

\begin{example}\label{NecessityOfFiniteIntegral}
    Let $g \in \mathbb{N}_{\ge 2}$ and $Q = (q_n)_{n = 1}^\infty \in \left(\{g^n\}_{n = 1}^\infty\right)^\mathbb{N}$ be a basic sequence generated by $(X,\mathscr{B},\mu,T,f,x)$ with $\int_X\log(f)d\mu = \infty$.
    \begin{enumerate}[(i)]
        \item $\mathcal{N}_g\setminus(\mathcal{N}(Q)\cup\mathcal{DN}(Q)) \neq \emptyset$.

        \item $\mathcal{UN}(Q)\setminus\mathcal{N}_g \neq \emptyset$. 
    \end{enumerate}
\end{example}

\begin{proof}[Proof of (i)]
    Let $q_n = g^{a_n}$, let $s_n = \sum_{i = 1}^na_i$, let $(Y_n')_{n = 1}^\infty$ be an i.i.d. sequence of random variables whose values are uniformly distributed over $\{0,1,\cdots,g-1\}$, and consider the sequence of independent random variables $(Y_n)_{n = 1}^\infty$ given by
    
    \begin{equation}
        Y_n = \begin{cases}
                0&\text{if }a_m = a_1\text{ and }s_{m-1} < n \le s_m\\
                Y_n'&\text{ otherwise}.
              \end{cases}
    \end{equation}
    We will show that $y = 0.Y_1Y_2\cdots Y_n\cdots_g$ is almost surely a $g$-normal number that is not $Q$-normal or $Q$-distributional normal. To this end, we begin by observing that 

    \begin{equation}
        \lim_{N\rightarrow\infty}\frac{\sum_{n = 1}^Na_1\mathbbm{1}_{a_n = a_1}}{\sum_{n = 1}^Na_n} = \lim_{N\rightarrow\infty}\frac{\frac{1}{N}\sum_{n = 1}^Na_1\mathbbm{1}_{a_n = a_1}}{\frac{1}{N}\sum_{n = 1}^Na_n} 
 \le \lim_{N\rightarrow\infty}\frac{a_1}{\frac{1}{N}\sum_{n = 1}^N\log(q_n)} = 0.
    \end{equation}
    It follows that the digits of the base $g$ expansion of $y$ are generated by an i.i.d. sequence of random variables along a subset of full density, so $y$ is almost surely normal base $g$. 
    To see that $y$ is never normal base $Q$, it suffices to observe that the digits corresponding to a base $q_n \neq q_1$ are generated uniformly at random, so they all occur with the expected frequency, but the digits corresponding to a base of $q_n = q_1$ are always $0$, so the frequency of the digit $0$ will be too high.
    Furthermore, to see that $y$ is almost surely not $Q$-distribution normal, we let $v_n = q_n\cdots q_2q_1y\pmod{1}$ and observe that if $y$ was $Q$-distribution normal, then we would have

    \begin{equation}
        V := \lim_{N\rightarrow\infty}\frac{1}{N}\left|\left\{1 \le n \le N\ |\ v_n \in \bigg[0,\frac{1}{g}\bigg)\right\}\right| = \frac{1}{g}.
    \end{equation}
    However, letting $y = 0.y_1y_2\cdots y_n\cdots_Q$ be the base $Q$ expansion of $y$ and $d_m := \displaystyle\lim_{N\rightarrow\infty}\frac{1}{N}\sum_{n = 1}^N\mathbbm{1}_{q_n = g^m}$, we see that

    \begin{equation}
        V = \lim_{N\rightarrow\infty}\frac{1}{N}\sum_{n = 1}^N\mathbb{P}\left(y_n \in \big[0,g^{a_n-1}\big)\right) = d_1+\sum_{n = 2}^\infty \frac{1}{g}d_n = \frac{1}{g}+\left(1-\frac{1}{g}\right)d_1 > \frac{1}{g}.
    \end{equation}
\end{proof}

\begin{proof}[Proof of (ii)]
    For each $m \in \mathbb{N}$ let $B_m = \{n \in \mathbb{N}\ |\ q_n = g^m\}$.
    We observe that if $y \in [0,1)$ is such that all of its digits in its base $Q$ expansion are randomly generated, then $y$ is almost surely uniformly $Q$-normal.
    Furthermore, if we take a $y \in [0,1)$, and for each $m \in \mathbb{N}$ we alter finitely many digits of the base $Q$ expansion of $y$ along the set $B_m$ to produce a number $y'$, then $y' \in \mathcal{UN}(Q)$ if and only if $y \in \mathcal{UN}(Q)$.
    Using this idea, we will construct a $y \in [0,1)$ that is not normal base $g$.
    We inductively construct the base $Q$ digits of $y$. 
    For the base case, let the first digit of $y$ be arbitrary and let $N_1 = 1$. 
    Now assume that the first $N_m$ digits of $y$ have been constructed. 
    Let $A_m$ denote the collection of bases that do not appear in $(q_n)_{n = 1}^{N_m}$ and observe that

    \begin{equation}
        \lim_{N\rightarrow\infty}\frac{\frac{1}{N}\sum_{n = 1}^Na_n\mathbbm{1}_{A_m^c}(q_n)}{\frac{1}{N}\sum_{n = 1}^Na_n} \le \lim_{N\rightarrow\infty}\frac{\text{max}(A_m^c)}{\frac{1}{N}\sum_{n = 1}^N\log(q_n)} = 0\text{, hence }\lim_{N\rightarrow\infty}\frac{\frac{1}{N}\sum_{n = 1}^Na_n\mathbbm{1}_{A_m^c}(q_n)}{\frac{1}{N}\sum_{n = 1}^Na_n\mathbbm{1}_{A_m}(q_n)} = 0.
    \end{equation}
    Let $N_{m+1}'$ be such that

    \begin{equation}\label{TooMany0sEquation}
        \frac{\sum_{n = 1}^{N_{m+1}'}a_n\mathbbm{1}_{A_m^c}(q_n)}{\sum_{n = 1}^{N_{m+1}'}a_n\mathbbm{1}_{A_m}(q_n)} < \frac{1}{m}.
    \end{equation}
    If $q_n \in A_m$ for some $N_m < n \le N_{m+1}'$, then set $y_n = 0$, and if $q_n \in A_m$ for some $n > N_{m+1}'$, then let $y_n$ be picked uniformly at random from all possible digits. 
    Equation \eqref{TooMany0sEquation} tells us that the first $N_{m+1}'$ digits of the base $g$ expansion of $y$ have at least $m$ times as many $0$s as all other digits combined, which shows us that $y$ is not normal base $g$. 
\end{proof}

\section{Properties of Dynamically Generated Basic Sequences}\label{DynamicallyGeneratedBasicSequencesSection}

We may now begin to prove the theorems in the paper and develop our notions of dynamically generated basic sequences and uniform normality and distribution normality (defined in \refs{DefinitionOfDGBSSubsection}).

\begin{lemma}\label{WhenIsASequenceDynamicallyGenerated}
    A basic sequence $Q = (q_n)_{n = 1}^\infty$ is dynamically generated if and only if the following conditions hold:
    \begin{enumerate}[(i)]
        \item For any $w := (w_1,w_2,\cdots,w_k) \in \mathbb{N}_{\ge 2}^k$ the following limit exists

    \begin{equation}\label{Assumption1ForDSS}
        d(w) := \lim_{N\rightarrow\infty}\frac{1}{N}\left|\left\{1 \le n \le N\ |\ q_{n+i} = w_i\ \forall\ 0 \le i < k\right\}\right|.
    \end{equation}

    \item If $d(w) = 0$, then $w \neq [q_n,q_{n+1},\cdots,q_{n+k-1}]$ for any $n \in \mathbb{N}$.

    \item For any $k \in \mathbb{N}$, we have

    \begin{equation}\label{Assumption2ForDSS}
        \sum_{|w| = k}d(w) = 1.
    \end{equation}
    \end{enumerate}
\end{lemma}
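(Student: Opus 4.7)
The plan is to prove the two directions separately, with the bulk of the work being the backward direction.

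For the forward direction, I would assume $Q$ is generated by some $(X,\mathscr{B},\mu,T,f,x)$. For each block $w = (w_1,\ldots,w_k) \in \mathbb{N}_{\ge 2}^k$, consider the set $E_w := \bigcap_{i=1}^{k} T^{-i} f^{-1}(w_i)$. Because $\mathbb{N}_{\ge 2}$ carries the discrete topology and $f$ is continuous, each $f^{-1}(w_i)$ is clopen, hence $E_w$ is clopen and $\mathbbm{1}_{E_w}$ is bounded continuous. Since $q_n = f(T^n x)$, the condition $T^n x \in E_w$ is equivalent to $(q_{n+1},\ldots,q_{n+k}) = w$, so applying genericity of $x$ to $\mathbbm{1}_{E_w}$ yields (i) with $d(w) = \mu(E_w)$. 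Condition (iii) then becomes the tautology $\sum_{|w|=k} \mu(E_w) = \mu(X) = 1$, since the cylinders of length $k$ partition $X$. For (ii), if $d(w) = \mu(E_w) = 0$, then the openness of $E_w$ combined with the full-support property of $\mu$ (built into the definition of a c.m.p.s.) forces $E_w = \emptyset$, which precludes $w$ from appearing in $Q$ at any position.

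For the backward direction, I would build the c.m.p.s.\ directly from $Q$. Take $X$ to be the orbit closure of $Q$ inside $\mathbb{N}_{\ge 2}^{\mathbb{N}}$ under the shift $\sigma$; this is a closed subspace of a Polish space, hence Polish. Define a set function on cylinders by $\mu(E_w) := d(w)$. Condition (i) guarantees the values are well defined, and condition (iii) gives the consistency needed so that Kolmogorov's extension theorem (or equivalently the weak-$\ast$ limit of the empirical averages $\frac{1}{N}\sum_{n=1}^N \delta_{\sigma^n Q}$, tight by (iii)) produces a Borel probability measure $\mu$ on $\mathbb{N}_{\ge 2}^{\mathbb{N}}$. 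The measure is $\sigma$-invariant, which follows by applying (i) to the shifted block $w$ against the unshifted one. Taking $f$ to be the first-coordinate projection from $X$ to $\mathbb{N}_{\ge 2}$ and $T := \sigma|_X$ gives a continuous function whose $T$-translates cut out all cylinders, so $f$ generates the topology of $X$. To verify that $Q$ is generic, I would first note the defining convergence $\frac{1}{N}\sum_{n=1}^N \mathbbm{1}_{E_w}(\sigma^n Q) \to \mu(E_w)$ is just (i) restated, then extend to arbitrary $F \in C(X) \cap L^1(X,\mu)$ by a standard Portmanteau-type approximation (automatic when $Q$ is bounded, since then $X$ is compact, and handled via the tightness argument when $Q$ is unbounded).

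The step where I expect condition (ii) to do its essential work is the verification of the two remaining c.m.p.s.\ requirements: that $\mu$ is actually supported on $X$, and that $\mu$ gives positive measure to every non-empty open subset of $X$. Both reduce to a single observation: a cylinder $E_w$ meets $X$ if and only if the block $w$ appears somewhere in $Q$, because any $y \in X$ is a shift-limit of $Q$ and therefore its initial blocks must occur in $Q$. By the contrapositive of (ii), every block appearing in $Q$ has $d(w) > 0$, so $\mu(E_w) > 0$ whenever $E_w \cap X$ is non-empty, which yields full support of $\mu$ on $X$. Conversely, blocks that do not appear have $\mu$-measure zero, and $X^c$ decomposes as a countable union of such cylinders, so $\mu(X^c) = 0$ and $\mu(X) = 1$. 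Without (ii), the construction would still produce a shift-invariant measure but the resulting system would generally fail the full-support hypothesis of a c.m.p.s., so (ii) is both necessary and where the proof's only real content lies.
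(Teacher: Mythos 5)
Your proposal is correct and follows essentially the same route as the paper: the forward direction uses that each cylinder $E_w$ is clopen together with genericity and the partition property, and the backward direction builds the system as the shift on the orbit closure of $Q$ with the first-coordinate map, obtaining $\mu$ as the weak$^*$ limit of empirical measures via condition (iii) and using condition (ii) for full support. You spell out a few details the paper leaves implicit (shift-invariance, $\mu(X^c)=0$, and the extension of genericity from cylinders to all continuous integrable functions), but the argument is the same.
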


\begin{proof}
    For the first direction, let us assume that $Q$ satisfies (i)-(iii). Let $X' = \mathbb{N}_{\ge 2}^\mathbb{N}$ with the product topology, which is complete separable metric space under the metric $d:X'\times X'\rightarrow[0,1]$ given by 
    
    \begin{equation}
        d\left((x_n)_{n = 1}^\infty,(y_n)_{n = 1}^\infty\right) = \sum_{n = 1}^\infty\frac{1}{2^n}\delta_{x_n = y_n}.
    \end{equation}
    Let $T:X'\rightarrow X'$ be given by $T(x_n)_{n = 1}^\infty = (x_{n+1})_{n = 1}^\infty$, let $X = c\ell(\{T^nQ\}_{n = 1}^\infty)$, and let $f:X\rightarrow\mathbb{N}_{\ge 2}$ be given by $f((x)_{n = 1}^\infty) = x_1$. 
    We see that for each $w = (w_1,w_2,\cdots,w_k) \in \mathbb{N}_{\ge 2}^k$ we have
    
    \begin{equation}
        E_w = \left\{(x_n)_{n = 1}^\infty \in X\ |\ x_i = w_i\ \forall\ 1 \le i \le k\right\},
    \end{equation}
    which is a basic open set of $X$.
    We use Equations \eqref{Assumption1ForDSS} to see that

    \begin{equation}
        \lim_{N\rightarrow\infty}\frac{1}{N}\sum_{n = 1}^N\mathbbm{1}_{E_w}(T^nQ) = d(w).
    \end{equation}
    Equation \eqref{Assumption2ForDSS} allows us to invoke the Kolmogorov Consistency Theorem to see that  

    \begin{equation}
        \mu := \lim_{N\rightarrow\infty}\frac{1}{N}\sum_{n = 0}^{N-1}\delta_{T^nQ}
    \end{equation}
    is a well defined probability measure with convergence taking place in the weak$^*$ topology. We see that $Q$ is a generic point for $\mu$ by construction, condition (ii) ensures that each non-empty open set has positive $\mu$-measure, and it is immediate that $q_n = f(T^nQ)$ and that $f$ generates the topology of $X$.

    For the next direction let us assume  $Q$ is generated by $(X,\mathscr{B},\mu,T,f,x)$. 
    Property (i) follows from the fact that each $E_w$ is an open set and $x$ is a generic point. Property (ii) follows from the fact that all non-empty open sets have positive $\mu$-measure. Property (iii) follows from the fact that $\{E_w\ |\ w \in \mathbb{N}_{\ge 2}^k\}$ forms a partition of $X$.
\end{proof}

\begin{remark}\label{RemarkJustifyingFullSupportAssumption}
    The reason that we have condition (ii) in Lemma~\ref{WhenIsASequenceDynamicallyGenerated}, and analogously the reason that we assume $\mu$ assigns positive measure to open subsets of $X$ is seen through the following example.
    Let $(q_n)_{n = 1}^\infty$ be given by $q_n = 2$ if $n$ is not a perfect square, and $q_n = 3$ if $n$ is a perfect square.
    Let $y = 0.y_1y_2\cdots y_n\cdots_2$ be a normal number base $2$, and let $y' = 0.y_1y_2\cdots y_n\cdots_Q$.
    It can be checked that $y' \in \mathcal{DN}(Q)\setminus\mathcal{N}(Q)$ even though $Q$ satisfies conditions (i) and (iii) of Theorem \ref{WhenIsASequenceDynamicallyGenerated}.
    The reason for this is that $\sum_{n = m^2}\frac{1}{3} = \infty$, so we still expect to see infinitely many occurrences of the digit $2$ in elements of $\mathcal{N}(Q)$ even though the density of the occurrences of $3$s among the $q_n$ is $0$. 
    In particular, we see that $d(w) = \mu(E_w)$, and the measurable dynamics that we do on $X$ will ignore the $E_w$ for which $\mu(E_w) = 0$, even though the notion of normality for a basic sequence does not ignore bases that appear with $0$ density.
    See also Remark \ref{RemarkAboutPotentialGeneralization}.
\end{remark}

\begin{lemma}\label{LimitsExistForDGBS}
    Let $Q = (q_n)_{n = 1}^\infty$ be a dynamically generated basic sequence.
    \begin{enumerate}[(i)]
        \item For any block of digits $D = (d_1,\cdots,d_\ell) \in \mathbb{N}_0^\ell$,

    \begin{equation}
        P_D = \lim_{n\rightarrow\infty}\frac{Q_n(D)}{n}\text{ exists.}
    \end{equation}

    \item For any block of digits $D = (d_1,\cdots,d_\ell) \in \mathbb{N}_0^\ell$ and any block of bases $B = (b_1,\cdots,b_\ell) \in \mathbb{N}_{\ge 2}^\ell$,

    \begin{equation}
        P_{D,B} := \lim_{n\rightarrow\infty}\frac{Q_n(D,B)}{n}\text{ exists.}
    \end{equation}

    \item For any $\ell \in \mathbb{N}$ we have

    \begin{equation}\label{SumOfProbabilitiesIs1Equation}
        \sum_{D \in \mathbb{N}_0^\ell}P_D = 1.
    \end{equation}

    \item For any $\ell \in \mathbb{N}$ and any block of digits $D \in \mathbb{N}_0^\ell$ we have

    \begin{equation}\label{SumOfProbabilitiesIs1Equation2}
        \sum_{B \in \mathbb{N}_{\ge 2}^\ell}P_{D,B} = P_D.
    \end{equation}
    \end{enumerate}
\end{lemma}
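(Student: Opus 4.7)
The plan is to realize every quantity in the lemma as an integral against $\mu$ by expressing $Q_n(D)/n$ and $Q_n(D,B)/n$ as Birkhoff averages along the orbit of the generic point $x$. Since $Q$ is dynamically generated, $q_n = f(T^n x)$ with $f \in C(X)$ taking values in the discrete space $\mathbb{N}_{\ge 2}$. From this I first deduce that every level set $f^{-1}(\{k\})$ is clopen: it is open by continuity of $f$, and its complement $\bigcup_{j \ne k} f^{-1}(\{j\})$ is open for the same reason. Consequently each set $\{y : f(T^k y) = m\}$ and each set $\{y : f(T^k y) > d\}$ is clopen (the complement of the latter being a finite union of clopens), so the corresponding indicator functions are continuous, and $1/f$ is continuous because $f \ge 2$. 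These observations are what make the auxiliary functions below continuous.

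For part (i), I introduce
$$G_D(y) = \frac{\prod_{k=0}^{\ell-1} \mathbbm{1}[d_{k+1} < f(T^k y)]}{\prod_{k=0}^{\ell-1} f(T^k y)},$$
which is continuous and bounded above by $1$. A direct substitution using $q_n = f(T^n x)$ gives $\mathcal{I}_{Q,j}(D)/(q_j \cdots q_{j+\ell-1}) = G_D(T^j x)$, so $Q_n(D) = \sum_{j=1}^n G_D(T^j x)$. Applying the generic point property of $x$ to $G_D \in C(X) \cap L^1(X, \mu)$ yields $P_D = \int_X G_D\, d\mu$, proving (i). For part (ii), the analogous continuous bounded function is
$$H_{D,B}(y) = \frac{\mathbbm{1}[D < B]}{b_1 \cdots b_\ell} \prod_{k=0}^{\ell-1} \mathbbm{1}[f(T^k y) = b_{k+1}],$$
and the same substitution together with the generic point property yields $P_{D,B} = \int_X H_{D,B}\, d\mu$.

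Parts (iii) and (iv) then follow from pointwise identities combined with monotone convergence. Summing $G_D(y)$ over $D \in \mathbb{N}_0^\ell$ factors coordinatewise as a product of sums of the form $\sum_{d < f(T^k y)} 1 = f(T^k y)$, which cancels the denominator and gives $\sum_D G_D(y) \equiv 1$; integrating termwise (all $G_D$ are nonnegative) proves (iii). For (iv), fix $y$ and observe that exactly one $B \in \mathbb{N}_{\ge 2}^\ell$ makes $\prod_k \mathbbm{1}[f(T^k y) = b_{k+1}]$ nonzero, namely $B = (f(y), f(Ty), \ldots, f(T^{\ell-1} y))$, and for that $B$ we have $H_{D,B}(y) = G_D(y)$ because $\mathbbm{1}[D<B]$ collapses to $\prod_k \mathbbm{1}[d_{k+1} < f(T^k y)]$ and $b_1 \cdots b_\ell$ collapses to $\prod_k f(T^k y)$. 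Hence $\sum_B H_{D,B}(y) = G_D(y)$ pointwise, and integrating termwise produces (iv).

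The only real subtlety is the verification that $G_D$ and $H_{D,B}$ are continuous, which is the step where the discreteness of $\mathbb{N}_{\ge 2}$ is used in an essential way; note in particular that no integrability hypothesis on $\log f$ is needed for this lemma since both auxiliary functions are bounded by $1$. Once continuity is settled, the four assertions reduce to bookkeeping with the generic point hypothesis and Tonelli's theorem, and I do not anticipate any serious obstacle.
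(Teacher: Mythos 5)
Your proposal is correct and follows essentially the same route as the paper: the paper's proof introduces the same auxiliary functions (its $f_D$ and $f_{D,B}$ are your $G_D$ and $H_{D,B}$ up to a harmless index shift), applies the generic-point property to identify $P_D = \int_X f_D\,d\mu$ and $P_{D,B} = \int_X f_{D,B}\,d\mu$, and derives (iii) and (iv) from the same pointwise identities $\sum_D f_D \equiv 1$ and $\sum_B f_{D,B} = f_D$ followed by termwise integration. Your explicit verification that the level sets of $f$ are clopen (hence that $G_D$ and $H_{D,B}$ are continuous) is a detail the paper leaves implicit, but it is not a different argument.
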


\begin{proof}
    Let $Q$ be generated by $(X,\mathscr{B},\mu,T,f,x_0)$. To show (i), let

    \begin{equation}\label{f_DDefiningEquation}
        f_D(x) = \begin{cases}
                    \left(\prod_{i = 1}^\ell f\left(T^ix\right)\right)^{-1}&\text{if }f\left(T^ix\right) > d_i\ \forall\ 1 \le i \le \ell\\
                    0&\text{else.}
              \end{cases}
    \end{equation}
    Since $f_D \in C(X)$ and $x_0 \in X$ is generic, we see that

    \begin{equation}
        P_D = \lim_{n\rightarrow\infty}\frac{Q_n(D)}{n} = \lim_{N\rightarrow\infty}\frac{1}{N}\sum_{n = 1}^Nf_D\left(T^nx_0\right) = \int_Xf_Dd\mu
    \end{equation}
    is well defined. To show (ii), let

    \begin{equation}
        f_{D,B}(x) = \begin{cases}
                    \left(\prod_{i = 1}^\ell b_i\right)^{-1}&\text{if }x \in \bigcap_{i = 1}^\ell T^{-i}f^{-1}(\{b_i\})\text{ and }b_i > d_i\text{ for all }1 \le i \le \ell\\
                    0&\text{else.}
              \end{cases}
    \end{equation}
    Since $f_{D,B} \in C(X)$ and $x_0 \in X$ is generic, we see that

    \begin{equation}
        P_{D,B} := \lim_{n\rightarrow\infty}\frac{Q_n(D,B)}{n} = \lim_{N\rightarrow\infty}\frac{1}{N}\sum_{n = 1}^Nf_{D,B}(T^nx_0) = \int_Xf_{D,B}d\mu
    \end{equation}
    is well defined. 
    
    We now proceed to show (iii). Firstly, we observe that

    \begin{equation}
        \sum_{D \in \mathbb{N}_0^\ell}P_D = \sum_{D \in \mathbb{N}_0^\ell}\int_Xf_Dd\mu = \int_X\sum_{D \in \mathbb{N}_0^\ell}f_Dd\mu,
    \end{equation}
    so to prove Equation \eqref{SumOfProbabilitiesIs1Equation} it suffices to show that $\displaystyle\sum_{w \in \mathbb{N}_0^\ell}f_w = 1$. To this end, let $x \in X$ be arbitrary and let $w = (w_1,\cdots,w_\ell) \in \mathbb{N}_{\ge 2}^\ell$ be such that $x \in E_w = \bigcap_{i = 1}^\ell T^{-i}f^{-1}(\{w_i\})$. We see that

    \begin{equation}
        \sum_{D \in \mathbb{N}_0^\ell}f_D(x) = \sum_{\underset{1 \le i \le \ell}{0 \le d_i < w_i}}\left(\prod_{i = 1}^\ell w_i\right)^{-1} = 1.
    \end{equation}
    Lastly, to show (iv) we observe that

    \begin{equation}
        \sum_{B \in \mathbb{N}_{\ge 2}^\ell}P_{D,B} = \sum_{B \in \mathbb{N}_{\ge 2}^\ell}\int_Xf_{D,B}d\mu = \int_X\sum_{B \in \mathbb{N}_{\ge 2}^\ell}f_{D,B}d\mu,
    \end{equation}
    so Equation \eqref{SumOfProbabilitiesIs1Equation2} follows from the observation that $\displaystyle\sum_{B \in \mathbb{N}_{\ge 2}^\ell}f_{D,B} = f_D$.
\end{proof}

\begin{remark}
In light of Lemma \ref{LimitsExistForDGBS}, we see that for a dynamically generated basic sequence $Q$ and a $z \in [0,1)$, we have $z \in \mathcal{N}(Q)$ if and only if for all $D \in \mathbb{N}_0^\ell$ with $P_D \neq 0$ we have

\begin{equation}\label{NormalityForDGBSEquation}
    \lim_{n\rightarrow\infty}\frac{N_n^Q(D,z)}{n} = P_D = \lim_{n\rightarrow\infty}\frac{Q_n(D)}{n}.
\end{equation}
Similarly, we see that $z \in \mathcal{UN}(Q)$ if and only if for all $D \in \mathbb{N}_0^\ell$ and $B \in \mathbb{N}_{\ge 2}^\ell$ with $P_{D,B} \neq 0$ we have

\begin{equation}\label{UniformNormalityForDGBSEquation}
    \lim_{n\rightarrow\infty}\frac{N_n^Q(D,B,z)}{n} = P_{D,B} = \lim_{n\rightarrow\infty}\frac{Q_n(D,B)}{n}.
\end{equation}
\end{remark}

It is worth remarking here that $P_{D,B} \neq 0$ if and only if $D < B$ and $\mu(E_B) > 0$.
In particular, $Q_n(D,B) \neq 0$ for some $n$, then $P_{D,B} \neq 0$. Our next goal will be to obtain dynamical reformulations of Equations \eqref{NormalityForDGBSEquation} and \eqref{UniformNormalityForDGBSEquation}.

Given a block of digits $D = (d_1,\cdots,d_\ell) \in \mathbb{N}_0^\ell$ and a block of bases $B = (b_1,\cdots,b_\ell) \in \mathbb{N}_{\ge 2}^\ell$ satisfying $D < B$, we define

\begin{alignat*}{2}
    I_{D,B}& = \Bigg[\displaystyle\sum_{i = 1}^\ell\frac{d_i}{b_1\cdots b_i},\displaystyle\sum_{i = 1}^\ell\frac{d_i}{b_1\cdots b_i}+\frac{1}{b_1\cdots b_\ell}\Bigg)\text{, and}\\
    E_{D,B}& = T(E_B)\times I_{D,B}.
\end{alignat*}
We observe that for $z = 0.z_1z_2\cdots z_n\cdots_Q$ and $n \in \mathbb{N}$ we have $((z_n,\cdots,z_{n+\ell-1}),(q_n,\cdots,q_{n+\ell-1})) = (D,B)$ if and only if $(T\rtimes M)^n(x,z) \in E_{D,B}$. 

We will now examine a concrete example. Consider the basic sequence $Q$ generated by $([0,1),\mathscr{L}\allowbreak,\times2,m,f,x)$ where $f(y) = 2$ if $y < \frac{1}{2}$ and $f(y) = 3$ if $y \ge \frac{1}{2}$,\footnote{We see that we would have to work with $X = \{0,1\}^\mathbb{N}$ and the left shift map in order to have a continuous $f$, but we choose to work with $X = [0,1)$ in order to create Figures 1-4.} and $x$ is normal base $2$. 
We see that $E_2 = [0,\frac{1}{2})$ and $E_3 = [\frac{1}{2},1)$, and that for any $\ell \in \mathbb{N}$ and any $B \in \{2,3\}^\ell$ we have $E_B = [\frac{a}{2^\ell},\frac{a+1}{2^\ell})$ for some $0 \le a < 2^\ell$. 
The next 3 figures show the sets $E_B\times I_{D,B}$ as subsets of $[0,1)^2$ for all $D$ and $B$ with $B \in \{2,3\}^\ell$ and $1 \le \ell \le 3$. 
In these figures, the $x$-axis represents $[0,1)$, the sets $E_B$ are represented by $B$, and the sets $E_B\times I_{D,B}$ are the rectangles marked with $D$ that are over the set $E_B$.

\begin{figure}[!htb]
   \begin{minipage}{0.48\textwidth}
     \centering
     \includegraphics[width=\linewidth]{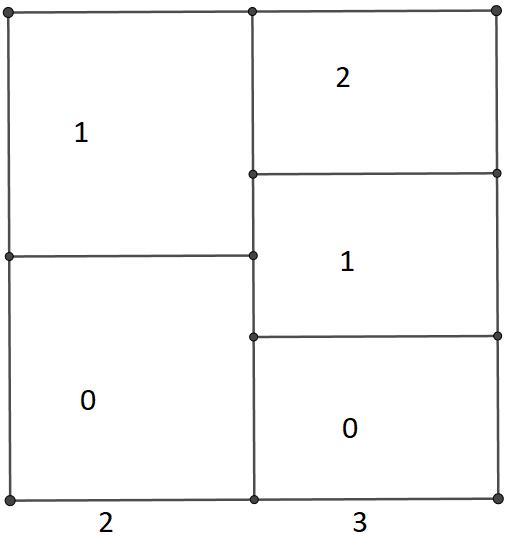}
     \caption{$E_B$ and $E_B\times I_{D,B}$ when $\ell = 1$.}\label{Level1Grid}
   \end{minipage}\hfill
   \begin{minipage}{0.48\textwidth}
     \centering
     \includegraphics[width=\linewidth]{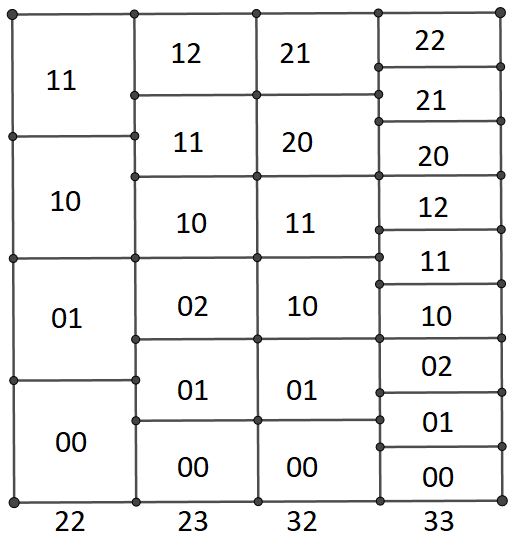}
     \caption{$E_B$ and $E_B\times I_{D,B}$ when $\ell = 2$.}\label{Level2Grid}
   \end{minipage}
\end{figure}

\pagebreak
\textcolor{white}{a}
\begin{figure}[H]
    \centering
    \includegraphics[width=\textwidth]{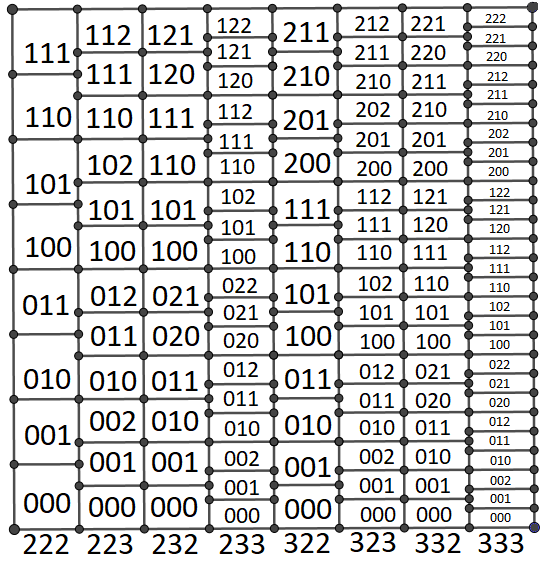}
    \caption{$E_B$ and $E_B\times I_{D,B}$ when $\ell = 3$.}
    \label{Level3Grid}
\end{figure}

\pagebreak
\textcolor{white}{a}
\begin{figure}[H]
    \centering
    \includegraphics[width=\textwidth]{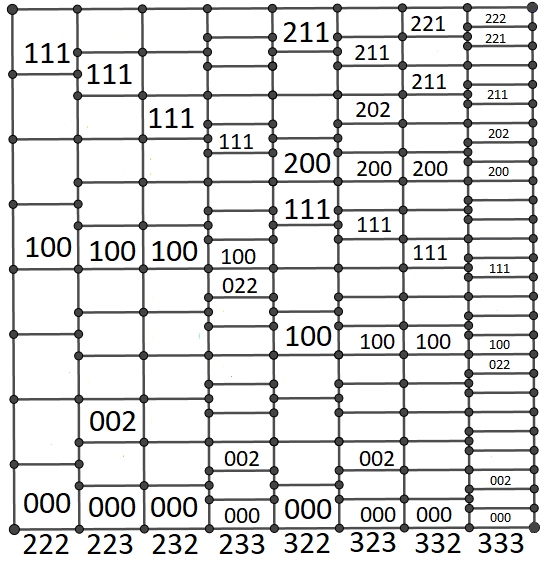}
    \caption{$E_B$ and $E_B\times I_{D,B}$ for selected values of $D$ when $\ell = 3$.}
    \label{PartiallyFilledLevel3Grid}
\end{figure}

\pagebreak
We observe that for any block of digits $D \in \mathbb{N}_0^\ell$, we have $P_D = \mu\times m(S_D)$, where $S_D := \bigcup_{B \in \mathbb{N}_{\ge 2}^\ell}E_{D,B}$. The reader is encouraged to revisit figures 1,2, and 4 in to see examples of the sets $S_D$. It is seen that the sets $S_D$ can have a wide variety of shapes, and it is not currently clear what structure, if any, they possess. Similarly, for any block of digits $D \in \mathbb{N}_0^\ell$ and any block of bases $B \in \mathbb{N}_{\ge 2}^\ell$ we have $P_{D,B} = \mu\times m(E_{D,B})$. We are now ready to restate the characterizations of $\mathcal{N}(Q)$ and $\mathcal{UN}(Q)$ given by Equations \eqref{NormalityForDGBSEquation} and \eqref{UniformNormalityForDGBSEquation} 
 respectively. 

 \begin{theorem}\label{SkewProductCharacterizationsOfNormalities}
     Let $Q$ be a basic sequence generated by $(X,\mathscr{B},\mu,T,f,x)$ and let $z \in [0,1)$.
     \begin{enumerate}[(i)]
         \item $z \in \mathcal{N}(Q)$ if and only if for any block of digits $D \in \mathbb{N}_0^\ell$ we have

         \begin{equation}
             \mu\times m(S_D) = \lim_{N\rightarrow\infty}\frac{1}{N}\sum_{n = 1}^N\mathbbm{1}_{S_D}\left((T\rtimes M)^n(x,z)\right).
         \end{equation}

         \item $z \in \mathcal{UN}(Q)$ if and only if for any block of digits $D \in \mathbb{N}_0^\ell$ and any block of bases $B \in \mathbb{N}_{\ge 2}^\ell$ we have

         \begin{equation}
             \mu\times m(E_{D,B}) = \lim_{N\rightarrow\infty}\frac{1}{N}\sum_{n = 1}^N\mathbbm{1}_{E_{D,B}}\left((T\rtimes M)^n(x,z)\right).
         \end{equation}

         \item $z \in \mathcal{DN}(Q)$ if and only if for any $0 \le a < b \le 1$ we have

         \begin{equation}
             b-a = \lim_{N\rightarrow\infty}\frac{1}{N}\sum_{n = 1}^N\mathbbm{1}_{X\times(a,b)}((T\rtimes M)^n(x,z)).
         \end{equation}
     \end{enumerate}
 \end{theorem}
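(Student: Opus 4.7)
All three equivalences will be proved from the single iteration identity
\[
(T \rtimes M)^n(x, z) \;=\; \bigl(T^n x,\ q_n q_{n-1} \cdots q_1 z \bmod 1\bigr),
\]
which follows by an immediate induction from $T \rtimes M(x,z) = (Tx, f(x)z \bmod 1)$ together with $q_k = f(T^k x)$. Once this identity is in hand, the strategy is to match the orbital averages on the right-hand sides of (i)--(iii) with the block-counting quantities appearing in the definitions of $\mathcal{N}(Q)$, $\mathcal{UN}(Q)$ and $\mathcal{DN}(Q)$, using Lemma~\ref{LimitsExistForDGBS} to identify $P_D = \mu \times m(S_D)$ and $P_{D,B} = \mu \times m(E_{D,B})$.

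For part (iii), the iteration identity gives $\mathbbm{1}_{X \times (a,b)}((T\rtimes M)^n(x,z)) = \mathbbm{1}_{(a,b)}(q_n \cdots q_1 z \bmod 1)$, so the displayed condition is exactly the statement that $(q_n\cdots q_1 z \bmod 1)_n$ is equidistributed on every subinterval of $[0,1)$. By the standard Portmanteau/Weyl characterization of uniform distribution mod $1$, this is equivalent to $z \in \mathcal{DN}(Q)$. For part (ii), I would verify the observation stated just before the theorem by unpacking $T^n x \in T(E_B)$ against the definition of $E_B$ and by expanding $q_n \cdots q_1 z \bmod 1$ as a Cantor series in the shifted basic sequence to match the interval $I_{D,B}$. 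This yields
\[
N_n^Q(D,B,z) \;=\; \sum_{j=1}^n \mathbbm{1}_{E_{D,B}}\bigl((T\rtimes M)^j (x,z)\bigr) \;+\; O(1),
\]
the $O(1)$ absorbing boundary/indexing effects. Combined with $P_{D,B} = \mu \times m(E_{D,B})$ and Equation~\eqref{UniformNormalityForDGBSEquation}, this converts membership in $\mathcal{UN}(Q)$ into the Cesàro statement in both directions. The degenerate case $P_{D,B} = 0$ is immediate since it forces either $D \not< B$ or $\mu(E_B) = 0$; in either case $N_n^Q(D,B,z)$ vanishes identically, the second case by Lemma~\ref{WhenIsASequenceDynamicallyGenerated}(ii), which ensures that $B$ never appears in $Q$.

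For part (i), I use $S_D = \bigsqcup_{B \in \mathbb{N}_{\ge 2}^\ell} E_{D,B}$ (a countable disjoint union) to sum the identity from part (ii) over $B$, obtaining $N_n^Q(D,z) = \sum_{j=1}^n \mathbbm{1}_{S_D}((T\rtimes M)^j(x,z)) + O(1)$. Applying Lemma~\ref{LimitsExistForDGBS}(iv) identifies $\mu \times m(S_D) = \sum_B P_{D,B} = P_D$, and Equation~\eqref{NormalityForDGBSEquation} then yields the equivalence exactly as in (ii); when $P_D = 0$ each $P_{D,B} = 0$, so $N_n^Q(D,z) \equiv 0$ and both sides are trivially $0$. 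The main --- and really the only non-mechanical --- step in the whole argument is confirming the symbolic-to-geometric dictionary in part (ii): getting the indexing right between the iteration count $n$ for the skew product, the offset by which $T(E_B)$ encodes the base block at positions $n, \ldots, n+\ell-1$, and the digit positions that $I_{D,B}$ picks out from the Cantor series expansion of $q_n \cdots q_1 z \bmod 1$. Once that correspondence is recorded, (i), (ii) and (iii) each follow by direct inspection.
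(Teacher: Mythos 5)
Your proposal is correct and follows exactly the route the paper intends: the paper gives no separate proof of this theorem, treating it as a restatement of Equations \eqref{NormalityForDGBSEquation} and \eqref{UniformNormalityForDGBSEquation} together with the observations immediately preceding it (namely that $(T\rtimes M)^n(x,z)\in E_{D,B}$ encodes the simultaneous occurrence of $(D,B)$ at position $n$, that $P_{D,B}=\mu\times m(E_{D,B})$ and $P_D=\mu\times m(S_D)$, and the definition of $\mathcal{DN}(Q)$ for part (iii)). Your explicit handling of the degenerate cases $P_D=0$ and $P_{D,B}=0$ via the full-support assumption, and your flagging of the off-by-one indexing as an $O(1)$ effect that washes out in the Cesàro limit, are both consistent with the paper's setup.
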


\begin{proof}[Proof of Theorem~\ref{UN=UDN}]
Let $Q$ be generated by $(X,\mathscr{B},\mu,T,f,x)$. 
We will first show that $\mathcal{UDN}(Q) \subseteq \mathcal{UN}(Q)$, so let $z \in \mathcal{UDN}(Q)$ be arbitrary.
Now let $\ell \in \mathbb{N}$, $D \in \mathbb{N}_0^\ell$, $B \in \mathbb{N}_{\ge 2}^\ell$, and $\epsilon > 0$ all be arbitrary. 
Let $h_1,h_2 \in C_b([0,1))$ be such that $0 \le h_1 \le \mathbbm{1}_{I_{D,B}} \le h_2$ and $\int_0^1(h_2(y)-h_1(y))dy < \epsilon$.
Since $\mathbbm{1}_{E_B} \in C(X)$, we see that $F_1(x,y) := \mathbbm{1}_B(x)h_1(y)$ and $F_2(x,y) := \mathbbm{1}_B(x)h_2(y)$ satisfy $F_1,F_2 \in C(X\times[0,1))\cap L^1(X\times[0,1),\mu\times m)$ and $0 \le F_1 \le \mathbbm{1}_{E_{D,B}} \le F_2$. 
Since $z \in \mathcal{UDN}(Q)$, we see that

\begin{alignat*}{2}
\mu(E_B)\int_0^1h_1(y)dy& = \lim_{N\rightarrow\infty}\frac{1}{N}\sum_{n = 1}^NF_1((T\rtimes M)^n(x,z)) \le \lim_{N\rightarrow\infty}\frac{1}{N}\sum_{n = 1}^N\mathbbm{1}_{E_{D,B}}((T\rtimes M)^n(x,z))\\
&\le \lim_{N\rightarrow\infty}\frac{1}{N}\sum_{n = 1}^NF_2((T\rtimes M)^n(x,z)) = \mu(E_B)\int_0^1h_2(y)dy\text{, hence}\\
&\left|\lim_{N\rightarrow\infty}\frac{1}{N}\sum_{n = 1}^N\mathbbm{1}_{E_{D,B}}((T\rtimes M)^n(x,z))-\mu(E_{D,B})\right| < \epsilon\mu(E_B).
\end{alignat*}
Since $D,B,$ and $\epsilon$ were all arbitrary, Theorem \ref{SkewProductCharacterizationsOfNormalities}(ii) tells us that $z \in \mathcal{UN}(Q)$.

Now we will show that $\mathcal{UN}(Q) \subseteq \mathcal{UDN}(Q)$, so let $z \in \mathcal{UN}(Q)$ be arbitrary. 
Since the linear span of functions of the form $F(x,y) = g(x)h(y)$ with $g \in C_b(X)\cap L^1(X,\mu)$ and $h \in C_b([0,1))$ are dense in $C(X\times[0,1))\cap L^1(X\times[0,1),\mu\times m)$ with respect to the supremum norm, we see that $z \in \mathcal{UDN}(Q)$ if and only if

\begin{equation}
    \lim_{N\rightarrow\infty}\frac{1}{N}\sum_{n = 1}^NF((T\rtimes M)^n(x,z)) = \int_{X\times[0,1)}Fd\mu\times m,
\end{equation}
for all $F$ of the given form. 
Now let us fix some $g \in C_b(X)\cap L^1(X,\mu), h \in C_b([0,1)),$ and $\epsilon > 0$. 
Since $h$ is uniformly continuous, there exists a $\delta > 0$ such that for any partition $P = \{P_i\}_{i = 1}^{L}$ of $[0,1)$ into intervals of equal length with $L \ge \delta^{-1}$, there exists a step function $h' = \sum_{i = 1}^L c_i\mathbbm{1}_{P_i}$ for which $||h-h'||_\infty < \epsilon$. 
Now let $\ell = \lceil-\log_2(\delta)\rceil$, and observe that for $B = (b_1,\cdots,b_\ell) \in \mathbb{N}_{\ge 2}^\ell$ we have $m(I_{D,B}) = (b_1b_2\cdots b_\ell)^{-1} \le 2^{-\ell} \le \delta$. 
Since $\{E_B\ |\ B \in \mathbb{N}_{\ge 2}^\ell\}$ is a clopen cover of $X$, let $\{E_B\}_{B \in \mathcal{B}}$ be a finite collection for which $g' := g\mathbbm{1}_{X_B}$ with $X_{\mathcal{B}} = \bigcup_{B \in \mathcal{B}}E_B$ satisfies, $||g-g'||_\infty < \epsilon$. 
Observe that if $B_1,B_2$ are two blocks of bases for which $B_1$ is an initial segment of $B_2$, then $E_{B_2} \subseteq E_{B_1}$.
Next, observe that $\mathcal{A} := \{E_B\ |\ \ell \in \mathbb{N},\ B \in \mathbb{N}_{\ge 2}^\ell\ \&\ B\text{ extends some }B' \in \mathcal{B}\}$ is a basis of clopen sets for the topology of $X_{\mathcal{B}}$.
Now let $\mathcal{B}_1$ be a collection of blocks $B$ that each extend some $B' \in \mathcal{B}$ for which we have $||g'-\sum_{B \in \mathcal{B}_1}c_B\mathbbm{1}_{E_B}||_\infty < \epsilon$.
We see that for any $B \in \mathcal{B}_1$ and any block of digits $D < B$, we still have $|I_{D,B}| < \delta$, so we may pick $c_{D,B}$ such that $||h-\sum_{D < B}c_{D,B}\mathbbm{1}_{I_{D,B}}||_\infty < \epsilon$.
It follows that 

\begin{alignat}{2}
    &||g'(x)h(y)-\sum_{B \in \mathcal{B}'}\sum_{D < B}c_{D,B}\mathbbm{1}_{E_{D,B}}(x,y)||_\infty\\
    \le&||g'(x)h(y)-h(y)\sum_{B \in \mathcal{B}'}\mathbbm{1}_{E_B}(x)||_\infty+||h(y)\sum_{B \in \mathcal{B}'}\mathbbm{1}_{E_B}(x)-\sum_{B \in \mathcal{B}'}\sum_{D < B}c_{D,B}\mathbbm{1}_{E_{D,B}}(x,y)||_\infty\\
    \le&||h||_\infty||g-\sum_{B \in \mathcal{B}'}\mathbbm{1}_{E_B}(x)||_\infty+||\sum_{B \in \mathcal{B}'}\mathbbm{1}_{E_B}(x)\left(h(y)-\sum_{D < B}c_{D,B}\mathbbm{1}_{I_{D,B}}(y)\right)||_\infty \le 2\epsilon||h||_\infty+\epsilon.
\end{alignat}
Since $z \in \mathcal{UN}(Q)$, Theorem \ref{SkewProductCharacterizationsOfNormalities}(ii) tells us that for any $B \in \mathcal{B}'$ and $D < B$ we have

\begin{alignat}{2}
    &\lim_{N\rightarrow\infty}\frac{1}{N}\sum_{n = 1}^N\mathbbm{1}_{E_{D,B}}((T\rtimes M)^n(x,z)) = \mu\times m(E_{D,B})\text{, hence}\\
    &\left|\lim_{N\rightarrow\infty}\frac{1}{N}\sum_{n = 1}^NF((T\rtimes M)^n(x,z))-\int_{X\times[0,1)}Fd\mu\times m\right|\\
    \le&||F(x,y)-g'(x)h(y)||_\infty+||g'(x)h(y)-\sum_{B \in \mathcal{B}'}\sum_{D < B}c_{D,B}\mathbbm{1}_{E_{D,B}}||_\infty+||F(x,y)-g'(x)h(y)||_1\\
    &+\left|\lim_{N\rightarrow\infty}\frac{1}{N}\sum_{n = 1}^N\sum_{B \in \mathcal{B}'}\sum_{D < B}c_{D,B}\mathbbm{1}_{E_{D,B}}((T\rtimes M)^n(x,z))-\int_{X\times[0,1)}g'(x)h(y)d\mu\times m(x,y)\right|\\
    =&\epsilon+4\epsilon||h||_\infty+||\sum_{B \in \mathcal{B}'}\sum_{D < B}c_{D,B}\mathbbm{1}_{E_{D,B}}(x,y)-g'(x)h(y)||_1 < 2\epsilon+5\epsilon||h||_\infty.
\end{alignat}
\end{proof}

Next, we must define a key concept that we will need to  compare $\NQ$ and $\DNQ$. 
Our definition of determinism is motivated by \cite[Lemma 8.9]{SingleOrbitDynamics}.

\begin{definition}
    For a set $A \subseteq \mathbb{N}$, the \textbf{natural upper density} of $A$ is 
    \begin{equation}\label{NaturalUpperDensityEquation}
        \overline{d}(A) := \limsup_{n\rightarrow\infty}\frac{|A\cap[1,N]|}{N}.
    \end{equation}
    If the $\limsup$ in Equation \eqref{NaturalUpperDensityEquation} is a limit, then it is denoted by $d(A)$ and called the \textbf{natural density} of $A$.
    Given a sequence $\omega = (\omega_i)_{i = 1}^\infty \in \mathbb{N}_{\ge 2}^{\mathbb{N}}$, a length $k$, and a set $A \subseteq \mathbb{N}$, let

    \begin{equation}
        B_k(\omega;A) = ([\omega_i,\omega_{i+1},\cdots,\omega_{i+k-1}])_{i \in \mathbb{N}\setminus A},
    \end{equation}
    and let $p_{\epsilon}(k) = p_{\epsilon}(k,\omega)$ denote the (potentially infinite) infimum of the number of distinct elements in $B_k(\omega;A)$ as $A$ runs over all subsets of $\mathbb{N}$ with natural upper density at most $\epsilon$. The sequence $\omega$ is \textbf{deterministic} if the following conditions are satisfied:
    \begin{enumerate}[(i)]
        \item For every $\epsilon > 0$ there exists a $k_0 \in \mathbb{N}$ such that for all $k \ge k_0$ we have

    \begin{equation}
        \frac{\log(p_\epsilon(k,\omega))}{k} < \epsilon.
    \end{equation}

        \item For $E_n := \{i \in \mathbb{N}\ |\ \omega_i = n\}$ we have

        \begin{equation}
            \sum_{n = 2}^\infty-\overline{d}\left(E_n\right)\log\left(\overline{d}(E_n)\right) < \infty.
        \end{equation}
    \end{enumerate}
\end{definition}

We have the following useful characterization of deterministic basic sequences.

\begin{lemma}\label{DeterministicSystemsProduceDeterministicSequences}
    Let $Q = (q_n)_{n = 1}^\infty \in \mathbb{N}_{\ge 2}^\mathbb{N}$ be a basic sequence generated by $(X,\mathscr{B},\mu,T,f,x)$ with $h_\mu\left(\left\{f^{-1}(i)\right\}_{i = 2}^\infty\right) < \infty$. $Q$ is deterministic if and only if $\mathcal{X} := (X,\mathscr{B},\mu,T)$ has zero entropy.
\end{lemma}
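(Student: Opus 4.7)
The plan is to treat conditions (i) and (ii) of determinism separately. Condition (ii) is almost immediate: since $f$ is continuous and $\mathbb{N}_{\ge 2}$ is discrete, each $f^{-1}(n)$ is clopen in $X$, so $\mathbbm{1}_{f^{-1}(n)} \in C(X)$, and genericity of $x$ forces $d(E_n) = \mu(f^{-1}(n))$. The sum in condition (ii) is then exactly $H_\mu(\alpha)$ with $\alpha := \{f^{-1}(n)\}_{n \ge 2}$, which is finite by hypothesis. For condition (i), I will show it is equivalent to $h_\mu(T) = 0$. Because $f$ generates the topology of $X$, $\alpha$ is a (countable, finite-entropy) Borel-mod-null generator for $T$, and the generator theorem for countable partitions of finite entropy gives $h_\mu(T) = h_\mu(T,\alpha)$. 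The atoms of $\alpha_0^{k-1} := \bigvee_{j=0}^{k-1} T^{-j}\alpha$ are exactly the sets $E_w$, and by genericity of $x$ the position sets $S_w := \{i : (q_i,\ldots,q_{i+k-1}) = w\}$ satisfy $d(S_w) = \mu(E_w)$ and are pairwise disjoint.

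For the direction $h_\mu(T,\alpha) = 0 \Longrightarrow$ (i), I would fix $\epsilon > 0$ and set $\mathcal{G}_k := \{w : \mu(E_w) \ge e^{-\epsilon k/2}\}$, so that $|\mathcal{G}_k| \le e^{\epsilon k/2}$. The elementary estimate
\[
H_\mu(\alpha_0^{k-1}) \ge \sum_{w \notin \mathcal{G}_k} -\mu(E_w)\log\mu(E_w) \ge \frac{\epsilon k}{2}\,\mu\!\Bigl(\bigcup_{w \notin \mathcal{G}_k} E_w\Bigr),
\]
combined with $k^{-1}H_\mu(\alpha_0^{k-1}) \to 0$, forces $\mu(\bigcup_{w \notin \mathcal{G}_k} E_w) < \epsilon$ for large $k$. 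Since $\mathcal{G}_k$ is finite, the set $\bigcup_{w \notin \mathcal{G}_k} E_w$ is clopen, and genericity of $x$ yields $\overline{d}(A_k) < \epsilon$ where $A_k := \{i : (q_i,\ldots,q_{i+k-1}) \notin \mathcal{G}_k\}$. The distinct length-$k$ blocks of $Q$ occurring outside $A_k$ all lie in $\mathcal{G}_k$, so $p_\epsilon(k) \le e^{\epsilon k/2}$, whence $k^{-1}\log p_\epsilon(k) < \epsilon$ for all large $k$.

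For the converse, suppose $h := h_\mu(T,\alpha) > 0$. I plan to invoke the Shannon--McMillan--Breiman theorem for countable partitions of finite entropy (in the possibly non-ergodic setting, where the limit is the conditional entropy $h_\mu(T,\alpha\,|\,\mathcal{I})$, integrating to $h$) together with Egorov's theorem to extract constants $c,\beta > 0$ and $K$ with $\mu(\bigcup_{w \in \mathcal{S}_k} E_w) \ge \beta$ for every $k \ge K$, where $\mathcal{S}_k := \{w : \mu(E_w) \le e^{-ck}\}$. Fix $\epsilon \in (0,\min(c/2,\beta/2))$ and $A \subseteq \mathbb{N}$ with $\overline{d}(A) \le \epsilon$. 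Disjointness of $\{S_w\}_w$ and $d(S_w) = \mu(E_w)$ imply that blocks $w$ with $S_w \subseteq A$ carry total mass at most $\epsilon$. Hence blocks of $\mathcal{S}_k$ visible outside $A$ carry mass at least $\beta/2$, and since each has mass at most $e^{-ck}$, there are at least $(\beta/2)e^{ck}$ of them. Thus $p_\epsilon(k) \ge (\beta/2)e^{ck}$, so $k^{-1}\log p_\epsilon(k) \to c > \epsilon$, contradicting condition (i).

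The main obstacle I anticipate is the converse direction in the presence of non-ergodicity, since classical Shannon--McMillan--Breiman is often stated for ergodic systems. The $L^1$ and a.e.\ version for countable partitions of finite entropy in the general setting is what is needed, and Egorov delivers the uniform exponential decay on a positive-measure set required by the counting argument. The link back to the combinatorics is effected purely through the genericity of $x$ and the disjointness of the position sets $\{S_w\}_w$.
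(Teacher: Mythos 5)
Your proof is correct, but it routes the two implications through different tools than the paper does — in effect the two arguments are mirror images of each other. For the direction ``zero entropy $\Rightarrow$ deterministic'', the paper invokes the Shannon--McMillan--Breiman theorem (after passing to the ergodic decomposition) to get $\frac{1}{k}I_\mu(\xi^k)\to 0$ in measure and then counts the cells of measure at least $e^{-k\epsilon}$; your Markov-type inequality $H_\mu(\alpha_0^{k-1}) \ge \frac{\epsilon k}{2}\,\mu(\bigcup_{w\notin\mathcal{G}_k}E_w)$ extracts the same conclusion directly from $\frac{1}{k}H_\mu(\alpha_0^{k-1})\to 0$, which is more elementary and sidesteps both SMB and the ergodic decomposition here. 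For the converse, the paper argues directly: it truncates $\xi$ to a finite partition $\eta$ with $H_\mu(\xi\,|\,\eta)<\epsilon$, uses determinism to show most of the mass of $\eta^k$ sits on at most $e^{k\epsilon_2}$ cells, and bounds $H(\eta^k)$ by convexity of $-x\log x$; you instead argue by contraposition, using SMB for countable finite-entropy partitions in the non-ergodic setting plus Egorov to produce at least $(\beta/2)e^{ck}$ cylinders of mass at most $e^{-ck}$ that must remain visible off any set of density $\epsilon<\min(c/2,\beta/2)$, forcing $p_\epsilon(k)$ to grow exponentially. Both converses are valid; the paper's buys elementarity (only static entropy estimates and the truncation trick for the countable alphabet), while yours buys a cleaner quantitative statement (positive entropy yields an explicit exponential lower bound on $p_\epsilon(k)$) at the cost of the heavier non-ergodic SMB machinery. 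Your treatment of condition (ii) as $H_\mu(\alpha)<\infty$ via genericity, and your appeal to the Kolmogorov--Sinai generator theorem in place of the paper's reduction to the canonical symbolic model, are both sound and match the paper's intent.
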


\begin{proof}
    For the first direction, let us assume that $\mathcal{X}$ has zero entropy. Let $\epsilon > 0$ be arbitrary. Let $\xi = \left\{f^{-1}(i)\right\}_{i = 2}^\infty$ be the finite entropy partition of $Y$ induced by $f$.
    Let $\xi^n = \bigvee_{i = 0}^{n-1}T^{-i}\xi$ and let $I_\mu(\xi^n)(x') = -\log(\mu(\xi^n_i))$, where $\xi^n_i$ is the cell of $\xi^n$ containing $x'$.
    Since $\mathcal{X}$ has zero entropy, we may assume without loss of generality that all systems in the ergodic decomposition of $\mathcal{X}$ also have zero entropy, so the Shannon-McMillan-Breiman Theorem tells us that $\frac{1}{n}I_\mu(\xi^n)$ converges in measure to $0$ as $n\rightarrow\infty$.
    Let $N_0 \in \mathbb{N}$ be such that for all $N \ge N_0$ we have $\frac{1}{N}I_\mu\left(\xi^N\right) < \epsilon$ on a set $A' = A'(N)$ of measure at least $1-\epsilon$. 
    We see that if $\xi^N_i$ is a cell of $\xi^N$ for which $-\log\left(\mu\left(\xi^N_i\right)\right) < N\epsilon$, then $\mu\left(\xi^N_i\right) > e^{-N\epsilon}$.
    Since $\sum_i\mu\left(\xi^N_i\right) = 1$, we see that there are at most $e^{N\epsilon}$ values of $i$ for which $-\log\left(\mu\left(\xi^N_i\right)\right) < N\epsilon$. 
    Now consider $A := \{n \in \mathbb{N}\ |\ T^nx \notin A'\}$. 
    Since $x$ is a generic point for $f$, we see that $d(A) = \mu\left(X\setminus A'\right) < \epsilon$.
    Furthermore, we see that the only words $w := [w_1,w_2,\cdots,w_N]$ appearing in $B_N(\omega;A)$ are those for which

    \begin{equation}
        \xi^N_w := \bigcap_{i = 0}^{N-1}T^{-i}f^{-1}(w_{i+1}) = E_w
    \end{equation}
    satisfies $-\log\left(\mu\left(\xi^N_w\right)\right) < N\epsilon$, hence $\log(p_\epsilon(N,\omega)) < N\epsilon$, so $Q$ satisfies condition (i) of being deterministic. 
    To see that $Q$ also satisfies condition (ii), it suffices to observe that

    \begin{equation}
        d(E_n) = \lim_{N\rightarrow\infty}\frac{1}{N}\left|\left\{1 \le i \le N\ |\ f\left(T^iy\right) = n\}| = \mu\left(f^{-1}(\{m\})\right)\right\}\right|.
    \end{equation}
    
    For the next direction, let us assume that $Q$ is deterministic. 
    It suffices to show that the system $\mathcal{X}$ constructed in the proof of Theorem \ref{WhenIsASequenceDynamicallyGenerated} has zero entropy, so we will also continue using the same notation. Let $\epsilon > 0$ be arbitrary.
    Let $\xi = \left\{f^{-1}(i)\right\}_{i = 2}^\infty = \left\{C_{[i]}\right\}_{i = 2}^\infty$ and observe that $\xi^{n+1} = \bigvee_{i = 0}^nT^{-i}\xi = \{C_w\}_{w \in \mathbb{N}_{\ge 2}^{n+1}}$. 
    Let $\xi(n) = \left\{f^{-1}(i)\right\}_{i = 2}^n\cup\left\{f^{-1}(\mathbb{N}_{> n})\right\}$, and let $N \in \mathbb{N}$ be such that for $\eta = \xi(N)$ we have $H_\mu(\xi|\eta) < \epsilon$.
    Since $\xi$ is a generating partition for $\mathscr{B}$, we see that
    \begin{equation}
        h_\mu(T) = h_\mu(T,\xi) \le h_\mu(T,\eta)+H_\mu(\xi|\eta) < h_\mu(T,\eta)+\epsilon,
    \end{equation}
    so it suffices to show that $h_\mu(T,\eta) = 0$. 
    Let $\omega_i = q_i$ if $q_i \le N$ and $\omega_i = N+1$ otherwise. Since $B_k(\omega;A) \le B_k(Q;A)$ for all $A \subseteq \mathbb{N}$, we see that $\omega$ is a deterministic sequence. 
    For $2 \le i \le N$, let $C_{[i]}' = C_{[i]}$, let $C_{[N+1]}' = \bigcup_{i = N+1}^\infty C_{[i]}$, and for $w = [w_1,w_2,\cdots,w_k] \in [2,N+1]^k$, let

    \begin{equation}
        C_w' = \bigcap_{i = 1}^kC_{[w_i]}'.
    \end{equation}
    We see that $\eta^n = \bigvee_{i = 0}^{n-1}T^{-i}\eta = \left\{C_w'\right\}_{w \in [2,N+1]^n}$. 
    Let $\epsilon_2 > 0$ be arbitrary and let $k_0 \in \mathbb{N}$ be such that for $k \ge k_0$ we have $|B_k(\omega;A_k)| < e^{k\epsilon_2}$ for some $A_k \subseteq \mathbb{N}$ with $\overline{d}(A_k) < \epsilon_2$. 
    We see that

    \begin{alignat*}{2}
        &H\left(\eta^k\right) = \sum_{w \in [2,N+1]^k}-\mu\left(C_w'\right)\log\left(\mu\left(C_w'\right)\right)\\
        = &\sum_{w \in B_k(\omega;A)}-\mu\left(C_w'\right)\log\left(\mu\left(C_w'\right)\right)+\sum_{w \in B_k(\omega;A)^c}-\mu\left(C_w'\right)\log\left(\mu\left(C_w'\right)\right).
    \end{alignat*}
    Since $\mu(C_w') = d(\{n \in \mathbb{N}\ |\ (\omega_n,\omega_{n+1},\cdots,\omega_{n+k-1}) = w\})$, we see that
    \begin{equation}
        B = B(k) := \sum_{w \in B_k(\omega;A)}\mu(C_w') > 1-\epsilon_2.
    \end{equation}
    Recalling that $x\mapsto -x\log(x)$ is convex, we see that

    \begin{alignat*}{2}
        H\left(\eta^k\right) &\le B\log\left(\frac{|B_k(\omega;A)|}{B}\right)+(1-B)\log\left(\frac{(N+1)^k-|B_k(\omega;A)|}{1-B}\right)\\
        &\le B\log\left(\frac{e^{k\epsilon_2}}{B}\right)+(1-B)\log\left(\frac{(N+1)^k}{1-B}\right)\text{, hence}\\
        h_\mu(T,\eta) &= \liminf_{k\rightarrow\infty}\frac{1}{k}H\left(\eta^k\right) \le \liminf_{k\rightarrow\infty}\frac{1}{k}\left(B(k)\log\left(\frac{e^{k\epsilon_2}}{B(k)}\right)+(1-B(k))\log\left(\frac{(N+1)^k}{1-B(k)}\right)\right)\\
        &\le \epsilon_2+\lim_{B(k)\rightarrow1}\left((1-B(k))\log(N+1)-\frac{1}{k}(1-B(k))\log(1-B(k))\right) = \epsilon_2. 
    \end{alignat*}
\end{proof}

\begin{theorem}\label{MeasurablyProducingDGBS}
    If $\mathcal{Y} = (Y,\mathscr{A},\nu,S)$ is a m.p.s., $f:Y\rightarrow\mathbb{N}_{\ge 2}$ is measurable, and $y \in Y$ is a generic point for $f$, then the basic sequence $Q = (q_n)_{n = 1}^\infty$ given by $q_n = f(T^ny)$ is a dynamically generated basic sequence. Furthermore, if $h_\nu(\{f^{-1}(n)\}_{n = 2}^\infty) < \infty$ and $\mathcal{Y}$ has zero entropy, then $Q$ is deterministic.
\end{theorem}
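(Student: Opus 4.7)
The plan is to reduce the two assertions to the structural results already proven in the paper: Lemma \ref{WhenIsASequenceDynamicallyGenerated} for dynamical generation, and Lemma \ref{DeterministicSystemsProduceDeterministicSequences} for the deterministic conclusion. The key observation driving both halves is that, under the generic-point hypothesis, empirical frequencies of words in $Q$ are controlled by $\nu$-measures of explicit cylinder-type subsets of $Y$.

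For the first assertion, I would verify conditions (i)--(iii) of Lemma \ref{WhenIsASequenceDynamicallyGenerated}. For each word $w = (w_1,\ldots,w_k) \in \mathbb{N}_{\ge 2}^k$, set $E_w^\mathcal{Y} := \bigcap_{i=1}^k S^{-i} f^{-1}(w_i)$. The equality $(q_{n+1},\ldots, q_{n+k}) = w$ is equivalent to $S^n y \in E_w^\mathcal{Y}$, so the generic-point hypothesis immediately yields $d(w) = \nu(E_w^\mathcal{Y})$ (any index shift between this and the definition in the lemma is absorbed by the averaging), giving condition (i). Since $f$ takes values in $\mathbb{N}_{\ge 2}$, the family $\{E_w^\mathcal{Y} : w \in \mathbb{N}_{\ge 2}^k\}$ partitions $Y$ for each fixed $k$, and summing the measures yields condition (iii). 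For condition (ii), the standing assumption that $\nu(E_w^\mathcal{Y}) > 0$ whenever $E_w^\mathcal{Y} \ne \emptyset$ contrapositively forces $E_w^\mathcal{Y} = \emptyset$ when $d(w) = 0$; but any appearance of $w$ in $Q$ would explicitly exhibit a point of $E_w^\mathcal{Y}$, a contradiction.

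For the deterministic assertion, I would transport the zero-entropy hypothesis to the c.m.p.s. built in the proof of Lemma \ref{WhenIsASequenceDynamicallyGenerated}, namely $(X,\mathscr{B},\mu,T)$ with $X = c\ell(\{T^n Q\}_{n=1}^\infty) \subseteq \mathbb{N}_{\ge 2}^\mathbb{N}$, shift $T$, empirical measure $\mu$, and first-coordinate projection $f_X$. The main tool is the map $\pi: Y \to X$ defined by $\pi(y') := (f(S^n y'))_{n=1}^\infty$, which intertwines $S$ with $T$ and sends $y$ to $Q$. To verify $\pi_* \nu = \mu$, it suffices to check agreement on cylinders, which is exactly the condition (i) computation above; equality on the full Borel $\sigma$-algebra then follows by a standard $\pi$-system argument. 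Consequently $(X, \mathscr{B}, \mu, T)$ is a measure-theoretic factor of $\mathcal{Y}$, so $h_\mu(T) \le h_\nu(S) = 0$. The partition $\{f_X^{-1}(n)\}_{n \ge 2}$ of $X$ is distributed identically to $\{f^{-1}(n)\}_{n \ge 2}$ under $\nu$ and hence inherits finite entropy. Lemma \ref{DeterministicSystemsProduceDeterministicSequences} then concludes that $Q$ is deterministic.

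The main obstacle is essentially bookkeeping: confirming that $\pi_*\nu$ coincides with the empirical measure $\mu$ produced in the construction of Lemma \ref{WhenIsASequenceDynamicallyGenerated}, rather than some other shift-invariant measure on $X$. Once this identification is in hand, the remainder is a direct application of the monotonicity of entropy under measure-theoretic factors, together with the fact that pushforward preserves the distribution of a partition.
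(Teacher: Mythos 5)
Your proof is correct. The first half (verifying conditions (i)--(iii) of Lemma \ref{WhenIsASequenceDynamicallyGenerated} via $d(w) = \nu(E_w)$, the standing positivity assumption, and the partition property) is essentially identical to the paper's argument. For the deterministic assertion, however, you take a genuinely different route: the paper simply reruns the Shannon--McMillan--Breiman argument from the first direction of Lemma \ref{DeterministicSystemsProduceDeterministicSequences} directly on the measure preserving system $\mathcal{Y}$ (which works because that direction of the lemma's proof never uses continuity of $f$ or the topology of the phase space), whereas you push everything forward to the symbolic model $X = c\ell(\{T^nQ\}_{n=1}^\infty)$ via $\pi(y') = (f(S^ny'))_{n=1}^\infty$, identify $\pi_*\nu$ with the empirical measure $\mu$ on cylinders, and then invoke monotonicity of entropy under factors together with Lemma \ref{DeterministicSystemsProduceDeterministicSequences} as a black box. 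Your route trades the re-verification that the SMB argument survives in the purely measurable setting for the (routine) check that $\pi_*\nu = \mu$; it is arguably cleaner in that it isolates exactly where the topology is needed, and it makes explicit the factor relationship between $\mathcal{Y}$ and the symbolic system that the paper leaves implicit. One small point worth stating explicitly in a final write-up: $\pi$ need not map all of $Y$ into $X$, but since $\pi_*\nu$ agrees with $\mu$ on cylinders and $\mu$ is supported on $X$, the pushforward system is measure-theoretically the system $(X,\mathscr{B},\mu,T)$, which is all the entropy comparison requires; likewise $\mu(f_X^{-1}(n)) = \nu(S^{-1}f^{-1}(n)) = \nu(f^{-1}(n))$ justifies the finite-entropy hypothesis on the partition.
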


\begin{proof}
    Let us first verify that we can apply Lemma \ref{WhenIsASequenceDynamicallyGenerated} to show that $Q$ is dynamically generated. To see that condition (i) holds, it suffices to observe that for any word $w = (w_1,w_2,\cdots,w_\ell) \in \mathbb{N}_{\ge 2}^\ell$ and $n \in \mathbb{N}$, we have
    \begin{alignat*}{2}
        &w_i = q_{n+i} = f\left(T^{n+i}y\right)\ \forall\ 1 \le i \le \ell \Leftrightarrow T^ny \in \bigcap_{i = 1}^\ell T^{-i}f^{-1}(w_i) = E_w\text{, hence}\\
        d(w) = &\lim_{N\rightarrow\infty}\frac{1}{N}\left|\left\{1 \le n \le N\ |\ w_i = q_{n+i} = f(T^{n+i}y)\ \forall\ 1 \le i < \ell\right\}\right|\\
        =&\lim_{N\rightarrow\infty}\frac{1}{N}\left|\left\{1 \le n \le N\ |\ T^ny \in E_w\right\}\right| = \nu(E_w).
    \end{alignat*}
    Condition (ii) holds due to the standing assumption that if $E_w \neq \emptyset$ then $\nu(E_w) > 0$. Condition (iii) holds due to the fact that $d(w) = \nu(E_w)$ and $\{E_w\ |\ w \in \mathbb{N}_{\ge 2}^\ell\}$ is a partition of $Y$. To show that $Q$ is deterministic when $\mathcal{Y}$ has zero entropy, we proceed as we did in the proof of Lemma \ref{DeterministicSystemsProduceDeterministicSequences}.
\end{proof}

\begin{corollary}\label{DGBSOnTori}
    Suppose that $\mathcal{X} := \left(X,\mathscr{B},\mu,T\right)$ is a c.m.p.s., $f:\mathbb{T}^d\rightarrow\mathbb{N}_{\ge 2}$ is Jordan measurable\footnote{This means that for any $n \in \mathbb{N}_{\ge 2}$, the boundary of the set $f^{-1}(\{n\})$ has 0 measure.}, and $x \in X$ is a generic point. 
    Then the sequence $Q = (q_n)_{n = 1}^\infty$ given by $q_n = f\left(T^nx\right)$ is dynamically generated.
\end{corollary}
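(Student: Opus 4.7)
The plan is to deduce the corollary from Theorem~\ref{MeasurablyProducingDGBS}. Since that theorem requires only that $x$ be a generic point for the (possibly discontinuous) $f$ in the measurable sense, it suffices to show that the genericity hypothesis on $x$ with respect to $C(X)\cap L^1(X,\mu)$ extends to the indicators $\mathbbm{1}_{E_w}$ for every block $w=(w_1,\dots,w_\ell)$. This extension is exactly the classical equidistribution-versus-Riemann-integrability step, and here the role of Riemann integrability is played by the Jordan measurability of $f$.

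First I would verify that each set $E_w = \bigcap_{i=1}^\ell T^{-i}f^{-1}(\{w_i\})$ is a $\mu$-continuity set, i.e.\ $\mu(\partial E_w)=0$. The hypothesis gives $\mu(\partial f^{-1}(\{w_i\}))=0$ for each $i$. Continuity of $T$ yields the inclusion $\partial T^{-i}(A)\subseteq T^{-i}(\partial A)$ (the preimage of a closed set is closed and of an open set is open), and since $T$ preserves $\mu$ we obtain $\mu(\partial T^{-i}f^{-1}(\{w_i\}))=0$. The standard inclusion $\partial(A\cap B)\subseteq \partial A\cup\partial B$ then gives $\mu(\partial E_w)=0$.

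Next I would, for each $\varepsilon>0$, use a Urysohn-type construction in the Polish space $X$ to produce continuous functions $\phi_\varepsilon,\psi_\varepsilon:X\to[0,1]$ with $\phi_\varepsilon\le \mathbbm{1}_{E_w}\le \psi_\varepsilon$ and $\int_X(\psi_\varepsilon-\phi_\varepsilon)\,d\mu<\varepsilon$; this is possible precisely because $\mu(\partial E_w)=0$, by thickening $\overline{E_w}$ slightly for $\psi_\varepsilon$ and eroding $\operatorname{int}(E_w)$ slightly for $\phi_\varepsilon$ using the metric distance functions. Applying the genericity of $x$ to both $\phi_\varepsilon$ and $\psi_\varepsilon$ and sending $\varepsilon\to 0$ sandwiches the Birkhoff averages of $\mathbbm{1}_{E_w}$ and forces
\[
\lim_{N\to\infty}\frac{1}{N}\sum_{n=1}^N \mathbbm{1}_{E_w}(T^n x)=\mu(E_w),
\]
which is precisely the statement that $x$ is a generic point for $f$. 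Theorem~\ref{MeasurablyProducingDGBS} then concludes that $Q=(f(T^n x))_{n=1}^\infty$ is dynamically generated.

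The main obstacle, such as it is, lies in the boundary calculus of the first step: one must be careful that continuity of $T$ is what allows $T^{-i}$ to commute appropriately with boundary-taking, and that Jordan measurability, which is stated only on the fibers $f^{-1}(\{n\})$, propagates to the cylinder-type sets $E_w$. Once this is in place, the Urysohn sandwich and appeal to Theorem~\ref{MeasurablyProducingDGBS} are entirely routine.
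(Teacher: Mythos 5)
Your proposal is correct and follows the route the paper intends: the corollary is stated immediately after Theorem~\ref{MeasurablyProducingDGBS} precisely so that one only needs to upgrade genericity from $C(X)\cap L^1(X,\mu)$ to the indicators $\mathbbm{1}_{E_w}$, which your boundary calculus ($\partial T^{-i}A\subseteq T^{-i}\partial A$, $\partial(A\cap B)\subseteq\partial A\cup\partial B$) together with the Urysohn sandwich accomplishes. This is essentially the same argument the paper leaves implicit, so there is nothing further to add.
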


While our next result will not be used later on, we record it because it is of independent interest.

\begin{theorem}\label{NormalityPreservationUnderRationalMultiplication}
    If $Q$ is a dynamically generated basic sequence generated by $(X,\mathscr{B},\mu,T,f,x)$ with $(X,\mathscr{B},\mu,T)$ having finite entropy, $\int_X\log(f)d\mu < \infty$, $\frac{r}{s}$ is a non-zero rational number, and $y \in \mathcal{UDN}(Q)$, then $\frac{r}{s}y \in \mathcal{UDN}(Q)$.
\end{theorem}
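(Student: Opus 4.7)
My plan is to reformulate the statement as a lifting of genericity to a larger skew product. Let $\tilde X := X \times [0,s)$, equip it with the map $\tilde T(x,w) := (Tx,\, f(x)w \bmod s)$ and the measure $\tilde\mu := \mu \times \tfrac{1}{s}\, m|_{[0,s)}$. A short direct computation shows that $\tilde\mu$ is $\tilde T$-invariant, that $\pi(x,w) := (x,\, w \bmod 1)$ is a measure-preserving factor map $(\tilde X,\tilde\mu,\tilde T) \to \mathcal{X}^f$, and that $\rho(x,w) := (x,\, \tfrac{r}{s}\, w \bmod 1)$ is likewise a measure-preserving factor map $(\tilde X,\tilde\mu,\tilde T) \to \mathcal{X}^f$ that intertwines $\tilde T$ with $T \rtimes M$. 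Since $\rho(x,y) = (x,\, \tfrac{r}{s}y \bmod 1)$, the orbit of $(x, \tfrac{r}{s}y)$ in $\mathcal{X}^f$ is the $\rho$-image of the $\tilde T$-orbit of $(x,y) \in \tilde X$, so it is enough to prove that $(x,y)$ is a generic point for $\tilde\mu$ under $\tilde T$.

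To establish this lifting, I would prove that $\tilde\mu$ is the unique $\tilde T$-invariant probability measure on $\tilde X$ whose $\pi$-pushforward is $\mu \times m$. Writing $\tilde X \cong (X\times[0,1)) \times \mathbb{Z}/s\mathbb{Z}$ via $w = z + k$ with $z \in [0,1)$ and $k \in \{0,1,\dots,s-1\}$, the map $\tilde T$ becomes an affine skew extension of $T \rtimes M$ by $\mathbb{Z}/s\mathbb{Z}$ with cocycle $(x,z) \mapsto (k \mapsto f(x)k + \lfloor f(x)z \rfloor \bmod s)$. Decomposing $L^2(\tilde X,\tilde\mu)$ along the characters $\chi_b(k) = e^{2\pi i b k /s}$ of the fiber, any $\tilde T$-invariant $L^2$-function in a $b \neq 0$ isotypic component produces a nontrivial eigenfunction on $X \times [0,1)$ for $T \rtimes M$ satisfying a cohomological equation associated with $\chi_b$; the finite entropy of $(X,\mathscr{B},\mu,T)$ together with $\int_X \log(f)\, d\mu < \infty$ (which together control the entropy of $T\rtimes M$ via Abramov--Rokhlin) are the inputs that rule out nontrivial such eigenfunctions and force the uniqueness of the lift.

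Once uniqueness is in hand, the lifting of genericity is soft. Any weak-$*$ accumulation point $\nu$ of the empirical averages $\tfrac{1}{N}\sum_{n=1}^N \delta_{\tilde T^n(x,y)}$ is a $\tilde T$-invariant probability on $\tilde X$, and since $y \in \mathcal{UDN}(Q) = \mathcal{UN}(Q)$ by Theorem~\ref{UN=UDN} and $\pi$ is equivariant, we obtain $\pi_*\nu = \mu \times m$; uniqueness then forces $\nu = \tilde\mu$. Since all weak-$*$ accumulation points coincide with $\tilde\mu$, the Ces\`aro averages converge to integration against $\tilde\mu$, so $(x,y)$ is generic for $\tilde\mu$ under $\tilde T$. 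Applying $\rho$ and using Riemann integrability of the functions $F \circ \rho$ for $F \in C(X\times[0,1))$ (necessary because $\rho$ is only piecewise continuous) then produces $\tfrac{r}{s}y \in \mathcal{UDN}(Q)$.

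The main obstacle, as expected, is the uniqueness of the lift. The cocycle is affine in $k$ rather than a pure translation, and when $\gcd(f(x),s) > 1$ on a set of positive $\mu$-measure the fiber maps fail to be invertible, so standard ergodicity criteria for cyclic-group extensions do not apply directly; one must work with an affine semigroup action. Excluding the potentially troublesome eigenfunctions from the fiber characters is where the finite-entropy hypothesis enters decisively, and this is plausibly the step where the discussion with Downarowicz acknowledged in the paper becomes essential.
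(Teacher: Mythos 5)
Your overall architecture --- relate the orbit of $(x,\tfrac{r}{s}y)$ to that of $(x,y)$ through an $s$-fold cover, and reduce everything to the uniqueness of a certain invariant measure --- is sound and close in spirit to the paper's argument, and your first and last paragraphs (the construction of $\tilde X$, the two factor maps $\pi$ and $\rho$, and the soft deduction of genericity from uniqueness) are fine. The problem is that the middle step, the uniqueness of the $\tilde T$-invariant lift of $\mu\times m$, is where the entire content of the theorem lives, and the mechanism you propose for it does not work. First, a character decomposition of $L^2(\tilde X,\tilde\mu)$ and an analysis of $\tilde T$-invariant functions would at best establish ergodicity of $\tilde\mu$; ergodicity of one lift does not imply it is the \emph{only} invariant measure projecting to $\mu\times m$, and since (as you note) the fiber maps are non-invertible affine maps rather than group translations, the standard dichotomy for compact group extensions that would convert ``the Haar lift is ergodic'' into ``the Haar lift is the unique lift'' is not available. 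Second, the claim that finite entropy of $(X,\mathscr{B},\mu,T)$ ``rules out nontrivial eigenfunctions'' is backwards: finite-entropy (indeed zero-entropy) systems can have abundant eigenfunctions. The hypotheses $h_\mu(T)<\infty$ and $\int_X\log f\,d\mu<\infty$ are not there to kill eigenfunctions; they are there to make the entropy of the skew product finite so that an entropy comparison is meaningful.

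The step you are missing is the one the paper's proof is built on: by the Abramov--Rokhlin formula, since Lebesgue measure is the unique measure of maximal entropy for each $M_n$, the product $\mu\times m$ is the \emph{unique} measure of maximal entropy for $T\rtimes M$ among invariant measures projecting to $\mu$ on $X$. The paper then takes $\lambda$ to be any weak$^*$ limit of the empirical measures along the orbit of $(x,\tfrac{r}{s}y)$, observes that $(w_1,w_2)\mapsto(w_1,sw_2)$ exhibits $(X\times[0,1),\mu\times m,T\rtimes M)$ as a factor of $(X\times[0,1),\lambda,T\rtimes M)$ (using $y\in\mathcal{UDN}(Q)$), concludes $h_\lambda\ge h_{\mu\times m}$, and hence $\lambda=\mu\times m$ --- no extension $\tilde X$ is needed. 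Your framework could be rescued by the same idea: a lift $\nu$ of $\mu\times m$ through the $s$-to-one map $\pi$ satisfies $h_\nu\ge h_{\mu\times m}=h_{\tilde\mu}$, and $\tilde\mu$ is again the unique measure of maximal entropy over $\mu$ by Abramov--Rokhlin applied to the fiber maps $w\mapsto f(x)w\bmod s$. But as written the decisive step is asserted rather than proved, and the tool you reach for is the wrong one.
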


\begin{proof}
    We recall that for each $n \in \mathbb{N}_{\ge 2}$, $m$ is the unique measure of maximal entropy for the map $M_n:[0,1)\rightarrow[0,1)$. 
    Consequently, the Abramov-Rokhlin formula (see \cite{NonInvertibleAbramovRokhlinFormula}) tells us that $\mu\times m$ is the unique measure of maximal entropy for $T\rtimes M$ on $X\times[0,1)$ among all measures $\lambda$ that project to $\mu$ on $X$.
    Now let $\lambda$ be any weak$^*$ limit point of the sequence $\left\{\frac{1}{N}\sum_{n = 1}^N\delta_{(T\rtimes M)^n(x,\frac{r}{s}y)}\right\}_{N = 1}^\infty$, and observe that $\lambda$ projects to $\mu$ on $X$.
    Since $y \in \mathcal{UDN}(Q)$, $\mathcal{X}^f$ is a factor of the system $\mathcal{X}^f_1 := (X\times[0,1),\mathscr{B},\lambda,T\rtimes M)$ via the factor map $\pi(w_1,w_2) = (w_1,sw_2)$.
    Since the entropy of $\lambda$ is at least as large as that of $\mu\times m$, we see that $\lambda = \mu\times m$, which yields the desired result.
\end{proof}
\section{Hot Spot Theorems and equivalent notions of normality}\label{HotSpotEquivalentSection}

One of the goals of this article is to prove an analogue of Theorem \ref{StrongestHotSpot} for deterministic dynamically generated basic sequences. In the proof of Theorem \ref{StrongestHotSpot} Bergelson and Vandehey made use of Hoeffding's inequality, but in our more general setting involving basic sequences, we require McDiarmid's inequality, which is a generalization of Hoeffding's inequality. We now record the special case McDiarmid's Inequality \cite{McDiardmidsInequality} that we will need later on.

\begin{lemma}
    Let $f:\{0,1\}^n\rightarrow\mathbb{R}$ be such that for any $x_1,\cdots,x_{i-1},x_i,x_i',x_{i+1},\cdots,x_n \in \{0,1\}^n$ we have

    \begin{equation}
        \left|f(x_1,\cdots,x_{i-1},x_i,x_{i+1},\cdots,x_n)-f\left(x_1,\cdots,x_{i-1},x_i',x_{i+1},\cdots,x_n\right)\right| \le \frac{1}{n}.
    \end{equation}
    If $X_1,\cdots,X_n$ are random variables on a probability space $(\Omega,\mathbb{P})$ taking values in $\{0,1\}$, then 

    \begin{equation}
        \mathbb{P}(|f(X_1,\cdots,X_n)-\mathbb{E}[f(X_1,\cdots,X_n)]| > \epsilon) \le 2\text{exp}\left(-2n\epsilon^2\right).
    \end{equation}
\end{lemma}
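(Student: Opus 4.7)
The plan is to follow the standard martingale-method proof of McDiarmid's inequality, specialized to the uniform bounded-differences constant $c_i = 1/n$. I note that the statement implicitly requires $X_1,\dots,X_n$ to be independent (otherwise McDiarmid's inequality fails), and I will assume this in what follows.

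First I would set up the Doob martingale associated with $f$. Let $\mathcal{F}_0 = \{\emptyset,\Omega\}$ and $\mathcal{F}_i = \sigma(X_1,\dots,X_i)$ for $1 \le i \le n$, and define
\begin{equation}
Y_i = \mathbb{E}[f(X_1,\dots,X_n)\mid\mathcal{F}_i], \qquad D_i = Y_i - Y_{i-1}.
\end{equation}
Then $(Y_i)_{i=0}^n$ is a martingale with $Y_0 = \mathbb{E}[f(X_1,\dots,X_n)]$, $Y_n = f(X_1,\dots,X_n)$, and $\mathbb{E}[D_i\mid\mathcal{F}_{i-1}] = 0$. The next step is to control the conditional range of $D_i$. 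Using the independence of the $X_j$'s, one can write
\begin{equation}
Y_i = g_i(X_1,\dots,X_i), \qquad g_i(x_1,\dots,x_i) = \mathbb{E}[f(x_1,\dots,x_i,X_{i+1},\dots,X_n)],
\end{equation}
and by the bounded-differences hypothesis $\lvert g_i(x_1,\dots,x_{i-1},0) - g_i(x_1,\dots,x_{i-1},1)\rvert \le \tfrac{1}{n}$. Consequently, conditional on $\mathcal{F}_{i-1}$, the random variable $D_i$ takes values in an interval of length at most $1/n$ and has mean zero.

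Next I would apply Hoeffding's lemma, which says that any mean-zero random variable $Z$ with values in an interval of length $L$ satisfies $\mathbb{E}[e^{sZ}] \le \exp(s^2 L^2/8)$. Applied conditionally, this gives
\begin{equation}
\mathbb{E}\!\left[e^{sD_i}\mid \mathcal{F}_{i-1}\right] \le \exp\!\left(\tfrac{s^2}{8n^2}\right).
\end{equation}
Iterating via the tower property, $\mathbb{E}[\exp(s(Y_n-Y_0))] \le \exp(s^2/(8n))$. The Chernoff bound then yields, for any $s > 0$,
\begin{equation}
\mathbb{P}(Y_n - Y_0 > \epsilon) \le e^{-s\epsilon}\exp\!\left(\tfrac{s^2}{8n}\right),
\end{equation}
and optimizing in $s$ (choosing $s = 4n\epsilon$) gives $\mathbb{P}(Y_n - Y_0 > \epsilon) \le \exp(-2n\epsilon^2)$. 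Applying the same argument to $-f$ controls the other tail, and a union bound produces the factor of $2$, giving the claimed inequality.

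The routine parts here are the Doob martingale setup and the Chernoff optimization; the only step requiring a bit of care is verifying that independence combined with the bounded-differences condition produces a conditional range of length at most $1/n$ for $D_i$ (and not $2/n$ from the naive pointwise bound $\lvert D_i\rvert \le 1/n$). This is what allows the constant in the exponent to be $2$ rather than $1/2$. Since the statement is essentially a textbook specialization of a well-known concentration inequality, I do not anticipate any substantive obstacles.
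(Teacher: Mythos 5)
Your proof is correct. Note, however, that the paper does not actually prove this lemma: it simply records it as a special case of McDiarmid's inequality with a citation, so there is no in-paper argument to compare against. Your write-up is the standard martingale (Azuma--Hoeffding) proof, and the two points that require care are both handled properly: first, you correctly observe that the statement as printed is missing the hypothesis that $X_1,\dots,X_n$ are independent --- without it the conclusion is false (take $X_1=\cdots=X_n$ Bernoulli and $f$ the average), and the paper's subsequent application does in fact only invoke the lemma for independent variables; second, you correctly identify that the conditional increment $D_i$ lies in an interval of length at most $1/n$ (not merely $|D_i|\le 1/n$), via the representation $Y_i=g_i(X_1,\dots,X_i)$ with $g_i$ defined by integrating out $X_{i+1},\dots,X_n$ using independence, which is exactly what produces the constant $2$ rather than $1/2$ in the exponent after Hoeffding's lemma and the Chernoff optimization $s=4n\epsilon$. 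No gaps.
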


We would once again like to thank Yuval Peres for providing us with the proof of Lemma \ref{LemmaUsingMcdiarmid}.

\begin{lemma}\label{LemmaUsingMcdiarmid}
    Fix $\epsilon > 0$ and $N \in \mathbb{N}$. Let $K = \lceil-\log_2(\epsilon)\rceil+2$ and let $(q_n)_{n = 1}^{KN} \in \mathbb{N}_{\ge 2}^{KN}$ be arbitrary. 
    For $1 \le n \le KN$ let $M_n = \prod_{m = 1}^nq_m$. For $\epsilon > 0$ and $(a,b) \subseteq [0,1)$, let

    \begin{equation*}
        E = E\left(\epsilon,a,b,(q_n)_{n = 1}^{KN}\right) = \left\{x \in [0,1)\ |\ \left|\frac{1}{KN}\sum_{n = 1}^{KN}\mathbbm{1}_{(a,b)}(M_nx)-(b-a)\right| > \epsilon\right\}.
    \end{equation*}
    There exists a set $E' \subseteq [0,1)$ for which $E \subseteq E'$, $E'$ is a union of intervals of length $M_{KN}^{-1}$, and

    \begin{equation*}
        m(E') \le 4K\text{exp}\left(-\frac{1}{2}N\epsilon^2\right).
    \end{equation*}
\end{lemma}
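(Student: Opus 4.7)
The plan is a sandwich-and-stripe argument. Write $x \in [0,1)$ in base $Q$ as $x = 0.x_1 x_2 \cdots_Q$; under Lebesgue measure the digits $x_j$ are independent with $x_j$ uniform on $\{0, \ldots, q_j - 1\}$, and $\{M_n x\} = \sum_{j > n} x_j/(q_{n+1}\cdots q_j)$. Truncate this tail after $K$ terms: set $\tilde M_n := \sum_{j=1}^K x_{n+j}/(q_{n+1}\cdots q_{n+j})$, so that $\{M_n x\} = \tilde M_n + \delta_n$ with $0 \le \delta_n < (q_{n+1}\cdots q_{n+K})^{-1} \le 2^{-K} \le \epsilon/4$, using $K \ge \log_2(1/\epsilon) + 2$. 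For every index $n$ with $n + K \le KN$, the window $[n+1, n+K]$ lies in $[1, KN]$, so $\tilde M_n$ is a function of $(x_1, \ldots, x_{KN})$ alone and in particular is constant on each grid interval $I_k := [k/M_{KN}, (k+1)/M_{KN})$. Define the Bernoullis $Z_n := \mathbbm{1}_{(a,\, b - 2^{-K})}(\tilde M_n)$ and $W_n := \mathbbm{1}_{(a - 2^{-K},\, b)}(\tilde M_n)$. The one-sided sandwich $Z_n \le Y_n \le W_n$, where $Y_n := \mathbbm{1}_{(a,b)}(\{M_n x\})$, follows from $0 \le \delta_n < 2^{-K}$. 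Since $\tilde M_n$ is uniformly distributed on a grid of $q_{n+1}\cdots q_{n+K} \ge 2^K$ equally-spaced points, counting these grid points in the relevant intervals gives $\mathbb{E}[Z_n] \ge (b-a) - \epsilon/2$ and $\mathbb{E}[W_n] \le (b-a) + \epsilon/2$.

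Partition $\{1, \ldots, KN - K\}$ into $K$ arithmetic-progression stripes $\mathcal{J}_r := \{n : n \equiv r \pmod{K}\}$, $r \in \{1, \ldots, K\}$, and let $N_r := |\mathcal{J}_r| \ge N - 1$. For distinct $n, n' \in \mathcal{J}_r$ the digit windows $[n+1, n+K]$ and $[n'+1, n'+K]$ are disjoint, so the families $\{Z_n\}_{n \in \mathcal{J}_r}$ and $\{W_n\}_{n \in \mathcal{J}_r}$ are each collections of mutually independent Bernoullis. Applying the preceding form of McDiarmid's inequality (which is Hoeffding for averages of independent Bernoullis, with bounded differences $1/N_r$) at deviation threshold $\epsilon/2$ yields
\[
\mathbb{P}\left(\left|\frac{1}{N_r}\sum_{n \in \mathcal{J}_r}(Z_n - \mathbb{E}[Z_n])\right| > \frac{\epsilon}{2}\right) \le 2\exp(-N\epsilon^2/2),
\]
and the analogous bound for the $W_n$'s. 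A union bound over the $2K$ stripe-level events produces a total failure probability of at most $4K\exp(-N\epsilon^2/2)$.

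On the complementary good event, combining the per-stripe sandwich $Z_n \le Y_n \le W_n$ for $n \le KN - K$ with the trivial bound $0 \le Y_n \le 1$ on the at most $K$ remaining edge indices yields $|\tfrac{1}{KN}\sum_{n=1}^{KN} Y_n - (b-a)| \le \epsilon$, so the good event lies in $E^c$. Because every $Z_n$ and $W_n$ appearing in the argument depends only on $(x_1, \ldots, x_{KN})$, the bad event is measurable with respect to the first $KN$ base-$Q$ digits of $x$ and is therefore automatically a union of the intervals $I_k$ of length $M_{KN}^{-1}$; this is the desired $E'$. I expect the main technical obstacle to lie in the bookkeeping at the edge indices $n > KN - K$: their $Y_n$'s contribute at most $K/(KN) = 1/N$ to the average and so the $1/N$ slack must be absorbed into the deviation threshold of the Hoeffding step by slightly tightening $\epsilon/2$. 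This is unproblematic in the meaningful regime $4K\exp(-N\epsilon^2/2) < 1$, where $N$ is forced to be large enough for the tightening to be cheap, and in the trivial regime one simply takes $E' = [0,1)$.
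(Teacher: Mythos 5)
Your proof is correct and follows essentially the same route as the paper's: reduce each indicator to a function of $K$ consecutive digits, split the indices into residue classes modulo $K$ so that each class consists of independent Bernoullis, apply McDiarmid/Hoeffding to each of the $2K$ families at threshold $\epsilon/2$, and take a union bound. The only (cosmetic) difference is that you truncate the orbit point $\left\{M_n x\right\}$ and shrink/enlarge the interval $(a,b)$ by $2^{-K}$, whereas the paper keeps $\left\{M_n x\right\}$ and instead sandwiches $(a,b)$ between unions $A_{n,k} \subseteq (a,b) \subseteq B_{n,k}$ of grid intervals; the edge-index bookkeeping you flag at the end is present (and likewise glossed over) in the paper's argument as well.
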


\begin{proof}
    For $n \in \mathbb{N}$ and $1 \le k \le K$, let $M_{n,k} = q_{Kn-K+k}q_{Kn-K+k+1}\cdots q_{Kn+k-1}$, let $P_{n,k}$ denote the partition of $[0,1)$ into consecutive half-open intervals of length $M_{n,k}^{-1} \le 2^{-K} \le \frac{\epsilon}{4}$, and let $A_{n,k},B_{n,k}$ be unions of elements of $P_{n,k}$ satisfying $A_{n,k} \subseteq F \subseteq B_{n,k}$ and $m(B_{n,k}\setminus A_{n,k}) < \frac{\epsilon}{2}$. 
    Consider the random variables $(X_{n,k})_{n = 1}^\infty$ and $(Y_{n,k})_{n = 1}^\infty$ given by $X_{n,k}(x) = \mathbbm{1}_{A_{n,k}}(M_{Kn-K+k}x)$ and $Y_{n,k}(x) = \mathbbm{1}_{B_{n,k}}(M_{Kn-K+k}x)$, so $X_{n,k}$ and $Y_{n,k}$ are each checking if the digit in position $Kn-K+k$ in the base $Q$ expansion of $x$ belongs to a given set. 
    We observe that for each $1 \le k \le K$, the sequences $(X_{n,k})_{n = 1}^N$ and $(Y_{n,k})_{n = 1}^N$ are independent sequences of random variables, and they each take values in $\{0,1\}$. Let $f_N(x_1,\cdots,x_N) = \frac{1}{N}\sum_{n = 1}^Nx_n$ and let
    
    \begin{alignat*}{2}
        E' = &\bigcup_{k = 1}^K\bigg(\left\{\left|f_N(X_{1,k},\cdots,X_{N,k})-\mathbb{E}[f(X_{1,k},\cdots,X_{N,k})]\right| > \frac{\epsilon}{2}\right\}\\
        &\ \ \qquad\cup\left\{\left|f_N(Y_{1,k},\cdots,Y_{N,k})-\mathbb{E}[f(Y_{1,k},\cdots,Y_{N,k})]\right| > \frac{\epsilon}{2}\right\}\bigg)
    \end{alignat*}
    We now use McDiarmid's inequality to see that for each $1 \le k \le K$ we have

    \begin{alignat*}{2}
        &m\left(\left|f_N(X_{1,k},\cdots,X_{N,k})-\mathbb{E}[f(X_{1,k},\cdots,X_{N,k})]\right| > \frac{\epsilon}{2}\right) < 2\text{exp}\left(-\frac{1}{2}N\epsilon^2\right),\\
        &m\left(\left|f_N(Y_{1,k},\cdots,Y_{N,k})-\mathbb{E}[f(Y_{1,k},\cdots,Y_{N,k})]\right| > \frac{\epsilon}{2}\right) < 2\text{exp}\left(-\frac{1}{2}N\epsilon^2\right),\\
        &f_N(X_{1,k}(x),\cdots,X_{N,k}(x)) \le \frac{1}{N}\sum_{n = 1}^N\mathbbm{1}_{(a,b)}(M_{Kn-K+k}x) \le f_N(Y_{1,k}(x),\cdots,Y_{N,k}(x)),\\
        &\mathbb{E}[f_N(Y_{1,k},\cdots,Y_{N,k})]-\mathbb{E}[f_N(X_{1,k},\cdots,X_{N,k})] < \frac{\epsilon}{2}\text{, and}\\
        &\mathbb{E}[f(X_{1,k},\cdots,X_{N,k})] < b-a < \mathbb{E}[f(Y_{1,k},\cdots,Y_{N,k})]\text{, hence}\\
        &m\left(\left\{y \in [0,1)\ |\ \left|\frac{1}{N}\sum_{n = 1}^N\mathbbm{1}_{(a,b)}(M_{Kn-K+k}y)-(b-a)\right| > \epsilon\right\}\right)\\
        \le& m\bigg(\bigg\{y \in [0,1)\ |\ \text{max}\bigg(\bigg|\frac{1}{N}\sum_{n = 1}^NX_{Kn-K+k}(y)-\mathbb{E}[f(Y_{1,k},\cdots,Y_{N,k})]\bigg|,\\
        &\textcolor{white}{m\bigg(\bigg\{y \in [0,1)\ |\ \text{max}\bigg(}\bigg|\frac{1}{N}\sum_{n = 1}^NY_{Kn-K+k}(y)-\mathbb{E}[f(X_{1,k},\cdots,X_{N,k})]\bigg|\bigg) > \epsilon\bigg\}\bigg)\\
        \le &m\bigg(\left\{\left|f_N(X_{1,k},\cdots,X_{N,k})-\mathbb{E}[f(X_{1,k},\cdots,X_{N,k})]\right| > \frac{\epsilon}{2}\right\}\\
        &\textcolor{white}{m\bigg(}\cup\left\{\left|f_N(Y_{1,k},\cdots,Y_{N,k})-\mathbb{E}[f(Y_{1,k},\cdots,Y_{N,k})]\right| > \frac{\epsilon}{2}\right\}\bigg) \le 4\text{exp}\left(-\frac{1}{2}N\epsilon^2\right)\text{, and}\\
        &m\left(\left\{y \in [0,1)\ |\ \left|\frac{1}{KN}\sum_{n = 1}^{KN}\mathbbm{1}_{(a,b)}(M_ny)-(b-a)\right| > \epsilon\right\}\right) \le m(E') < 4K\text{exp}\left(-\frac{1}{2}N\epsilon^2\right).
    \end{alignat*}
    Lastly, we observe that $f_N(X_{1,k}(x),\cdots,X_{N,k}(x))$ and $f_N(Y_{1,k}(x),\cdots,Y_{N,k}(x))$ are completely determined by the first $KN$ digits of the base $Q$ expansion of $x$, which is why $E'$ is a union of intervals of length $M_{KN}^{-1}$. 
\end{proof}

For the proofs of our next two results, we use big O notation as follows.
We write $O(x)$ to denote a quantity $y$ satisfying $y \le Cx$ for some absolute constant $C$.
Usually we will write an equation such as $a+b = c+O(x)$, which is another way of saying $|a+b-c| < Cx$.

\begin{theorem}\label{DN(Q)ImpliesUDN(Q)}
    If $Q = (q_n)_{n = 1}^\infty$ is a deterministic dynamically generated basic sequence, then $\mathcal{DN}(Q) = \mathcal{UDN}(Q)$.
\end{theorem}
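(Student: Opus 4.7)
The plan is to prove both inclusions of $\mathcal{DN}(Q) = \mathcal{UDN}(Q)$. The inclusion $\mathcal{UDN}(Q) \subseteq \mathcal{DN}(Q)$ is immediate from Theorem~\ref{SkewProductCharacterizationsOfNormalities}(iii) by restricting test functions to depend only on the $[0,1)$ coordinate. For the reverse inclusion, I would fix $z \in \mathcal{DN}(Q)$ and let $\lambda$ be any weak-$*$ accumulation point of $\{\frac{1}{N}\sum_{n=1}^N \delta_{(T\rtimes M)^n(x_0,z)}\}_{N\in\mathbb{N}}$. Such a $\lambda$ is $(T \rtimes M)$-invariant; its projection to $X$ is $\mu$ (by genericity of $x_0$ in $\mathcal{X}$) and its projection to $[0,1)$ is $m$ (by the hypothesis $z \in \mathcal{DN}(Q)$). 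By Theorem~\ref{SkewProductCharacterizationsOfNormalities}(ii), showing that every such $\lambda$ equals $\mu \times m$ is equivalent to $z \in \mathcal{UDN}(Q)$.

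Using Lemma~\ref{WhenIsASequenceDynamicallyGenerated} together with Stone--Weierstrass, it suffices to check that $\lambda$ agrees with $\mu \times m$ on all products $\mathbbm{1}_{E_w}(x)\,e^{2\pi i k y}$ with $w$ a word of bases and $k \in \mathbb{Z}$. The case $k = 0$ holds because $x_0$ is generic; writing $v_n := q_n q_{n-1} \cdots q_1 z \bmod 1$, the case $k \neq 0$ reduces to
\begin{equation*}
\lim_{N \to \infty} \frac{1}{N}\sum_{n=1}^N \mathbbm{1}_{E_w}(T^n x_0)\, e^{2\pi i k v_n} = 0.
\end{equation*}
Here determinism of $Q$, equivalent to zero entropy of $\mathcal{X}$ by Lemma~\ref{DeterministicSystemsProduceDeterministicSequences}, provides the key combinatorial input. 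Given $\epsilon > 0$, one finds $k_0 \ge |w|$, a set $\mathcal{N}_\epsilon \subseteq \mathbb{N}$ with $\overline{d}(\mathcal{N}_\epsilon) < \epsilon$, and at most $e^{k_0\epsilon}$ length-$k_0$ blocks $W_1,\ldots,W_K$ exhausting the length-$k_0$ windows of $Q$ at positions $n \notin \mathcal{N}_\epsilon$. Partitioning indices into classes $S_{W_j}(N) = \{n \in [1,N]\setminus\mathcal{N}_\epsilon : (q_{n+1},\ldots,q_{n+k_0}) = W_j\}$, the indicator $\mathbbm{1}_{E_w}(T^n x_0)$ is a constant $c_j \in \{0,1\}$ on each class, splitting the Fourier average into $\le e^{k_0\epsilon}$ per-class exponential sums, weighted by $|S_{W_j}(N)|/N \to \mu(E_{W_j})$.

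The per-class sums $\sum_{n \in S_{W_j}(N)} e^{2\pi i k v_n}$ are controlled by applying Lemma~\ref{LemmaUsingMcdiarmid} at a scale $KN'$ compatible with $k_0$ and $\epsilon$: for each class and each real/imaginary test interval $(a,b)$, the lemma produces an exceptional Lebesgue set of measure $O(K\exp(-N'\epsilon^2/2))$, consisting of intervals of length $(q_1\cdots q_{KN'})^{-1}$, outside of which the relevant averages deviate from $b-a$ by at most $\epsilon$. A union bound over the $K \le e^{k_0\epsilon}$ classes and the finite collection of test intervals keeps the total exceptional set Lebesgue-small. The main obstacle is transferring this Lebesgue-generic concentration to the specific $z \in \mathcal{DN}(Q)$. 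This passage hinges on the fact that the exceptional set is a union of base-$Q$ cylinders of depth $KN'$; the distribution normality $z \in \mathcal{DN}(Q)$ forces the orbit $(v_n)$ to visit any family of intervals with its Lebesgue frequency, so the exponentially small Lebesgue mass of the exceptional cylinders translates into an exponentially small contribution to the Cesaro averages along the orbit of $z$. A diagonal limit over $\epsilon \to 0$ and the finite collection of test characters then yields the vanishing of the Fourier average and completes the proof.
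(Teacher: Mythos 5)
Your proposal follows essentially the same route as the paper's proof: determinism (equivalently, zero entropy via Lemma~\ref{DeterministicSystemsProduceDeterministicSequences}) reduces the analysis to exponentially few base-window types, Lemma~\ref{LemmaUsingMcdiarmid} produces for each type an exceptional set of exponentially small Lebesgue measure that is a finite union of intervals at a \emph{fixed} depth, and $z \in \mathcal{DN}(Q)$ converts that Lebesgue bound into a density bound along the orbit --- the paper merely packages the conclusion as equidistribution on the sets $E_{D,B}$ (i.e.\ membership in $\mathcal{UN}(Q)$, then Theorem~\ref{UN=UDN}) instead of via Fourier characters and weak-$*$ limit points. One bookkeeping point to fix in a full write-up: the union bound over the $\le e^{k_0\epsilon}$ classes does not close if the determinism exponent and the McDiarmid deviation are both taken to be $\epsilon$ (since $e^{k_0\epsilon}\cdot e^{-cN'\epsilon^2}$ need not be small); the paper takes the determinism/entropy parameter to be $\delta = \epsilon^3/(32K)$ precisely so that the count of window types is beaten by the concentration bound.
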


\begin{proof}
    It is clear that $\mathcal{UDN}(Q) \subseteq \mathcal{DN}(Q)$, so we need to show that $\mathcal{DN}(Q) \subseteq \mathcal{UDN}(Q)$.
    Let $Q$ be generated by $(X,\mathscr{B},\mu,T,f,x)$ and let us fix some $y \in \mathcal{DN}(Q)$.
    Let us fix a block of bases $B = (b_1,\cdots,b_\ell) = (q_n,\cdots,q_{n+\ell-1})$ for some $n \in \mathbb{N}$, and let us also fix a block of digits $D = (d_1,\cdots,d_\ell) \in \mathbb{N}_0^\ell$ with $D < B$.
    Using Theorem \ref{SkewProductCharacterizationsOfNormalities}(ii), it suffices to show that

    \begin{equation}
        \lim_{N\rightarrow\infty}\frac{1}{N}\sum_{n = 1}^N\mathbbm{1}_{E_{D,B}}((T\rtimes M)^n(x,y)) = \mu\times m(E_{D,B}) = \frac{\mu(E_B)}{b_1\cdots b_\ell}.
    \end{equation}
    Let $\epsilon = \epsilon(\ell) > 0$ be sufficiently small, let $K = \lceil-\log_2(\epsilon)\rceil+2$ and let $\delta = \frac{1}{32K}\epsilon^3$. 
    As in the proof of Theorem \ref{DeterministicSystemsProduceDeterministicSequences}, let $L_0 \in \mathbb{N}$ be so large that  for $L_1 := \ell+\left\lceil\frac{1}{\epsilon}KL_0\right\rceil$ the partition $\xi^{L_1}$ has a collection of cells $\left\{\xi_w^{L_1}\right\}_{w \in W_{L_1}}$\footnote{We remark that $E_B = \xi_B^\ell$, but we will continue to use both notations for the sake of presentation.} such that $\mu\left(\xi_w^{L_1}\right) > e^{-L_1\delta}$ for all $w \in W_{L_1}$, $|W_{L_1}| < e^{L_1\delta}$, $\mu\left(\bigcup_{w \in W_{L_1}}\xi_w^{L_1}\right) > 1-\delta$, and

    \begin{equation}
        2K\text{exp}\left(L_1\delta-\frac{1}{16}L_0\epsilon^2\right) < \frac{1}{2}\epsilon.
    \end{equation}
    Furthermore, we may assume without loss of generality that $L_0 > \frac{4}{\epsilon}$.
    Let $L_2 = L_1-\ell = \left\lceil\frac{1}{\epsilon}KL_0\right\rceil$.
    We see that 

    \begin{alignat*}{2}
        & \frac{1}{N}\sum_{n = 1}^N\mathbbm{1}_{E_{D,B}}\left((T\rtimes M)^n(x,y)\right) = \frac{1}{N}\sum_{n = 1}^N\frac{1}{L_2}\sum_{l = 1}^{L_2}\mathbbm{1}_{E_{D,B}}\left((T\rtimes M)^{n+l}(x,y)\right)+O\left(\frac{L_2}{N}\right).
    \end{alignat*}
    We now let $[1,N] = A_N\cup B_N\cup C_N$ as follows. 
    Let $A_N$ be the set of $n \in [1,N]$ for which

    \begin{equation}
        \frac{1}{L_2}\sum_{l = 1}^{L_2}\mathbbm{1}_{E_B}\left(T^{n+l}x\right) < \frac{KL_0}{L_2} < \frac{\epsilon}{2},
    \end{equation}
    let $B_N$ be the set of $n \in [1,N]\setminus A_N$ for which
    
    \begin{equation}
        \left|\frac{1}{L_2}\sum_{l = 1}^{L_2}\mathbbm{1}_{E_{D,B}}\left((T\rtimes M)^{n+l}(x,y)\right)-\frac{1}{L_2b_1\cdots b_\ell}\sum_{l = 1}^{L_2}\mathbbm{1}_{E_B}\left(T^{n+l}x\right),\right| \le \epsilon,
    \end{equation}
    and let $C_N = [1,N]\setminus(A_N\cup B_N)$. 
    Writing $a\overset{\epsilon}{=}b$ to denote $|a-b| \le \epsilon$, we see that

    \begin{alignat*}{2}
        S(A,N) :=&\frac{1}{N}\sum_{n \in A_N}\frac{1}{L_2}\sum_{l = 1}^{L_2}\mathbbm{1}_{E_{D,B}}\left((T\rtimes M)^{n+l}(x,y)\right) \overset{\epsilon}{=} \frac{1}{N}\sum_{n \in A_N}\frac{1}{L_2b_1\cdots b_\ell}\sum_{l = 1}^{L_2}\mathbbm{1}_{E_B}\left(T^{n+l}x\right),\\
        S(B,N) :=&\frac{1}{N}\sum_{n \in B_N}\frac{1}{L_2}\sum_{l = 1}^{L_2}\mathbbm{1}_{E_{D,B}}\left((T\rtimes M)^{n+l}(x,y)\right) \overset{\epsilon}{=} \frac{1}{N}\sum_{n \in B_N}\frac{1}{L_2b_1\cdots b_\ell}\sum_{l = 1}^{L_2}\mathbbm{1}_{E_B}\left(T^{n+l}x\right)\text{, and}\\
        S(C,N) :=&\frac{1}{N}\sum_{n \in C_N}\frac{1}{L_2}\sum_{l = 1}^{L_2}\mathbbm{1}_{E_{D,B}}\left((T\rtimes M)^{n+l}(x,y)\right) \le \frac{|C_N|}{N}.
    \end{alignat*}
    
    Now we will show that $\limsup_{N\rightarrow\infty}\frac{|C_N|}{N} < \epsilon$.
    Let $\mathcal{B}_{L_1} = \bigcup_{w \notin W_{L_1}}\xi_w^{L_1}$, and observe that $\mu\left(\mathcal{B}_{L_1}\right) < \delta$. 
    We see that for any $m \in \mathbb{N}$, the set $\mathcal{L}_m := \left\{1 \le l \le L_2\ |\ T^{l+m}x \in E_B\right\}$ is determined by the $w = w(m) \in \mathbb{N}_{\ge 2}^{L_1}$ for which $T^mx \in \xi_w^{L_1}$.
    It follows that whether or not a given $m \in [1,N]$ satisfies $m \in A_N$ is also determined by $w(m)$, so let $\mathcal{A}_{L_1} \in \mathscr{B}$ be such that $m \in A_N$ if and only if $\xi_{w(m)}^{L_1} \subseteq \mathcal{A}_{L_1}$.
    We see that for $D_N := \left\{1 \le n \le N\ |\ \xi_{w(n)}^{L_1} \subseteq \mathcal{B}_{L_1}\right\} = \left\{1 \le n \le N\ |\ T^nx \in \mathcal{B}_{L_1}\right\}$, we have $\lim_{N\rightarrow\infty}\frac{|D_N|}{N} = \mu\left(\mathcal{B}_{L_1}\right) < \delta$, so it suffices to show that $\limsup_{N\rightarrow\infty}\frac{|C_N\setminus D_N|}{N} < \frac{1}{2}\epsilon < \epsilon-\delta$. 
    Now let us suppose that $m \notin A_N$, so $|\mathcal{L}_m| \ge KL_0$. 
    Let $\mathcal{L}_m' \subseteq \mathcal{L}_m$ be such that $K$ divides $|\mathcal{L}_m'|$ and $|\mathcal{L}_m\setminus \mathcal{L}_m'| < K$.
    Using Lemma \ref{LemmaUsingMcdiarmid}, we see if $T^mx \in \xi_{w(m)}^{L_1}$, then

    \begin{alignat*}{2}
        &m\left(\underbrace{\left\{z\ |\ \left|\frac{1}{L_2}\sum_{l = 1}^{L_2}\mathbbm{1}_{E_{D,B}}\left((T\rtimes M)^l(T^mx,z)\right)-\frac{1}{L_2b_1\cdots b_\ell}\sum_{l = 1}^{L_2}\mathbbm{1}_{E_B}\left(T^{l+m}x\right)\right| > \epsilon\right\}}_{:= H_{w(m)}}\right)\\
        =&m\left(z\ |\ \left|\frac{1}{|\mathcal{L}_m|}\sum_{l \in \mathcal{L}_m}\mathbbm{1}_{E_{D,B}}\left((T\rtimes M)^l(T^mx,z)\right)-\frac{1}{|\mathcal{L}_m|b_1\cdots b_\ell}\sum_{l \in \mathcal{L}_m}\mathbbm{1}_{E_B}\left(T^{l+m}x\right)\right| > \frac{L_2}{|\mathcal{L}_m|}\epsilon\right)\\
        =&m\left(z\ |\ \left|\frac{1}{|\mathcal{L}_m|}\sum_{l \in L_m}\mathbbm{1}_{X\times I_{D,B}}\left((T\rtimes M)^l(T^mx,z)\right)-\frac{1}{b_1\cdots b_\ell}\right| > \frac{L_2}{|\mathcal{L}_m|}\epsilon\right)\\
        \le&m\left(z\ |\ \left|\frac{1}{|\mathcal{L}_m'|}\sum_{l \in \mathcal{L}_m'}\mathbbm{1}_{X\times I_{D,B}}\left((T\rtimes M)^l(T^mx,z)\right)-\frac{1}{b_1\cdots b_\ell}\right| > \frac{L_2}{|\mathcal{L}_m|}\epsilon-2\frac{K-1}{|\mathcal{L}_m|}\right)\\
        \le&m\left(z\ |\ \left|\frac{1}{|\mathcal{L}_m'|}\sum_{l \in \mathcal{L}_m'}\mathbbm{1}_{X\times I_{D,B}}\left((T\rtimes M)^l(T^mx,z)\right)-\frac{1}{b_1\cdots b_\ell}\right| > \frac{1}{2}\epsilon\right)\\
        < &2K\text{exp}\left(-\frac{1}{2}\frac{|\mathcal{L}_m'|}{K}\left(\frac{\epsilon}{2}\right)^2\right) \le 2K\text{exp}\left(-\frac{1}{8}L_0\epsilon^2\right).
    \end{alignat*}
    If $m \in C_N\setminus D_N$, then $w(m) \in W_{L_1}$, $m \notin A_N$, and $(T\rtimes M)^m(x,y) \in \xi_{w(m)}^{L_1}\times H_{w(m)}$. Since $y \in \mathcal{DN}(Q)$ and each $H_{w(m)}$ is a finite union of intervals, we see that

    \begin{alignat}{2}
        &\limsup_{N\rightarrow\infty}\frac{|C_N\setminus D_N|}{N} \le \limsup_{N\rightarrow\infty}\left|\bigcup_{w \in W_{L_1}\setminus\mathcal{A}_{L_1}}\left\{1 \le m \le N\ |\ (T\rtimes M)^m(x,y) \in \xi_w^{L_1}\times H_{w(m)}\right\}\right|\\
        \le&\limsup_{N\rightarrow\infty}\left|\bigcup_{w \in W_{L_1}\setminus\mathcal{A}_{L_1}}\left\{1 \le m \le N\ |\ (T\rtimes M)^n(x,y) \in X\times H_{w(m)}\right\}\right|\\
        \le& \sum_{w \in W_{L_1}\setminus\mathcal{A}_{L_1}}m(H_{w(m)}) \le |W_{L_1}\setminus\mathcal{A}_{L_1}|\cdot2K\text{exp}(-\frac{1}{8}L_0\epsilon^2) \le 2K\text{exp}\left(L_1\delta-\frac{1}{8}L_0\epsilon^2\right) < \frac{1}{2}\epsilon.
    \end{alignat}
    Since we have shown that $\limsup_{N\rightarrow\infty}\frac{|C_N|}{N} < \epsilon$, we see that
    
    \begin{equation}
        \frac{1}{N}\sum_{n \in C_N}\frac{1}{L_2}\sum_{l = 1}^{L_2}\mathbbm{1}_{E_{D,B}}\left((T\rtimes M)^{n+l}(x,y)\right) \overset{2\epsilon}{=} \frac{1}{N}\sum_{n \in C_N}\frac{1}{L_2b_1\cdots b_\ell}\sum_{l = 1}^{L_2}\mathbbm{1}_{E_B}\left(T^{n+l}x\right).
    \end{equation}
    Putting everything together, we see that

    \begin{alignat*}{2}
        &\frac{1}{N}\sum_{n = 1}^N\mathbbm{1}_{E_{D,B}}((T\rtimes M)^n(x,y)) = S(A,N)+S(B,N)+S(C,N)+O\left(\frac{L_2}{N}\right)\\
        \overset{4\epsilon}{=}&\frac{1}{NL_2b_1\cdots b_\ell}\sum_{n = 1}^N\sum_{l = 1}^{L_2}\mathbbm{1}_{E_B}\left(T^{n+l}x\right)+O\left(\frac{L_2}{N}\right)\\
        =& \frac{1}{Nb_1\cdots b_\ell}\sum_{n = 1}^N\mathbbm{1}_{E_B}(T^nx)+O\left(\frac{L_2}{N}\right)\overset{N\rightarrow\infty}{\longrightarrow}\frac{\mu(E_B)}{b_1\cdots b_\ell}.
    \end{alignat*}
\end{proof}

Theorem~\ref{StrongestHotSpotForDDGBasicSequences} can be viewed as a generalization of the Bergelson-Vandehey Hot Spot Theorem to a large class of deterministic dynamically generated basic sequences.
We recall that for any dynamically generated $Q$, we have $\mathcal{UN}(Q) = \mathcal{UDN}(Q)$, and that $P_D = \displaystyle\lim_{n\rightarrow\infty}\frac{Q_n(D)}{n}$.

\begin{proof}[Proof of Theorem~\ref{StrongestHotSpotForDDGBasicSequences}]
    We begin by proving (i). Using Theorems \ref{DN(Q)ImpliesUDN(Q)}, it suffices to show that $y \in \mathcal{DN}(Q)$.
    Let $\nu_{(a,b)}(y,N,w) := \nu_{(a,b)}(y,N,w;\emptyset)$, and let us fix some $0 \le c < d \le 1$ with the intent of showing that $\lim_{N\rightarrow\infty}\nu_{(c,d)}(y,N,Q) = d-c$. 
    Let $\epsilon > 0$ be arbitrary.
    Let $W > 0$ be such that $W^2\epsilon^2 > 8\left\lceil-\log_2(W\epsilon)\right\rceil+16$ and let $K := \left\lceil-\log_2(W\epsilon)\right\rceil+2$. 
    Since $Q$ is deterministic, let $L_0 \in \mathbb{N}$ be such that $p_{\epsilon}(KL,Q) < \text{exp}\left(L\epsilon^2\right)$ for all $L \ge L_0$, let $A_L$ have upper density at most $\frac{1}{3}\epsilon$ and be such that $|B_{KL}(Q;A_L)| < \text{exp}\left(L\epsilon^2\right)$.
    We observe that 
    
    \begin{alignat*}{2}
        &\mu\left(\bigcup_{B \in B_{KL}(Q;A_L)}E_B\right) = d(\{n \in \mathbb{N}\ |\ T^nx \in E_B\ \&\ B \in B_{KL}(Q;A_L)\})\\
        \ge &d(\{n \in \mathbb{N}\setminus A_L\ |\ (q_n,\cdots,q_{n+KL-1}) \in B_{KL}\left(Q;A_L\right)\}) = d\left(A_L^c\right) \ge 1-\frac{1}{3}\epsilon.
    \end{alignat*}
    We recall from Lemma \ref{LemmaUsingMcdiarmid} that for $w \in \mathbb{N}_{\ge 2}^{KL}$ and $E'(w) = E'(W\epsilon,c,d,w)$ we have $m\left(E'(w)\right) \le 4K\text{exp}\left(-\frac{1}{2}LW^2\epsilon^2\right)$.
    Since $\int_X\log(f)d\mu < \infty$, let $M > 0$ be such that $\mu\left(\left\{\tilde{f} \le M\right\}\right) > 1-\frac{1}{3}\epsilon$.
    Pick $\sigma \in (0,1)$ such that $(1-\sigma)(M+1) < 2-\sigma$.
    We can assume without loss of generality that $L$ is so large that $4K\text{exp}\big(-KL\epsilon^2\big) < \epsilon$ and that $2^{-L} < \delta$. 
    Let $\tilde{f} := \mathbb{E}[\log(f)|\mathcal{I}_T]$, i.e., $\tilde{f}$ is the conditional expectation of $f$ with respect to the $\sigma$-algebra $\mathcal{I}_T \subseteq \mathscr{B}$ of $T$-invariant sets. 
    Using The Mean Ergodic Theorem, we see that
    
    \begin{equation}
        \lim_{N\rightarrow\infty}\frac{1}{N}\sum_{n = 1}^NT^n\log(f) = \tilde{f},
    \end{equation}
    with convergence taking place in $L^1(X,\mu)$, hence convergence also takes place in measure, so we may also assume without loss of generality that $L$ is so large that

    \begin{equation}\label{EquationUsingErgodicityAndFiniteLnIntegral}
        \left|\frac{1}{KL}\sum_{n = 1}^{KL}\log(f(T^nx))-\tilde{f}(x)\right| < 1,
    \end{equation}
    on a set of measure at least $1-\frac{1}{3}\epsilon$.
    Furthermore, since the value of $f_L(x) := \frac{1}{KL}\sum_{n = 1}^{KL}\log(f(T^nx))$ depends only on the $B \in \mathbb{N}_{\ge 2}^{KL}$ for which $x \in E_B$, there is a collection $\mathcal{B} \subseteq \mathbb{N}_{\ge 2}^{KL}$ for which $X_\mathcal{B} := \left\{\left|f_L-\tilde{f}\right| < 1\right\} = \bigcup_{B \in \mathcal{B}}E_B$.
    It follows that for $\mathcal{B}' = B_{KL}(Q;A_L)\cap\mathcal{B}$ and $X_{\mathcal{B}'} := \bigcup_{B \in \mathcal{B}'}E_B$ we have $\mu(X_{\mathcal{B}'}) > 1-\epsilon$.
    We also recall from Lemma \ref{LemmaUsingMcdiarmid} that for each $w \in \mathbb{N}^{KL}$ the set $E'(w) = E'(\eta,c,d,w)$ is a union of intervals $\left\{I_{D_j,w}\right\}_{j = 1}^{J_w}$, and we see that if $w \in \mathcal{B}'$ and $x_w \in E_w$, then
    
    \begin{equation*}
        m\left(I_{D_j,w}\right) = \text{exp}\left(-KLf_L(x_w)\right) \in \left(\text{exp}\left(-KL\left(\tilde{f}(x_w)+1\right)\right),\text{exp}\left(-KL\left(\tilde{f}(x_w)-1\right)\right)\right).
    \end{equation*}
    We see that for $w \in \mathcal{B}'$ and $x_w \in E_w$ we have

    \begin{alignat*}{2}
        &J_w\text{exp}\left(-KL\left(\tilde{f}(x_w)+1\right)\right) \le m\left(E'(w)\right) \text{, hence }\\
        &J_w \le 4K\text{exp}\left(KL\left(\tilde{f}(x_w)+1\right)-\frac{1}{2}LW^2\epsilon^2\right).
    \end{alignat*}
    
    We now use \eqref{GeneralizedDDGHotspotAssumption} to pick a $\mathcal{N}_\sigma \subseteq \mathbb{N}$ with upper density at most $1-\sigma$ and pick $N$ so large that for all $w \in \mathcal{B}'$ and all $1 \le j \le J_w$ we have $\nu_{I_{D_j,w}}(y,N,Q;\mathcal{N}_\sigma) < CNm\left(I_{D_j,w}\right)^\sigma$. 
    Let $\mathcal{A} = \{n \in \mathbb{N}\ |\ T^nx \not\in X_{\mathcal{B}'}\}\cup\mathcal{N}_\sigma$ and observe that $d(\mathcal{A}) \le \epsilon+1-\sigma$. 
    Let
    
    \begin{equation*}
        B = \left\{n \in \mathbb{N}\setminus\mathcal{N}_\sigma\ |\ (T\rtimes M)^n(x,y) \in \bigcup_{w \in \mathcal{B}'}\bigcup_{j = 1}^{J_w}T(E_w)\times I_{D_j,w}\right\} \text{ and }G = \mathbb{N}\setminus(\mathcal{A}\cup B).
    \end{equation*}
   Intuitively, $\mathcal{A}\cup B$ is the set of bad $n$ and $G$ is the set of good $n$. 
   We see that

    \begin{alignat}{2}
        &\frac{|B\cap[1,N]|}{N} \le \sum_{w \in \mathcal{B}'}\sum_{j = 1}^{J_w}\frac{\nu_{I_{D_j,w}}\left(y,N,Q;\mathcal{N}_\sigma\right)}{N} < \sum_{w \in \mathcal{B}'}\sum_{j = 1}^{J_w}Cm\left(I_{D_j,w}\right)^\sigma\label{EquationThatCouldBeSimplified}\\
        \le &C\sum_{w \in \mathcal{B}'}J_w\text{exp}\left(-KL\left(\tilde{f}(x_w)-1\right)\right)^\sigma\notag\\
        \le &4CK\sum_{w \in \mathcal{B}'}\text{exp}\left(KL(1-\sigma)\tilde{f}(x_w)+KL(1+\sigma)-\frac{1}{2}LW^2\epsilon^2\right)\label{EquationNeedingFiniteIntegral}\\
        \le&4CK|\mathcal{B}'|\text{exp}\left(-KL\epsilon^2\right) < 4CK\epsilon,\text{ hence}\notag\\
        &\frac{\nu_{(c,d)}(y,N,Q)}{N} = \sum_{n \in [1,N]}\frac{\nu_{(c,d)}(M_ny,KL,w_n)}{NKL}+O\left(\frac{KL}{N}\right)\notag\\
        =& \sum_{n \in [1,N]\setminus\mathcal{A}}\frac{\nu_{(c,d)}(M_ny,KL,w_n)}{NKL}+O(1-\sigma+\epsilon)+O\left(\frac{KL}{N}\right)\notag\\
        =& \sum_{n \in B\cap[1,N]}\frac{\nu_{(c,d)}(M_ny,KL,w_n)}{NKL}+\sum_{n \in G\cap[1,N]}\frac{\nu_{(c,d)}(M_ny,KL,w_n)}{NKL}+O\left(1-\sigma+2\epsilon\right)+O\left(\frac{KL}{N}\right)\notag\\
        =&O\left(\frac{|B\cap[1,N]|}{N}\right)+\frac{|G\cap[1,N]|}{N}(d-c)+O(W\epsilon)+O(1-\delta+\epsilon)+O\left(\frac{KL}{N}\right)\notag\\
        =&O(C\epsilon)+(1-O(C\epsilon)-O(1-\sigma+\epsilon))(d-c)+O(W\epsilon)+O(1-\sigma+\epsilon)+O\left(\frac{KL}{N}\right).\notag
    \end{alignat}
    Since $\epsilon$ can be taken to be arbitrarily small, $\sigma$ can be taken to be arbitrarily close to $1$, and $N$ can be taken to be arbitrarily large (after $K$ and $L$ have been determined), the desired result follows.

    We now proceed to prove $(ii)$.
    We observe that in the proof of $(i)$, the assumption in Equation \eqref{GeneralizedDDGHotspotAssumption} was only used to show that $\frac{|B|}{N} < C\epsilon$. 
    Consequently, it suffices to once again show that $\frac{|B|}{N} < 2C\epsilon$ under the new assumption in Equation \eqref{GeneralizedDDGHotspotAssumption2} and the ergodicity of $(X,\mathscr{B},\mu,T)$.
    Our method of doing this will be to show that if we enlarge the sets $T(E_w)\times I_{D_j,w}$ to the sets $S_{D_j}$ (instead of the horizontal strips $X\times I_{D_j,w}$ that were used in part (i)), then the measure of the cover is still small. 
    To this end, we see that

    \begin{alignat*}{2}
        &\frac{|B|}{N} = \sum_{w \in \mathcal{B}'}\sum_{j = 1}^{J_w}\frac{N^Q_N(D_j,w,y;\mathcal{A})}{N} \le \sum_{w \in \mathcal{B}'}\sum_{j = 1}^{J_w}\frac{N^Q_N(D_j,y;\mathcal{A})}{N} \le \sum_{w \in \mathcal{B}'}\sum_{j = 1}^{J_w}CP_{D_j}.
    \end{alignat*}
    Since $(X,\mathscr{B},\mu,T)$ is ergodic, we may take $M = \int_X\log(f)d\mu$, and we see that for $w \in \mathcal{B}'$ we have $m\left(I_{D_j,w}\right) \in \left(\text{exp}(-KL(M+1)),\text{exp}(-KL(M-1))\right)$ and $J_w \le 4K\text{exp}\left(KL(M+1)-\frac{1}{2}LW^2\epsilon^2\right)$.
    For each block of digits $D' \in \mathbb{N}_0^{KL}$, let $P_{D'}(0) = \mu\times m(S_{D'}\cap X_{\mathcal{B}'}\times[0,1))$ and let $P_{D'}(1) = P_{D'}-P_{D'}(0)$.
    We observe that 

    \begin{alignat*}{2}
        &\sum_{D' \in \mathbb{N}_0^{KL}}P_{D'}(1) = \mu(X_{\mathcal{B}'}^c) < \epsilon\text{ and}\\
        &\sum_{w \in \mathcal{B}'}\sum_{j = 1}^{J_w}P_{D_j}(0) = \sum_{w \in \mathcal{B}'}\sum_{j = 1}^{J_w}\sum_{B \in \mathcal{B}'}m(I_{D_j,B})\mu(E_B)\\
        \le&\sum_{w \in \mathcal{B}'}\sum_{j = 1}^{J_w}\sum_{B \in \mathcal{B}'}\text{exp}(-KL(M-1))\mu(E_B) \le \sum_{w \in \mathcal{B}'}J_w\text{exp}(-KL(M-1))\\
        \le & 4K|\mathcal{B}'|\text{exp}(2KL-\frac{1}{2}LW^2\epsilon^2) \le 4K\text{exp}\left(L\left(\epsilon^2+2K-\frac{1}{2}W^2\epsilon^2\right)\right) \le \epsilon\text{, hence}\\
        &\sum_{w \in \mathcal{B}'}\sum_{j = 1}^{J_w}CP_{D_j} = \sum_{w \in \mathcal{B}'}\sum_{j = 1}^{J_w}CP_{D_j}(0)+\sum_{w \in \mathcal{B}'}\sum_{j = 1}^{J_w}CP_{D_j}(1) < 2C\epsilon.
    \end{alignat*}
\end{proof}

\begin{remark}\label{RemarkAboutHotSpot}
    A few remarks are in order regarding Theorem \ref{StrongestHotSpotForDDGBasicSequences}. 
    Firstly, it is natural to ask if the assumption that $\int_X\log(f)d\mu < \infty$ as well as the assumption that $(X,\mathscr{B},\mu,T)$ is ergodic in part (ii) are necessary assumptions, as we did not need them to prove Theorem \ref{DN(Q)ImpliesUDN(Q)}. 
    We do not currently have any examples demonstrating the necessity of these assumptions, and we conjecture that they are in fact not necessary. 
    The assumption that $\int_X\log(f)d\mu < \infty$ was used in our proof in order to control the rate of growth of the length of most of the intervals $I_{D,B}$ as a function of $\ell$, the length of the blocks $D$ and $B$.
    This control was necessary in Theorem \ref{StrongestHotSpotForDDGBasicSequences}(i) to produce and bound the term $(1-\sigma)\tilde{f}(x_w)$ in Equation \eqref{EquationNeedingFiniteIntegral}.
    If the assumption that $b-a < \delta$ is dropped, then this simplifies Equation \eqref{GeneralizedDDGHotspotAssumption} to 

    \begin{equation}\label{GeneralizedDDGHotspotAssumption3}
    \limsup_{N\rightarrow\infty}\frac{\nu_{(a,b)}\left(y,N,Q;\mathcal{N}_\sigma\right)}{N} < C(b-a),
\end{equation}
and in this case Theorem \ref{StrongestHotSpotForDDGBasicSequences}(i) can be proven even when $\int_X\log(f)d\mu = \infty$.
In particular, the right hand side of Equation \eqref{EquationThatCouldBeSimplified} can easily be bounded above, even when $\int_X\log(f)d\mu = \infty$, as long as we do not have the exponent of $\sigma$.
However, this latter situation 
yields a result much closer to the Classical Hot Spot rather than the Bergelson-Vandehey Hot Spot.

In proving Theorem \ref{StrongestHotSpotForDDGBasicSequences}(ii), the main difficulty was being able to efficiently bound $\mu\times m(S_D)/\mu\times m(E_{D,B})$.
In general, the regions $S_D$ do not have a well understood shape (cf. Figure \ref{PartiallyFilledLevel3Grid}), but control on the lengths of the intervals $I_{D,B}$ allows us to obtain good bounds on $P_D(0) = \mu\times m(S_D\cap X_L)$.
Since we do not have a good understanding of the shape of $S_D$, we needed to assume the ergodicity of $(X,\mathscr{B},\mu,T)$ in order to have even more control on the length of $I_{D,B}$ than what we had in part (i).
Furthermore, we currently don't have efficient bounds on $P_D(1)/P_D(0)$, which is why Equation \eqref{GeneralizedDDGHotspotAssumption2} only has $CP_D$ instead of $CP_D^\sigma$. 
Nonetheless, we conjecture that if $Q$ is a deterministic dynamically generated basic sequences, and $y \in [0,1)$ and $C > 0$ are such that for every $\sigma \in (0,1)$ there exists $\ell_\sigma \in \mathbb{N}$ and $\mathcal{N}_\sigma \subseteq \mathbb{N}$ of upper density at most $1-\sigma$ such that for all $\ell \ge \ell_\sigma$ and all $D \in \mathbb{N}_0^\ell$ we have

\begin{equation}\label{GeneralizedDDGHotspotAssumption4}
        \limsup_{n\rightarrow\infty}\frac{N_n^Q(D,y;\mathcal{N}_\sigma)}{n} \le CP_D^\sigma,
    \end{equation}
then $y \in \mathcal{UN}(Q)$. 
\end{remark}

\begin{proof}[{Proof of Theorem~\ref{MainCorollary}}]
    Putting together Theorems~\ref{UN=UDN}, \ref{DN(Q)ImpliesUDN(Q)}, and \ref{StrongestHotSpotForDDGBasicSequences}(ii) the desired result follows.
\end{proof}

\section{Ratio Normality}\labs{RatioNormality}
\begin{definition}
    Let $Q = (q_n)_{n = 1}^\infty \in \mathbb{N}_{\ge 2}$ be a basic sequence. A number $y \in [0,1)$ is \textbf{$Q$-ratio normal} if for any $\ell \in \mathbb{N}$ and any $D_1,D_2 \in \mathbb{N}_0^\ell$ satisfying $\displaystyle\lim_{n\rightarrow\infty}\text{min}(Q_n(D_1),Q_n(D_2)) = \infty$, we have

    \begin{equation}
        \lim_{n\rightarrow\infty}\frac{N_n^Q(D_1,y)/Q_n(D_1)}{N_n^Q(D_2,y)/Q_n(D_2)} = 1.
    \end{equation}
    The set of all $Q$-ratio normal numbers is denoted by $\mathcal{RN}(Q)$. 
    If $Q$ is dynamically generated, a number $y \in [0,1)$ is \textbf{Uniformly $Q$-ratio normal} if for any $\ell \in \mathbb{N}$, any $B_i = (B_{i,1},\cdots,B_{i,\ell}), D_i = (D_{i,1},\cdots,D_{i,\ell}) \in \mathbb{N}_0^\ell$, $i = 1,2$, satisfying $D_{i,j} < B_{i,j}$ for $i = 1,2$ and $1 \le j \le \ell$, we have

    \begin{alignat*}{2}
        &\lim_{n\rightarrow\infty}\frac{N_n^Q(D_1,B_1,y)/Q_n(D_1,B_1)}{N_n^Q(D_2,B_2,y)/Q_n(D_2,B_2)} = 1.
    \end{alignat*}
\end{definition}

\begin{theorem}\label{RN=N}
    If $Q = (q_n)_{n = 1}^\infty$ is a dynamically generated basic sequence, then $\mathcal{RN}(Q) = \mathcal{N}(Q)$.
\end{theorem}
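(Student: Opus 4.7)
The plan is to handle the two inclusions separately; $\mathcal{N}(Q) \subseteq \mathcal{RN}(Q)$ is immediate from the definitions, so the content lies in $\mathcal{RN}(Q) \subseteq \mathcal{N}(Q)$. Fix $y \in \mathcal{RN}(Q)$, a length $\ell$, and a block $D_0 \in \mathbb{N}_0^\ell$ with $P_{D_0} > 0$; write $r_D(n) = N_n^Q(D,y)/Q_n(D)$, $p_D(n) = N_n^Q(D,y)/n$, and $q_D(n) = Q_n(D)/n$. A direct computation (summing $\mathcal{I}_{Q,i}(D)/(q_i\cdots q_{i+\ell-1})$ over $D$) shows that $\sum_D p_D(n) = \sum_D q_D(n) = 1$ for every $n$.

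First I would exploit the dynamical generation of $Q$ to restrict attention to blocks with $P_D > 0$. If $P_D = 0$ then Lemma~\ref{LimitsExistForDGBS} forces $\mu(E_B) = 0$ for every block of bases $B$ with $D < B$, whence condition (ii) of Lemma~\ref{WhenIsASequenceDynamicallyGenerated} prevents any such $B$ from appearing in $Q$; thus $Q_n(D) = N_n^Q(D,y) = 0$ for every $n$. Consequently the ratio-normality hypothesis applies to every pair $D_1, D_2$ with $P_{D_i} > 0$, and the two unit sums above range effectively over $\{D : P_D > 0\}$.

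Next, fix a subsequence $n_k$ along which $r_{D_0}(n_k) \to s^* \in [0,\infty]$; ratio normality gives $r_D(n_k) \to s^*$ for every $D$ with $P_D > 0$, so $p_D(n_k) = r_D(n_k)q_D(n_k) \to s^*P_D$, and $p_D \le 1$ forces $s^* < \infty$. Applying Fatou's lemma for counting measure on $\mathbb{N}_0^\ell$ to the identity $1 = \sum_D p_D(n_k)$ yields $1 \ge \sum_D s^* P_D = s^*$, i.e.\ the upper bound $s^* \le 1$. Since the argument below will apply to every subsequential limit, showing $s^* = 1$ in every case will finish the proof.

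The hard part will be the reverse bound $s^* \ge 1$: one must rule out mass escape to infinity in the distribution $(p_D(n_k))_D$ among blocks with small $P_D$, which is genuinely an issue when the bases $q_n$ are unbounded. For this I would use that the set $\{f > M\}$ is clopen (since $f$ is continuous into the discrete space $\mathbb{N}_{\ge 2}$), so $x$ is a generic point for its indicator. A block $D \notin F_M := [0,M-1]^\ell$ has some coordinate $d_j \ge M$ and can appear at position $i$ in the expansion of $y$ only if $q_{i+j} > M$, whence
\begin{equation*}
    \limsup_n \sum_{D \notin F_M} p_D(n) \;\le\; \mu\!\left(\bigcup_{j=1}^\ell T^{-j}\{f > M\}\right) \;\le\; \ell\,\mu(\{f > M\}),
\end{equation*}
and a parallel computation gives $\sum_{D \in F_M} P_D \ge 1 - \ell\,\mu(\{f > M\})$. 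Taking $k\to\infty$ in the finite sum $\sum_{D \in F_M} p_D(n_k)$, which converges to $s^* \sum_{D \in F_M} P_D$, and combining with the escape bound yields $s^* \ge 1 - \ell\,\mu(\{f > M\})$; letting $M\to\infty$ then forces $s^* \ge 1$.
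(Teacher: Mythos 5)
Your proof is correct and follows essentially the same route as the paper's: both use ratio normality to tie every block frequency to a single unknown scalar times $P_D$, and then pin that scalar down to $1$ by combining the normalizations $\sum_D p_D(n) = \sum_D P_D = 1$ with a truncation to blocks of bounded digits, whose complement has small density because $\mu(\{f > M\}) \to 0$. The paper packages the subsequence step as a liminf/limsup contradiction with the reference block $0_\ell$ rather than your subsequential-limit-plus-Fatou argument, but the mathematical content is the same.
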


\begin{proof}
    It is clear that $\mathcal{N}(Q) \subseteq \mathcal{RN}(Q)$, so we proceed to show that $\mathcal{RN}(Q) \subseteq \mathcal{N}(Q)$. To this end, let $y \in \mathcal{RN}(Q)$ and $\ell \in \mathbb{N}$ both be arbitrary, let $0_\ell = \underbrace{(0,\cdots,0)}_{\ell}$, and let $y = 0.y_1y_2\cdots y_n\cdots_Q$. 
    Since $Q$ is dynamically generated, Lemma \ref{LimitsExistForDGBS} tells us that for $D \in \mathbb{N}_0^\ell$ we may define 

    \begin{equation}
        P_D = \lim_{n\rightarrow\infty}\frac{Q_n(D)}{n}\text{, and that }\sum_{D \in \mathbb{N}_0^\ell}P_D = 1.
    \end{equation}
    Since $y \in \mathcal{RN}(Q)$, we see that for any $D \in \mathbb{N}_0^\ell$, we may define

    \begin{equation}
        T_D := \lim_{N\rightarrow\infty}\frac{|\{ 1 \le n \le N\ |\ D = (y_n,y_{n+1},\cdots,y_{n+\ell-1})\}|}{|\{ 1 \le n \le N\ |\ 0_\ell = (y_n,y_{n+1},\cdots,y_{n+\ell-1})\}|} = \lim_{n\rightarrow\infty}\frac{Q_n(D)}{Q_n(0_\ell)} = \frac{P_D}{P_{0_\ell}},
    \end{equation}
    and we see that for any $(H_q)_{q = 1}^\infty \subseteq \mathbb{N}$ we have 

    \begin{alignat*}{2}
        &\lim_{q\rightarrow\infty}\frac{1}{H_q}|\{1 \le n \le H_q\ |\ D = (y_n,y_{n+1},\cdots,y_{n+\ell-1})\}|\\
        =& T_D\lim_{q\rightarrow\infty}\frac{1}{H_q}|\{1 \le n \le H_q\ |\ 0_\ell = (y_n,y_{n+1},\cdots,y_{n+\ell-1})\}|,
    \end{alignat*}
    provided that at least one of the limits exists. 
    Now let us assume for the sake of contradiction that there exists $0 \le a < b \le 1$ and $(N_q)_{q = 1}^\infty,(M_q)_{q = 1}^\infty \subseteq \mathbb{N}$ such that

    \begin{alignat*}{2}
        a& = \liminf_{N\rightarrow\infty}\frac{1}{N}|\{ 1 \le n \le N\ |\ 0_\ell = (y_n,\cdots,y_{n+\ell-1})\}|\\
        &= \lim_{q\rightarrow\infty}\frac{1}{N_q}|\{ 1 \le n \le N_q\ |\ 0_\ell = (y_n,\cdots,y_{n+\ell-1})\}|\text{, and}\\
        b& = \limsup_{N\rightarrow\infty}\frac{1}{N}|\{ 1 \le n \le N\ |\ 0_\ell = (y_n,\cdots,y_{n+\ell-1})\}|\\
        &= \lim_{q\rightarrow\infty}\frac{1}{M_q}|\{ 1 \le n \le M_q\ |\ 0_\ell = (y_n,\cdots,y_{n+\ell-1})\}|.
    \end{alignat*}
    Since $Q$ is dynamically generated, let $M \in\mathbb{N}_{\ge 2}$ be such that

    \begin{equation}
        \lim_{N\rightarrow\infty}\frac{1}{N}|\{1 \le n \le N\ |\ q_{n+i} \le M\ \forall\ 1 \le i \le \ell\}| > \frac{a+b}{2b}.
    \end{equation}
    We now see that

    \begin{alignat*}{2}
        \frac{a+b}{2b} &\le \sum_{w \in [0,M)^\ell}\lim_{q\rightarrow\infty}\frac{1}{N_q}|\{1 \le n \le N_q\ |\ w = (y_n,\cdots,y_{n+\ell-1})\}|\\
        &= \sum_{w \in [0,M)^\ell}T_w\lim_{q\rightarrow\infty}\frac{1}{N_q}|\{1 \le n \le N_q\ |\ 0_\ell = (y_n,\cdots,y_{n+\ell-1})\}|\\
        &= \sum_{w \in [0,M)^\ell}T_wa\text{, hence }a > 0\text{ and}\\
        1 < \frac{a+b}{2a} &\le \sum_{w \in [0,M)^\ell}T_wb = \sum_{w \in [0,M)^\ell}T_w\lim_{q\rightarrow\infty}\frac{1}{M_q}|\{1 \le n \le M_q\ |\ 0_\ell = (y_n,\cdots,y_{n+\ell-1})\}|\\
        &= \sum_{w \in [0,M)^\ell}\lim_{q\rightarrow\infty}\frac{1}{M_q}|\{1 \le n \le M_q\ |\ w = (y_n,\cdots,y_{n+\ell-1})\}|,
    \end{alignat*}
    which yields the desired contradiction. 
    It follows that

    \begin{equation}
        a := \lim_{N\rightarrow\infty}\frac{1}{N}|\{1 \le n \le N\ |\ 0_\ell = (y_n,\cdots,y_{n+\ell-1})\}|
    \end{equation}
    is well defined, and that for any $D \in \mathbb{N}_0^\ell$, we have

    \begin{alignat}{2}
        &\lim_{N\rightarrow\infty}\frac{1}{N}|\{1 \le n \le N\ |\ D = (y_n,\cdots,y_{n+\ell-1})\}| = T_Da.\label{ConsequenceOfRationNormalityEquation}
    \end{alignat}
    Now let us assume for the sake of contradiction that $\epsilon := |a-P_{0_\ell}| > 0$. 
    Let us first consider the case in which $\epsilon = P_{0_\ell}-a$. 
    We observe that for any $M \in\mathbb{N}_{\ge 2}$ we have

    \begin{alignat}{2}
        &  \lim_{N\rightarrow\infty}\frac{1}{N}|\{1 \le n \le N\ |\ q_{n+i} \le M\ \forall\ 1 \le i \le \ell\}|\label{BeginningOfRatioNormalityCalculation}\\
        \le& \sum_{w \in [0,M)^{\ell}}\lim_{N\rightarrow\infty}\frac{1}{N}|\{1 \le n \le N\ |\ w = (y_n,\cdots,y_{n+\ell-1})\}| = \sum_{w \in [0,M)^\ell}T_wa\\
        =& -\epsilon\sum_{w \in [0,M)^\ell}T_w+\sum_{w \in [0,M)^\ell}T_wP_{0_\ell} = -\epsilon\sum_{w \in [0,M)^\ell}\frac{P_w}{P_{0_\ell}}+\sum_{w \in [0,M)^\ell}P_w.\label{EndOfRatioNormalityCalcuation}
    \end{alignat}
    We see that as $D'\rightarrow\infty$, the quantity in Equation \eqref{BeginningOfRatioNormalityCalculation} approaches $1$ and the quantities in Equation \eqref{EndOfRatioNormalityCalcuation} approaches $1-\epsilon P_{0_\ell}^{-1}$, which yields the desired contradiction in this case. 
    Next, let us consider the case in which $\epsilon = a-P_{0_\ell}$. 
    We see that for any $M \in \mathbb{N}_{\ge 2}$ we have
    
    \begin{alignat*}{2}
        1& \ge \lim_{M\rightarrow\infty}\sum_{w \in [0,M)^\ell}\lim_{N\rightarrow\infty}\frac{1}{N}|\{1 \le n \le N\ |\ w = (y_n,\cdots,y_{n+\ell-1})\}|\\
        &= \lim_{M\rightarrow\infty}\sum_{w \in [0,M)^\ell}P_w\frac{a}{P_{0_\ell}} = \frac{a}{P_{0_\ell}} > 1,
    \end{alignat*}
    which yields the desired contradiction in this case as well. It follows that we must have $a = P_{0_\ell}$.
    The desired result now follows from Equation \eqref{ConsequenceOfRationNormalityEquation}.
\end{proof}

The proof of Theorem \ref{URN=UN} is almost identical to that of Theorem \ref{RN=N}, but we include it for the sake of completeness.

\begin{theorem}\label{URN=UN}
    If $Q = (q_n)_{n = 1}^\infty$ is a dynamically generated basic sequence, then $\mathcal{URN}(Q) = \mathcal{UN}(Q)$.
\end{theorem}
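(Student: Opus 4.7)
The plan is to mirror the proof of Theorem \ref{RN=N}, with the role of a digit block $D$ replaced by a pair $(D,B)$ consisting of a digit block and a base block, and with the normalization $Q_n(D)$ replaced by $Q_n(D,B)$. The inclusion $\mathcal{UN}(Q) \subseteq \mathcal{URN}(Q)$ is immediate since if $N_n^Q(D_i,B_i,y)/Q_n(D_i,B_i) \to 1$ for $i=1,2$ then their ratio also tends to $1$. For the reverse inclusion, fix $y \in \mathcal{URN}(Q)$ and $\ell \in \mathbb{N}$, and pick a reference pair $(D^*,B^*)$ with $D^* = (0,\ldots,0)$ of length $\ell$ and $B^*$ any base block of length $\ell$ with $\mu(E_{B^*}) > 0$, so that $P_{D^*,B^*} > 0$ by Lemma \ref{LimitsExistForDGBS}. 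Lemma \ref{LimitsExistForDGBS}(ii) gives $Q_n(D,B)/Q_n(D^*,B^*) \to P_{D,B}/P_{D^*,B^*} =: T_{D,B}$ for every $(D,B)$ with $D < B$ and $\mu(E_B) > 0$, and the uniform ratio normality hypothesis on $y$ then upgrades this to
\[
    \lim_{n\to\infty} \frac{N_n^Q(D,B,y)}{N_n^Q(D^*,B^*,y)} = T_{D,B}.
\]

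Next, pass to a subsequence $(N_q)$ along which $N_{N_q}^Q(D^*,B^*,y)/N_q$ converges to some $a \in [0,1]$; by compactness we may do this to realize either $\liminf$ or $\limsup$ of $N_n^Q(D^*,B^*,y)/n$, so it suffices to show $a = P_{D^*,B^*}$. The previous display then gives $N_{N_q}^Q(D,B,y)/N_q \to a\,T_{D,B}$ for every admissible $(D,B)$. For any $M \in \mathbb{N}_{\ge 2}$ one has the exact identity
\[
    \sum_{B \in [2,M]^\ell}\sum_{D < B} N_N^Q(D,B,y) = \#\{j \in [1,N] : q_{j+i} \le M,\ 0 \le i < \ell\},
\]
since each position $j$ for which all of $q_j,\ldots,q_{j+\ell-1}$ are bounded by $M$ contributes to exactly one summand on the left. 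Because $x$ is a generic point for $T$ and $U_M := \bigcup_{B \in [2,M]^\ell} E_B$ is clopen in $X$ (a finite union of the clopen cells $E_B$), the right-hand side divided by $N$ converges to $\mu(U_M) = \sum_{B \in [2,M]^\ell} \mu(E_B)$. The left-hand side is a finite sum, so we may interchange limit and finite sum along $(N_q)$ and obtain, using Lemma \ref{LimitsExistForDGBS}(iv),
\[
    a \sum_{B \in [2,M]^\ell}\sum_{D<B} T_{D,B} \;=\; \sum_{B \in [2,M]^\ell} \mu(E_B) \;=\; \sum_{B \in [2,M]^\ell}\sum_{D<B} P_{D,B}.
\]
Substituting $T_{D,B} = P_{D,B}/P_{D^*,B^*}$ and choosing $M$ large enough that $B^* \in [2,M]^\ell$ (so both sides are positive) forces $a = P_{D^*,B^*}$.

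Consequently, every subsequential limit of $N_n^Q(D^*,B^*,y)/n$ equals $P_{D^*,B^*}$, so the full limit exists and equals $P_{D^*,B^*}$. Combining this with the ratio limit above yields $N_n^Q(D,B,y)/n \to P_{D,B}$ for every $(D,B)$ with $P_{D,B} > 0$, which by the characterization in \eqref{UniformNormalityForDGBSEquation} means $y \in \mathcal{UN}(Q)$. The only mildly delicate point, exactly as in the proof of Theorem \ref{RN=N}, is the need to truncate to base blocks with entries bounded by $M$: for a general dynamically generated $Q$ there are infinitely many base blocks of length $\ell$, so one cannot interchange limit and sum without first cutting off by $M$ and then letting $M \to \infty$ at the end.
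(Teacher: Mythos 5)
Your proof is correct and follows essentially the same route as the paper's: fix a reference pair $(0_\ell,B_0)$ of positive measure, use uniform ratio normality together with Lemma \ref{LimitsExistForDGBS} to get $N_n^Q(D,B,y)/N_n^Q(0_\ell,B_0,y)\to P_{D,B}/P_{0_\ell,B_0}$, and then pin down the normalized count of the reference pair by truncating to base blocks with entries at most $M$ and using genericity of $x$ together with $\sum_{D<B}P_{D,B}=\mu(E_B)$. The only difference is organizational: you show directly that every subsequential limit of $N_n^Q(D^*,B^*,y)/n$ equals $P_{D^*,B^*}$ in one step, whereas the paper first rules out $\liminf<\limsup$ by contradiction and then identifies the limit; your version is a mild streamlining of the same argument.
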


\begin{proof}
    It is clear that $\mathcal{UN}(Q) \subseteq \mathcal{URN}(Q)$, so we proceed to show that $\mathcal{URN}(Q) \subseteq \mathcal{UN}(Q)$.
    To this end, let $y \in \mathcal{URN}(Q)$ and $\ell \in \mathbb{N}$ both be arbitrary, let $0_\ell = \underbrace{(0,\cdots,0)}_{\ell}$, let $B_0 = (q_1,\cdots,q_\ell)$, and let $y = 0.y_1y_2\cdots y_n\cdots_Q$. 
    Since $Q$ is dynamically generated, Lemma \ref{LimitsExistForDGBS} tells us that for any $D \in \mathbb{N}_0^\ell$ and $B \in \mathbb{N}_{\ge 2}^\ell$ we may define 

    \begin{equation}
        P_{D,B} = \lim_{n\rightarrow\infty}\frac{Q_n(D,B)}{n}\text{, and that }\sum_{B \in \mathbb{N}_{\ge 2}^\ell}P_{D,B} = P_D.
    \end{equation}
    Since $x \in \mathcal{RN}(Q)$, we see that for any $D \in \mathbb{N}_0^\ell$ and $B \in \mathbb{N}_{\ge 2}^\ell$ we may define

    \begin{equation}
        T_{D,B} := \lim_{n\rightarrow\infty}\frac{N_n^Q(D,B,y)}{N_n^Q(0_\ell,B_0,y)} = \lim_{n\rightarrow\infty}\frac{Q_n(D,B)}{Q_n(0_\ell,B_0)} = \frac{P_{D,B}}{P_{0_\ell,B_0}},
    \end{equation}
    and we see that for any $(H_q)_{q = 1}^\infty \subseteq \mathbb{N}$ we have

    \begin{equation}
        \lim_{q\rightarrow\infty}N_{H_q}^Q(D,B,y) = T_{D,B}\lim_{q\rightarrow\infty}N_{H_q}^Q(0_\ell,B_0,y),
    \end{equation}
    provided that at least one of the limits exists.
    Now let us assume for the sake of contradiction that there exists $0 \le a < b \le 1$ and $(G_q)_{q = 1}^\infty,(M_q)_{q = 1}^\infty \subseteq \mathbb{N}$ such that 

    \begin{alignat*}{2}
        &a = \liminf_{n\rightarrow\infty}N_{n}^Q(0_\ell,B_0,y) = \lim_{q\rightarrow\infty}N_{G_q}^Q(0_\ell,B_0,y)\text{, and}\\
        &b = \limsup_{n\rightarrow\infty}N_{n}^Q(0_\ell,B_0,y) = \lim_{q\rightarrow\infty}N_{M_q}^Q(0_\ell,B_0,y).
    \end{alignat*}
    Since $Q$ is dynamically generated, let $U \in \mathbb{N}_{\ge 2}$ be such that

    \begin{equation}
        \lim_{N\rightarrow\infty}\frac{1}{N}|\{1 \le n \le N\ |\ q_{n+i} \le U\ \forall\ 1 \le i \le \ell\}| > \frac{a+b}{2b}.
    \end{equation}
    We now see that

    \begin{alignat*}{2}
        \frac{a+b}{2b} &\le \sum_{D \in [0,U)^\ell}\sum_{B \in [2,U]^\ell}\lim_{q\rightarrow\infty}N_{G_q}^Q(D,B,y) = \sum_{D \in [0,U)^\ell}\sum_{B \in [2,U]^\ell}T_{D,B}a\text{, hence }a > 0\text{, and}\\
        1 < \frac{a+b}{2a}&\le \sum_{D \in [0,U)^\ell}\sum_{B \in [2,U]^\ell}T_{D,B}b = \sum_{D \in [0,U)^\ell}\sum_{B \in [2,U]^\ell}\lim_{q\rightarrow\infty}N_{M_q}^Q(D,B,y),
    \end{alignat*}
    which yields the desired contradiction.
    It follows that 

    \begin{equation}
        a = \lim_{n\rightarrow\infty}N_n^Q(0_\ell,B_0,y)
    \end{equation}
    is well defined, and that for any $D \in \mathbb{N}_0^\ell$ and $B \in \mathbb{N}_{\ge 2}^\ell$, we have

    \begin{equation}\label{URNImpliesUNEquation}
        \lim_{n\rightarrow\infty}N_n^Q(D,B,y) = T_{D,B}a.
    \end{equation}
    We now see that for each $U \ge 2$ we have

    \begin{alignat}{2}
        &\lim_{N\rightarrow\infty}\frac{1}{N}|\{1 \le n \le N\ |\ q_{n+i} \le U\ \forall\ 1\le i \le \ell\}|\label{Equation1ForURN=UN}\\
        =&\sum_{D \in [0,U)^\ell}\sum_{B \in [2,U]^\ell}\lim_{n\rightarrow\infty}N_n^Q(D,B,y) = \frac{a}{P_{0_\ell,B_0}}\sum_{D \in [0,U)^\ell}\sum_{B \in [2,U]^\ell}P_{D,B}.\label{Equation2ForURN=UN}
    \end{alignat}
    Since 

    \begin{equation}
        \sum_{D \in \mathbb{N}_0^\ell}\sum_{B \in \mathbb{N}_{\ge 2}^\ell}P_{D,B} = 1, 
    \end{equation}
    We see that as $U\rightarrow\infty$, the quantity in Equation \eqref{Equation1ForURN=UN} approaches 1, while the rightmost quantity in Equation \eqref{Equation2ForURN=UN} approaches $\frac{a}{P_{0_\ell,B_0}}$, hence $a = P_{0_\ell,B_0}$.
    The desired result now follows from Equation \eqref{URNImpliesUNEquation}.
\end{proof}

\section{$g$-power basic sequences}\labs{gpower}
In the previous section we found some classes of basic sequences $Q$ for which $\mathcal{N}(Q) = \mathcal{DN}(Q) = \mathcal{UN}(Q)$. In this section we will assume that $Q$ is a $g$-power sequence and establish some relations between $\mathcal{N}(Q),\mathcal{DN}(Q),\mathcal{UN}(Q)$, and $\mathcal{N}_g$.

\begin{lemma}\label{DensityCalculationLemma}
    Let $g \in \mathbb{N}_{\ge 2}$ and let $Q = (q_n)_{n = 1}^\infty \in \left(\{g^n\}_{n = 1}^\infty\right)^\mathbb{N}$ be generated by $(X,\mathscr{B},\mu,T,f,x)$ with $I := \int_X\log_gfd\mu < \infty$. Consider the set $A = (a_n)_{n = 0}^\infty$ determined by $a_0 = 0$ and $a_n = a_{n-1}+\log_g(q_n)$. For any $k \in \mathbb{N}_0$, the set $A_k := A\cup(A+1)\cup\cdots\cup(A+k)$ has natural density

    \begin{equation}
        d(A_k) = \frac{1}{I}\int_X\text{min}(k+1,\log_g(f))d\mu = 1-\frac{1}{I}\int_{\log_g(f) > k+1}(\log_g(f)-k-1)d\mu.
    \end{equation}
\end{lemma}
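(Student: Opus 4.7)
The plan is to recast the count $|A_k \cap [0, M)|$ via the Birkhoff sums of $\log_g f$ at the generic point $x$, and then apply genericity to extract the density.

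Set $b_n := \log_g q_n = \log_g f(T^n x)$, a positive integer, so that $a_n = \sum_{i=1}^n b_i$. For an integer $m \in [a_{n-1}, a_n)$, the condition $m \in A_k$ is equivalent to $m - a_{n-1} \le k$, which contributes exactly $\min(b_n, k+1)$ integers from each such gap; summing over $n = 1, \ldots, N$ yields
\begin{equation*}
|A_k \cap [0, a_N)| = \sum_{n=1}^{N} \min(b_n, k+1).
\end{equation*}

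Both $\log_g f$ and the bounded continuous function $\min(k+1, \log_g f)$ lie in $C(X) \cap L^1(X,\mu)$, the former thanks to the hypothesis $\int_X \log f\, d\mu < \infty$. Genericity of $x$ therefore gives
\begin{equation*}
\frac{a_N}{N} \longrightarrow I, \qquad \frac{1}{N}\sum_{n=1}^N \min(b_n, k+1) \longrightarrow \int_X \min(k+1, \log_g f)\, d\mu.
\end{equation*}
To upgrade this subsequential statement to the natural density of $A_k$, I would sandwich $|A_k \cap [0,M)|/M$ for arbitrary $M$ between its values at the surrounding $a_N$ and $a_{N+1}$. The only error term that needs attention is one of order $b_{N+1}/N$; but $b_{N+1}/N \to 0$ is immediate from $a_N/N \to I$ together with $b_{N+1} = a_{N+1} - a_N$. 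This establishes the first equality.

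The second equality follows from the pointwise identity $\min(k+1, y) = y - (y - k - 1)^+$ for $y \ge 0$, which rewrites $\int_X \min(k+1, \log_g f)\, d\mu$ as $I - \int_{\{\log_g f > k+1\}}(\log_g f - k - 1)\, d\mu$; dividing by $I$ then produces the stated form. The only subtlety in the whole argument is that $\log_g f$ may be unbounded, but the assumption $\int_X \log f\, d\mu < \infty$ is precisely what places it in $L^1$ so that the generic point property applies. I foresee no substantive obstacle beyond this bookkeeping.
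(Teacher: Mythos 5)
Your proof is correct, and it takes a genuinely different (and more elementary) route than the paper. The paper builds the discrete suspension flow $\mathcal{X}'=(X',\mathscr{B}',\mu',T')$ of $\mathcal{X}$ under the roof $\log_g(f)$, identifies $A_k$ with the visit times of the lifted orbit to the union $E(k)$ of the bottom $\min(m,k+1)$ levels above each $E_m$, and reads off the density as $\mu'(E(k))$; your argument skips the tower entirely and instead counts directly that each gap $[a_{n-1},a_n)$ contributes exactly $\min(b_n,k+1)$ elements of $A_k$, then divides the two Birkhoff averages $\frac1N\sum\min(b_n,k+1)\to\int_X\min(k+1,\log_g f)\,d\mu$ and $\frac{a_N}{N}\to I$ at the generic point $x$. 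Both functions are indeed in $C(X)\cap L^1(X,\mu)$ (the truncation is bounded, and $\log_g f\in L^1$ is the hypothesis), so genericity applies exactly as you use it, and your sandwich between consecutive $a_N$ with the error $b_{N+1}/a_N\to 0$ (using $I\ge\log_g 2>0$) correctly upgrades the subsequential limit to the natural density. Your route buys a shorter, self-contained proof that sidesteps the mildly delicate point of checking that $x$ remains generic for the tower system, which the paper asserts somewhat briskly; the paper's route buys a reusable object, since the same suspension $\mathcal{X}'$ and the identification of $A_k$ with visits to a fixed clopen set are invoked again in Lemma~\ref{DistributionNormalImpliesgNormal} and in the disjointness argument proving Theorem~\ref{MainTheoremForPowersOf2}, where a purely combinatorial count would not suffice.
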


\begin{proof}
    Consider the m.p.s. $\mathcal{X}' = (X',\mathscr{B}',\mu',T')$ that is the (discrete) flow of $\mathcal{X}$ under the function $\log_g(f)$. 
    To be more precise, let $E_m = f^{-1}(\{g^m\})$ so that we have

    \begin{alignat*}{2}
        &X' = \bigsqcup_{m = 1}^\infty\bigsqcup_{j = 1}^mE_m\times\{j\},\ \mu'(E_m\times\{j\}) = \frac{1}{m}\mu(E_m)\text{ and}\\
        &T'(x',j) = \begin{cases}
            (Tx',1) &\text{ if }x \in E_m\times\{m\}\text{ for some }m\\
            (x',j+1) &\text{ else}.
        \end{cases},
    \end{alignat*}
    and $\mathscr{B}'$ is the natural $\sigma$-algebra. 
    Letting $A_k = (a_{k,n})_{n = 1}^\infty$, $E = \bigcup_{m = 1}^\infty E_{m,1}$, and $x \in E$ correspond to $x \in X$ (by abuse of notation), we see that $a \in A_k$ if and only if
    
    \begin{equation}
        T^ax \in \bigsqcup_{m = 1}^\infty\bigsqcup_{j = 1}^{\text{min}(m,k+1)}E_{m,j} =: E(k).
    \end{equation}
    We also see that $x$ is a generic point for each $\mathbbm{1}_{E_k}$, and hence for each $\mathbbm{1}_{E(k)}$ as well. 
    The genericity of $x$ shows us that

    \begin{equation*}
        d(\{m \in \mathbb{N}\ |\ T^mx \in E(k)\}) = \lim_{M\rightarrow\infty}\frac{1}{M}\sum_{m = 1}^M\mathbbm{1}_{E(k)}\left(T^mx\right) = \mu'(E(k)) = \frac{1}{I}\int_X\text{min}(k+1,\log_g(f))d\mu.
    \end{equation*}
\end{proof}

\begin{lemma}\label{DistributionNormalImpliesgNormal}
    Let $g \in \mathbb{N}_{\ge 2}$ and let $Q = (q_n)_{n = 1}^\infty \in \left(\{g^n\}_{n = 1}^\infty\right)^{\mathbb{N}}$ be a basic sequence generated by $(X,\mathscr{B},\mu,T,f,x)$ with $\int_X\log(f)d\mu < \infty$.
    We have $\mathcal{DN}(Q) \subseteq \mathcal{N}_g$.
\end{lemma}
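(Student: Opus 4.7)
The plan is to show that $(g^n y)_{n\geq 1}$ is uniformly distributed mod $1$ and then invoke Wall's criterion. Since $q_n=g^{a_n}$ with $a_n:=\log_g(q_n)\geq 1$, we have $q_1\cdots q_n=g^{s_n}$ where $s_n:=a_1+\cdots+a_n$, so the hypothesis $y\in\mathcal{DN}(Q)$ is exactly the statement that $(g^{s_n}y)_{n\geq 1}$ is u.d. mod $1$, and for each fixed $j\geq 0$ the shifted sequence $(g^{s_n+j}y)=(M_g^j\, g^{s_n}y)$ is also u.d. mod $1$ because $M_g^j$ preserves Lebesgue measure on $[0,1)$. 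Writing $z_m:=g^{s_m}y$, I will decompose the Cesàro sums for a continuous periodic $\phi$ as
\begin{equation*}
\frac{1}{s_M}\sum_{n=1}^{s_M}\phi(g^n y)=\frac{1}{s_M}\sum_{m=0}^{M-1}\sum_{j=1}^{a_{m+1}}\phi(g^j z_m).
\end{equation*}

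The first step is to use Lemma \ref{DensityCalculationLemma}: the ``near-marker'' set $A_k=A\cup(A+1)\cup\cdots\cup(A+k)$ satisfies $d(A_k)\to 1$ as $k\to\infty$ precisely because $\int_X\log_g(f)d\mu<\infty$ together with dominated convergence forces $\int_X\max(\log_g(f)-k,0)d\mu\to 0$. By the genericity of $x$ for the continuous function $(\log_g f-K)^+$ (continuity follows from $f$ taking values in $\mathbb{N}_{\geq 2}$ with the discrete topology, which makes each $E_a=f^{-1}(\{g^a\})$ clopen), the contribution of ``deep interior'' positions $n=s_m+j$ with $j>K$ to the normalized sum is at most $\|\phi\|_\infty\cdot I^{-1}\int_X(\log_g f-K)^+d\mu$, which is negligible for $K$ large.

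The second step handles the remaining ``head'' positions using the disjoint partition $\mathbb{N}_0=\bigsqcup_{j\geq 0}B'_j$ with $B'_0=A$ and $B'_j=\{s_m+j:a_{m+1}>j\}$ for $j\geq 1$. For each fixed $j\leq K$, the relevant sum becomes $\sum_{m:a_{m+1}>j,\,m<M}\phi(g^j z_m)$; u.d. of $(g^j z_m)$ would give the right main term if the subsequence condition $a_{m+1}>j$ were absent. The main obstacle is therefore handling this dynamically-defined selection: showing that the average $\frac{1}{M}\sum_{m:a_{m+1}>j}\phi(g^j z_m)$ converges to $d_j\int\phi$ (where $d_j=\mu(\{\log_g f>j\})$) is essentially a statement of joint equidistribution of $(x,y)$ in the skew product, i.e., a form of $\mathcal{UDN}(Q)$. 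To bypass the stronger hypothesis, I will use that $\mathbbm{1}_{E^{(j)}}\in C(X)$ (since $E^{(j)}=\{f>g^j\}$ is clopen), so the weight $\mathbbm{1}[a_{m+1}>j]=\mathbbm{1}_{E^{(j)}}(T^{m+1}x)$ is the value of a continuous function on $X$; coupled with the $\mathcal{DN}$ hypothesis (which controls the $[0,1)$-marginal of the orbit empirical measure) and the automatic genericity of $(T^m x)$ for $\mu$, this is enough to compute the desired limit for product-form test functions $(x',z)\mapsto\mathbbm{1}_{E_a}(Tx')\phi(g^j z)$, which a straightforward approximation argument (truncating at finitely many $a$'s and using $\int\log_g f d\mu<\infty$ for the tail) reduces the general case to.

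Combining the decomposition $A_K=\bigsqcup_{j=0}^{K-1}B'_j$, the identity $\sum_{j=0}^{K-1}d_j=I\cdot d(A_K)$, and the above individual convergences, one gets $\frac{1}{s_M}\sum_{n\in A_K\cap[1,s_M]}\phi(g^n y)\to d(A_K)\int_0^1\phi$. Letting $M\to\infty$ and then $K\to\infty$, the residual positions in the complement of $A_K$ have density at most $1-d(A_K)\to 0$ and hence contribute at most $\|\phi\|_\infty(1-d(A_K))$ to the average; passing to general $N$ along the subsequence $N=s_M$ is handled by the standard sandwich argument (the boundary error between $N$ and the nearest $s_M$ is $O(a_{M+1}/N)$, which vanishes by genericity of $x$ for $\log_g f$). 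Thus $\frac{1}{N}\sum_{n=1}^N\phi(g^n y)\to\int_0^1\phi(t)dt$ for every continuous periodic $\phi$, so by Wall's theorem applied to $M_g$, $y\in\mathcal{N}_g$.

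The hardest step is the one described above: justifying the treatment of the dynamically-weighted averages $\sum_{m:a_{m+1}>j}\phi(g^j z_m)$ using only the marginal hypothesis $\mathcal{DN}(Q)$. Everything else -- the decomposition, the density bookkeeping, and the tail estimate -- is direct once Lemma \ref{DensityCalculationLemma} is in hand.
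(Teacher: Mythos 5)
Your overall strategy (prove $(g^ny)$ is u.d.\ mod $1$ directly and invoke Wall's theorem) is not the paper's, and the step you yourself flag as the hardest is where the argument breaks. To evaluate $\frac{1}{M}\sum_{m<M}\mathbbm{1}_{E^{(j)}}(T^{m+1}x)\,\phi(g^jz_m)$ you claim that continuity of $\mathbbm{1}_{E^{(j)}}$, genericity of $x$, and the $\mathcal{DN}(Q)$ hypothesis ``controlling the $[0,1)$-marginal'' suffice. They do not: knowing that the $X$-marginal of the orbit empirical measures converges to $\mu$ and that the $[0,1)$-marginal converges to Lebesgue does not determine the limit of the integral of a product test function --- for that you need the joint empirical measure on $X\times[0,1)$ to converge to the product measure, which is exactly the assertion $y\in\mathcal{UDN}(Q)$. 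The paper proves $\mathcal{DN}(Q)=\mathcal{UDN}(Q)$ only for deterministic (zero-entropy) $Q$ (Theorem \ref{DN(Q)ImpliesUDN(Q)}), and that hypothesis is absent from this lemma; moreover Example \ref{DN(Q)ButNOtN(Q)} exhibits a non-deterministic $2$-power sequence with $\mathcal{DN}(Q)\setminus\mathcal{N}(Q)\neq\emptyset$, hence $\mathcal{DN}(Q)\setminus\mathcal{UDN}(Q)\neq\emptyset$, so the decorrelation you need between the selection weights $\mathbbm{1}[a_{m+1}>j]$ (determined by $x$) and the points $z_m$ (determined jointly by $x$ and $y$) is genuinely unavailable from the stated hypotheses. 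Your exact-equidistribution route therefore cannot be completed as written.

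The paper's proof avoids this obstruction by never computing the selected averages exactly. Every position $i\in A_k$ has the form $a_{1,n}+t$ with $0\le t\le k$, so
\begin{equation*}
\nu_s\left(y,N;A_k^c\right)\le\sum_{t}\#\left\{n\ :\ M_g^{a_{1,n}+t}y\in I_s\right\},
\end{equation*}
and each summand is a count over the full, unselected sequence $(M_g^{a_{1,n}+t}y)_n$, which is u.d.\ because $y\in\mathcal{DN}(Q)$; this gives roughly $kN/g^k$. The deliberate overcounting (ignoring which offset $t$ actually occurs at each position, i.e., ignoring the selection entirely) costs only a factor of $k$, which is absorbed by the $Cg^{\sigma k}/g^k$ slack in the Bergelson--Vandehey Hot Spot Theorem \ref{StrongestHotSpot}, with $A_k^c$ serving as the exceptional set $\mathcal{N}_{\sigma,k}$ of small density furnished by Lemma \ref{DensityCalculationLemma}. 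If you want to salvage your argument, replace the exact limits for the selected averages by these one-sided bounds and switch from Wall's criterion to Theorem \ref{StrongestHotSpot}; that is precisely the paper's proof.
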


\begin{proof}
    Consider the c.m.p.s. $\mathcal{X}' = (X',\mathscr{B}',\mu',T')$ that is the flow of $\mathcal{X}$ under the function $\log_g(f)$, and let $A_k = (a_{k,n})_{n = 1}^\infty$ as in Lemma \ref{DensityCalculationLemma}.
    Let $\sigma > 0$ be arbitrary, let $k_0 = k_0(\sigma) \in \mathbb{N}$ be such that $d(A_k) > 1-\sigma$ and $b^{\sigma k} \ge k$ for all $k \ge k_0$, and let $\mathcal{N}_{\sigma,k} = A_k^c$. 
    We see that $\left(M_g^{a_{1,n}}y\right)_{n = 1}^\infty$ is uniformly distributed since $y$ was assumed to be $Q$-distribution normal, so $\left(M_g^{a_{1,n}+t}y\right)_{n = 1}^\infty$ is also uniformly distributed for any $t \in \mathbb{N}$ since $M_g$ maps uniformly distributed sequences to uniformly distributed sequences.
    Now let $s = [d_1,d_2,\cdots,d_k]$ be an arbitrary word of length $k \ge k_0$ and observe that for each $y \in \mathcal{DN}(Q)$ we have

    \begin{alignat*}{2}
        &\limsup_{N\rightarrow\infty}\frac{\nu_s\left(y,N;\mathcal{N}_{\sigma,k}\right)}{N} = \limsup_{N\rightarrow\infty}\frac{1}{N}\left|\left\{n \in [1,N]\setminus\mathcal{N}_{\sigma,k}\ |\ M_g^ny \in \Bigg[\sum_{i = 1}^k\frac{d_i}{g^i},\sum_{i = 1}^k\frac{d_i}{g^i}+\frac{1}{g^k}\Bigg)\right\}\right|\\
        \le &\sum_{t = 1}^k\limsup_{N\rightarrow\infty}\frac{1}{N}\left|\left\{1 \le n \le N\ |\ M_g^{a_{1,n}+t}y \in \Bigg[\sum_{i = 1}^k\frac{d_i}{g^i},\sum_{i = 1}^k\frac{d_i}{g^i}+\frac{1}{g^k}\Bigg)\right\}\right| = \frac{k}{g^k} \le \frac{g^{\sigma k }}{g^k},
    \end{alignat*}
    so the desired result now follows from Theorem \ref{StrongestHotSpot}.
\end{proof}

\begin{remark}
    We see that the converse to Lemma \ref{DistributionNormalImpliesgNormal} is not true as a consequence of Example \ref{QWithoutHotSpot}.
\end{remark}

\begin{proof}[Proof of Theorem~\ref{MainTheoremForPowersOf2}]
    Theorem \ref{UN=UDN} tells us that $\mathcal{UN}(Q) = \mathcal{UDN}(Q)$. 
    Theorem \ref{DN(Q)ImpliesUDN(Q)} tells us that $\mathcal{DN}(Q) = \mathcal{UN}(Q)$.
    Theorem \ref{StrongestHotSpotForDDGBasicSequences}(ii) tells us that $\mathcal{UN}(Q) = \mathcal{N}(Q)$ when we have ergodicity. 
    Lemma \ref{DistributionNormalImpliesgNormal} tells us that $\mathcal{DN}(Q) \subseteq \mathcal{N}_g$, so it only remains to show that $\mathcal{N}_g \subseteq \mathcal{DN}(Q)$. 
    Let $\mathcal{X}' = (X',\mathscr{B}',\mu',T')$ and $E \in \mathscr{B}'$ both be as in Lemma \ref{DensityCalculationLemma}, and as before let us identify $X$ with $E$.
    Since $Q$ is deterministic, $\mathcal{X}$ has 0 entropy, so $\mathcal{X}'$ also has 0 entropy by Abramov's formula.
    Let $y \in \mathcal{N}_g$ be arbitrary and let $\nu$ be any weak$^*$-limit of the sequence $\left\{\frac{1}{N}\sum_{n = 1}^N\delta_{T'^n(x),M_g^n(y)}\right\}_{N = 1}^\infty$. 
    We see that $\nu$ is a joining of $\mu'$ and $m$, where $m$ denote the Lebesgue measure on $[0,1)$. 
    in Since $M_g$ is Bernoulli, it is disjoint (in the sense of Furstenberg) from $T'$, hence $\nu = \mu'\times m$. 
    Since $\nu$ was an arbitrary weak$^*$ limit, we see that $(x,y)$ is a generic point for $T'\times M_g$ with respect to $\mu\times m$.
    To see that $y \in \mathcal{DN}(Q)$, let $0 \le a < b \le 1$ be arbitrary, recall that $E$ is clopen and observe that
    
    \begin{alignat*}{2}
        &\lim_{N\rightarrow\infty}\frac{1}{N}\left|\left\{1 \le n \le N\ |\ q_n\cdots q_1y \in (a,b)\right\}\right|\\
        =& \lim_{N\rightarrow\infty}\frac{1}{N\mu'(E)}\left|\left\{1 \le n \le N\ |\ \left(T'\times M_g\right)^n(x,y) \in E\times(a,b)\right\}\right| = b-a.
    \end{alignat*}
\end{proof}

\section{Questions and Conjectures}\labs{Conjectures}

The present paper is only the beginning of the theory of dynamically generated basic sequences. 
In this section we give many question and conjectures to illustrate how much more there is to be done.
Our first conjecture was discussed in detail in Remark \ref{RemarkAboutHotSpot}, so here we only state an abbreviated version.

\begin{conjecture}\label{HotSpotConjecture}
    Theorem \ref{StrongestHotSpotForDDGBasicSequences} holds for any determinstic dynamically generated basic sequence $Q$.
\end{conjecture}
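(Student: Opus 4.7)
The plan is to remove the hypotheses $\int_X \log(f)\,d\mu < \infty$ and (for part (ii)) the ergodicity of $(X,\mathscr{B},\mu,T)$ in Theorem~\ref{StrongestHotSpotForDDGBasicSequences}, keeping only the determinism (zero-entropy) assumption. Note that the proof of part (i) in the excerpt already goes through for non-ergodic systems because it uses the conditional expectation $\tilde f := \mathbb{E}[\log(f)\mid \mathcal{I}_T]$ rather than a uniform constant; the real work therefore lies in (a) controlling the same argument when $\log f \notin L^1(\mu)$ and (b) bounding the measure of $S_D$ in part (ii) without ergodicity.

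For (a), I would truncate $f$ at level $M$. Let $X_M := \{f \le M\}$ and $\tilde f_M := \mathbb{E}[\log(f)\mathbbm{1}_{X_M} \mid \mathcal{I}_T]$, which always lies in $L^1(\mu)$. Over the orbit positions $n$ with $T^n x \in X_M$, the proof of Theorem~\ref{StrongestHotSpotForDDGBasicSequences} goes through verbatim with $\tilde f_M$ in place of $\tilde f$ in bound~\eqref{EquationNeedingFiniteIntegral}. For the remaining indices, where $q_n > M$, the associated intervals $I_{D,B}$ already have length at most $1/M$, and the deterministic property of $Q$ keeps the number of distinct $(KL)$-blocks meeting this bad set subexponential. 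Letting $M \to \infty$ after $\epsilon$, $\sigma$, $W$, and $L$ have been fixed should render the exceptional contribution negligible.

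For (b), the obstruction is that ergodicity provides a single uniform constant $M = \int \log f\,d\mu$ controlling the lengths of $I_{D_j,w}$ and hence $J_w$. Without ergodicity, the decomposition $\mu = \int \mu_\omega\, d\nu(\omega)$ has $\nu$-a.e.\ component of zero entropy, but a generic point for $\mu$ need not be generic for any individual $\mu_\omega$ (see Example~\ref{NonErgodicExample}), so one cannot cleanly reduce to the ergodic case. I would instead stratify $X$ directly by the level sets of $\tilde f$: on each stratum $\{\tilde f \in [M_k, M_{k+1})\}$ apply the ergodic-style argument with $M_{k+1}$ in place of $M$, weighted by the $\mu$-measure of the stratum.

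The hard part will be summing the stratum-wise estimates when $\tilde f$ is unbounded. In the integrable case these contributions decay quickly; in the non-integrable case one must exploit that $m(I_{D,B}) \approx e^{-KL\tilde f(x)}$ decays exponentially in $\tilde f$, hopefully beating the entropy-type growth in the count of blocks on high-$\tilde f$ strata. Combined with the truncation from (a) and a truncated Shannon--McMillan--Breiman argument applied to the partition $\xi(M) := \{f^{-1}(\{i\})\}_{2 \le i \le M} \cup \{f^{-1}(\mathbb{N}_{>M})\}$, this should yield the conjectured extension; the delicate bookkeeping required to avoid factors growing with $M$ is, I expect, the main technical novelty that prevents the conjecture from being proved by the current techniques alone.
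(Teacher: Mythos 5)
This statement is an open conjecture in the paper, not a theorem: the authors explicitly say in Remark~\ref{RemarkAboutHotSpot} that they have no proof and no counterexample, and they isolate exactly the two obstructions you identify (the role of $\int_X\log(f)\,d\mu<\infty$ in bounding the interval lengths $m(I_{D,B})$, and the role of ergodicity in controlling $\mu\times m(S_D)$ versus $\mu\times m(E_{D,B})$). So there is no proof in the paper to compare against, and your proposal does not supply one either; it is a plan whose decisive steps are left open, as your own closing sentence concedes.

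The concrete gap is in step (a). The quantity that must be bounded is $J_w$, the number of intervals making up $E'(w)$, and the bound $J_w\le 4K\exp\left(KL\left(\tilde f(x_w)+1\right)-\frac{1}{2}LW^2\epsilon^2\right)$ comes from a \emph{lower} bound on $m\left(I_{D_j,w}\right)=\exp\left(-KLf_L(x_w)\right)$, where $f_L(x_w)=\frac{1}{KL}\sum_{n=1}^{KL}\log\left(f\left(T^nx_w\right)\right)$ is the Birkhoff average over the whole window of length $KL$. Restricting to positions $n$ with $T^nx\in X_M=\{f\le M\}$ controls only the first base of the window, not the other $KL-1$ bases, so it does not bound $f_L$; and replacing $\tilde f$ by $\tilde f_M=\mathbb{E}[\log(f)\mathbbm{1}_{X_M}\mid\mathcal{I}_T]$ changes the quantity being estimated rather than estimating it, since the actual interval lengths are still governed by the untruncated bases. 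In the ergodic case with $\int_X\log(f)\,d\mu=\infty$ one has $f_L\rightarrow\infty$ almost everywhere by the ergodic theorem for nonnegative non-integrable functions, so $J_w$ grows super-exponentially in $L$ on a full-measure set and the cancellation in Equation~\eqref{EquationNeedingFiniteIntegral} is lost no matter how $\sigma$ is chosen; sending $M\to\infty$ after fixing $L$ cannot repair this because the bad contribution does not vanish for any fixed $M$. Your step (b) stratification is a plausible idea, but you do not carry out the summation over strata, which is precisely where the difficulty lives. As written, the proposal restates the known obstructions rather than overcoming them, so the conjecture remains open.
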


A positive answer to Conjecture \ref{HotSpotConjecture} would also yield a positive answer to our next conjecture, but we state it nonetheless because it is of independent interest.

\begin{conjecture}\label{ZeroEntropyConjecture}
    If $Q$ is a determinstic dynamically generated basic sequence, then $\mathcal{N}(Q) = \mathcal{UN}(Q) = \mathcal{UDN}(Q) = \mathcal{DN}(Q)$.
\end{conjecture}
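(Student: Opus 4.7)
Most of the conjecture is already in hand. Theorem~\ref{UN=UDN} supplies $\mathcal{UN}(Q)=\mathcal{UDN}(Q)$ for every dynamically generated $Q$, and Theorem~\ref{DN(Q)ImpliesUDN(Q)} supplies $\mathcal{DN}(Q)=\mathcal{UDN}(Q)$ for every deterministic dynamically generated $Q$. Since $\mathcal{UN}(Q)\subseteq\mathcal{N}(Q)$ is automatic, the conjecture collapses to the single reverse inclusion $\mathcal{N}(Q)\subseteq\mathcal{UN}(Q)$ for deterministic $Q$; equivalently, a $Q$-normal number automatically satisfies the base-refined frequency condition that defines $\mathcal{UN}(Q)$. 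That one inclusion is the sole content to be proved.

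The first route I would attempt is to strip the two extra hypotheses from Theorem~\ref{StrongestHotSpotForDDGBasicSequences}(ii), which already delivers $\mathcal{N}(Q)\subseteq\mathcal{UN}(Q)$ under ergodicity and $\int_X\log f\,d\mu<\infty$. The ergodicity assumption should yield to the ergodic decomposition $\mu=\int\mu_\omega\,d\pi(\omega)$: entropy is affine, so $\pi$-almost every component has zero entropy, and the generic point $x$ distributes its orbit across components according to $\pi$. I would partition $X$ according to the level sets of $\widetilde{f}:=\mathbb{E}[\log f\mid\mathcal{I}_T]$ up to precision $\varepsilon$, and apply the McDiarmid/Hot Spot argument from the proof of part~(ii) piece by piece, then reassemble. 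Removing the $\log f$-integrability hypothesis is more delicate, and my plan is a truncation: replace $f$ by $f_M:=\min(f,g^M)$, producing an auxiliary dynamically generated basic sequence $Q^{(M)}$ with bounded bases, and argue that $\mathcal{N}(Q)\subseteq\mathcal{N}(Q^{(M)})$ in a sufficiently strong quantitative sense that Theorem~\ref{StrongestHotSpotForDDGBasicSequences}(ii), applied to each $Q^{(M)}$, transfers in the limit $M\to\infty$ to the conclusion for $Q$.

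An alternative route, which I find conceptually cleaner, is to establish a Kamae--Weiss-style selection principle tailored to dynamically generated basic sequences: for any fixed block of bases $B$, the index set $S_B=\{j:(q_j,\ldots,q_{j+\ell-1})=B\}$ is itself a deterministic subset of $\mathbb{N}$, since its indicator is a continuous factor of the zero-entropy generating system; a deterministic selection from a $Q$-normal number should preserve the prescribed block-frequency statistics, which together with Lemma~\ref{LimitsExistForDGBS} yields uniform $Q$-normality directly. In either route, the decisive step --- and the reason the conjecture appears hard --- is controlling the Lebesgue lengths of the intervals $I_{D,B}$ corresponding to atypical length-$\ell$ blocks of bases as $\ell\to\infty$. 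This is exactly the role played by $\int_X\log f\,d\mu<\infty$ in Theorem~\ref{StrongestHotSpotForDDGBasicSequences}, and, as flagged in Remark~\ref{RemarkAboutHotSpot}, closing this gap without integrability appears to require a Borel--Cantelli-type argument showing that the small fraction of bases producing atypically long intervals contributes negligibly along a single $Q$-normal orbit. That is where I expect the main obstruction to lie.
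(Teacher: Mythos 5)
The statement you are proving is stated in the paper as an open conjecture (Conjecture~\ref{ZeroEntropyConjecture}); the paper contains no proof of it, only the special case Theorem~\ref{MainCorollary}, which adds the hypotheses of ergodicity and $\int_X\log(f)\,d\mu<\infty$. Your reduction is correct and is precisely the paper's own: since $\mathcal{UN}(Q)=\mathcal{UDN}(Q)$ (Theorem~\ref{UN=UDN}), $\mathcal{DN}(Q)=\mathcal{UDN}(Q)$ for deterministic $Q$ (Theorem~\ref{DN(Q)ImpliesUDN(Q)}), and $\mathcal{UN}(Q)\subseteq\mathcal{N}(Q)$, everything hinges on $\mathcal{N}(Q)\subseteq\mathcal{UN}(Q)$, which the paper obtains from Theorem~\ref{StrongestHotSpotForDDGBasicSequences}(ii) only under the extra hypotheses. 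Your diagnosis of the obstruction --- controlling the lengths of the intervals $I_{D,B}$ for atypical blocks of bases --- is the same one the authors articulate in Remark~\ref{RemarkAboutHotSpot}, and your first route (strip the hypotheses from the Hot Spot theorem) is exactly Conjecture~\ref{HotSpotConjecture}, which the authors note would imply Conjecture~\ref{ZeroEntropyConjecture}. So you have correctly located the problem, but you have not solved it, and you say as much.

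Two of your proposed sub-steps have concrete difficulties worth flagging. First, the truncation $f_M=\min(f,g^M)$ produces a basic sequence $Q^{(M)}$ whose expansion of a given real $y$ is a \emph{different} digit string in different positions, so the asserted inclusion $\mathcal{N}(Q)\subseteq\mathcal{N}(Q^{(M)})$ is not even a comparison of like objects without substantial work; worse, Example~\ref{NecessityOfFiniteIntegral}(ii) shows that when $\int_X\log(f)\,d\mu=\infty$ the digits sitting over the unboundedly large bases can be altered on sets of full relative length without leaving $\mathcal{UN}(Q)$, even though those digits dominate the count in any bounded-base refinement --- precisely the phenomenon a truncation-and-limit argument must control, and the reason the authors isolate the integrability hypothesis rather than truncating it away. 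Second, your selection-rule route presupposes a Kamae--Weiss theory for the index sets $S_B$, which the paper explicitly lists as open (Question~\ref{SelectionRuleQuestion}) and warns is more delicate than in base $g$ because digits and bases must be selected in tandem. In short: your proposal is an accurate map of the terrain, consistent with the paper's own discussion, but it does not close the gap, and the gap is real --- the statement remains a conjecture.
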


For a converse to Conjecture \ref{ZeroEntropyConjecture} we choose to be even bolder.

\begin{conjecture}\label{PositiveEntropyConjecture}
    If $Q$ is a non-deterministic dynamically generated basic sequence, then the following hold:
    \begin{enumerate}[(i)]
        \item $(\mathcal{N}(Q)\cap\mathcal{DN}(Q))\setminus\mathcal{UN}(Q)$ has Hausdorff dimension 1 and is $D_2\left(\pzt\right)$-complete.
        \item $\mathcal{DN}(Q)\setminus\mathcal{N}(Q)$ has Hausdorff dimension 1 and is $D_2\left(\pzt\right)$-complete.
        \item $\mathcal{N}(Q)\setminus\mathcal{DN}(Q)$ has Hausdorff dimension 1 and is $D_2\left(\pzt\right)$-complete.
    \end{enumerate}
\end{conjecture}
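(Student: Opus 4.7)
The hypothesis that $Q$ is non-deterministic translates, via Lemma~\ref{DeterministicSystemsProduceDeterministicSequences}, to $(X,\mathscr{B},\mu,T)$ having strictly positive measure-theoretic entropy. By Sinai's factor theorem such a system admits a nontrivial Bernoulli factor, and the plan is to use this factor as the source of genuine combinatorial randomness inside $Q$. Concretely, for each $M \in \mathbb{N}$ I would identify a positive-density subset $S_M \subseteq \mathbb{N}$ of positions along which the restriction of $Q$ is asymptotically i.i.d.\ over a finite alphabet of bases; this is the abstract analogue of the situation in Examples~\ref{DN(Q)ButNOtN(Q)}--\ref{N(Q)ButNotDN(Q)}, where $Q$ itself is Bernoulli. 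The members of each of the three difference sets will then be produced by manipulating digits along carefully chosen sub-sequences of $S_M$.

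For part~(ii), $\mathcal{DN}(Q)\setminus\mathcal{N}(Q)$, I would adapt Example~\ref{DN(Q)ButNOtN(Q)}: force forbidden digit values at positions of $S_M$ whose bases equal some fixed $b$, destroying the correct relative frequency of that digit while compensating elsewhere so that the orbit $(q_n\cdots q_1 y)$ remains uniformly distributed. For part~(iii), $\mathcal{N}(Q)\setminus\mathcal{DN}(Q)$, I would mirror Example~\ref{N(Q)ButNotDN(Q)}: let the digit at each $n\in S_M$ have a small correlation with $q_n$, engineered so that unconditional block frequencies remain $P_D$ but the average distribution of $q_n\cdots q_1 y$ acquires a detectable bias. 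Part~(i) is the genuinely new step: to exhibit a point in $\mathcal{N}(Q)\cap\mathcal{DN}(Q)\setminus\mathcal{UN}(Q)$, I would perturb digits only at positions in $S_B\cap S_M$ for a fixed base-block $B$, shifting the conditional ratio $N_n^Q(D,B,y)/Q_n(D,B)$ away from $1$, and then compensate by symmetric perturbations along a family of disjoint base-blocks $B_1,B_2,\ldots$ whose digit shifts cancel simultaneously in the unweighted count $N_n^Q(D,y)$ and in every distributional average $\tfrac{1}{N}\sum_{n\le N}\mathbbm{1}_{(a,b)}(q_n\cdots q_1 y)$. The existence of such a compensating family is exactly where positive entropy is essential: the Bernoulli factor supplies enough base-block variety of positive density to carry out the cancellation.

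To upgrade existence to Hausdorff dimension $1$, I would take the working set $S_M$ to have arbitrarily small upper density and let digits at positions in its complement range freely over all admissible values. A standard Cantor-set / mass-distribution estimate in the style of \cite{AireyManceHDDifference} then gives dimension arbitrarily close to $1$, hence equal to $1$, in each of the three cases. For $D_2(\bP_3^0)$-completeness, each of $\mathcal{N}(Q)$, $\mathcal{DN}(Q)$, $\mathcal{UN}(Q)$ is naturally a $\bP_3^0$ set (an intersection over blocks $D$ or intervals $(a,b)$ of $\bS_2^0$ conditions), so every difference lies in $D_2(\bP_3^0)$ on general grounds. Completeness would follow by adapting the reduction scheme of \cite{AireyJacksonManceComplexityCantorSeries}: encode a pair of disjoint $\bP_3^0$ conditions into two interleaved families of coding positions inside the working set and verify that the resulting map on the Baire space is continuous and preserves membership in both of the relevant normality classes.

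The hardest step will be part~(i), and within it the construction of even a single element of $(\mathcal{N}(Q)\cap\mathcal{DN}(Q))\setminus\mathcal{UN}(Q)$. For Bernoulli bases the cancellation sketched above is transparent, but for a general positive-entropy system the Bernoulli factor is only a factor, and perturbations lifted back to $Q$ may interact in uncontrolled ways with the non-Bernoulli fibre, spoiling either the unweighted block frequencies or the distributional property. I expect that pushing through the cancellation at the level of $Q$ itself will require a joining-theoretic argument, or a quantitative disjointness-of-Pinsker-factor input, to ensure that the lifted perturbations inherit the independence needed to keep $y$ in $\mathcal{N}(Q)\cap\mathcal{DN}(Q)$. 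The completeness reductions inherit the same difficulty, since the reduction for part~(i) must simultaneously preserve membership in $\mathcal{N}(Q)\cap\mathcal{DN}(Q)$ and failure of $\mathcal{UN}(Q)$, forcing a substantially more intricate coding than in parts (ii) and (iii).
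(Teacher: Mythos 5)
The statement you set out to prove is stated in the paper as Conjecture \ref{PositiveEntropyConjecture}: the authors give no proof, and only report that for the specific Bernoulli-generated $Q$ of Examples \ref{DN(Q)ButNOtN(Q)} and \ref{N(Q)ButNotDN(Q)} the conclusions of parts (ii) and (iii) ``can be checked with a more detailed analysis.'' There is therefore no proof in the paper to compare against, and your proposal must be judged as a plan of attack on an open problem. As such it is a reasonable outline, but it contains a genuine gap at its foundation rather than merely hard technical steps.

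The gap is the very first move: you propose to extract, from the positive entropy of $(X,\mathscr{B},\mu,T)$, ``a positive-density subset $S_M\subseteq\mathbb{N}$ of positions along which the restriction of $Q$ is asymptotically i.i.d.'' Sinai's factor theorem produces only a measurable Bernoulli factor of the abstract measure-preserving system; the factor map is defined $\mu$-a.e., need not interact with the topological generating function $f$, and in particular need not see the single orbit $(T^nx)_{n\ge 1}$ of the fixed generic point $x$ that actually produces $Q$. The paper flags precisely this obstruction in Remark \ref{RemarkAboutJoinings}: even the stronger fact that $\mathcal{X}^f$ splits measurably as $\mathcal{X}\times\mathcal{B}'$ cannot currently be exploited because ``the isomorphism is measurable and not topological'' and ``may lose all information related to the point $x$.'' Until positive entropy is converted into a concrete combinatorial statement about the word $Q$ itself (for instance, a positive-density set of positions at which blocks of bases branch with controlled conditional frequencies), none of your three constructions, nor the Hausdorff-dimension or $D_2(\pzt)$-completeness arguments layered on top of them, can be carried out. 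Two smaller issues: Lemma \ref{DeterministicSystemsProduceDeterministicSequences} equates determinism with zero entropy only under the hypothesis $h_\mu(\{f^{-1}(i)\}_{i = 2}^\infty)<\infty$, and non-determinism can also arise from failure of condition (ii) of the definition (divergence of $\sum_n -\overline{d}(E_n)\log(\overline{d}(E_n))$), a case your plan does not address; and your completeness step assumes the reduction of \cite{AireyJacksonManceComplexityCantorSeries} can be made to respect membership in $\mathcal{N}(Q)\cap\mathcal{DN}(Q)$ simultaneously, which is exactly the part of the coding that is not routine.
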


It can be checked with a more detailed analysis that for the $Q$ appearing in Example \ref{DN(Q)ButNOtN(Q)} we have that $\mathcal{DN}(Q)\setminus\mathcal{N}(Q)$ has Hausdorff dimension 1 and is $D_2\left(\pzt\right)$-complete.
Similarly, it can be checked with a more detailed analysis that for the $Q$ appearing in Example \ref{N(Q)ButNotDN(Q)} we have that $\mathcal{N}(Q)\setminus\mathcal{DN}(Q)$ has Hausdorff dimension 1 and is $D_2\left(\pzt\right)$-complete.
We note that for any dynamically generated basic sequence $Q$, the sets $\NQ$, $\RNQ$, and $\DNQ$ are all $\bP_3^0$-complete by \cite{AireyJacksonManceComplexityCantorSeries}.

As was mentioned in the introduction, two classical results in the theory of normality that make use of Theorem \reft{wall} are Rauzy's Theorem \cite{NormalityPreservationByAddition} on normality preservation through addition, as well as the Kamae-Weiss Selection rule \cite{SelectionRules1,SelectionRules2}.
Consequently, we are left with the following questions.

\begin{question}\label{SelectionRuleQuestion}
    Given a dynamically generated basic sequence $Q$, what are the selection rules for $Q$?
\end{question}

We point out that selection rules for a dynamically generated basic sequence $Q$ are much more delicate than the selection rules for base $g$ expansions.
When working with base $g$ expansions, it suffices to select a sequence of digits and use them to construct a new number.
However, when working with base $Q$ expansion, we must select pairs of digits and bases together, and this introduces many additional complications.

\begin{question}\label{NormalityPreservationUnderAdditionQuestion}
    Let $Q$ be a dynamically generated basic sequence.
    \begin{enumerate}[(i)]
        \item When is $\mathcal{N}(Q)$ preserved under addition?

        \item When is $\mathcal{DN}(Q)$ preserved under addition?

        \item When is $\mathcal{UN}(Q)$ preserved under addition?
    \end{enumerate}
\end{question}

There are also results that are not too difficult to prove for base $g$ expansions, but become much harder when dealing with Cantor series.
For example, it is simple to show that base $g$ normality is preserved under integer multiplication, and with a little more work, it can be shown that it is even preserved under rational multiplication.
While it is clear that distribution normality is preserved under integer multiplication for any basic sequence $Q$, the situation is not so simple for $Q$-normality. Let 
$$
\Xi(Q)=\{x \in \NQ : nx \notin \NQ \forall n \in \N_{\geq 2}\}.
$$
It was observed by Mance in \cite{ManceFractalFunctions} that there exists a basic sequence $Q$ for which $\Xi(Q)$ is non-empty. Furthermore, there is a large class of basic sequences $Q$ where $\dimh{\Xi(Q)}=1$. This follows from a small modification of the proof of Theorem~3.5 in the paper of Airey and Mance \cite{AireyManceHDDifference}. In order to do this, we instead let $\xi=E_0.E_1E_2\ldots \in \mathcal{N}(P)$ satisfy the property that $E_n \leq \log p_n$. This is possible by the proof of Theorem~3.12 of \cite{ManceFractalFunctions}. After this change, Theorem~3.5 of \cite{AireyManceHDDifference} describes a large class of $Q$ where
$$
\dimh{\Xi(Q) \cap \NQ \setminus \DNQ}=1.
$$
Furthermore, for $Q$ where $q_n \to \infty$, it was shown by Airey and Mance \cite{AireyManceNormalPreserves} that for every $r \in \Q \setminus \Z$ $\dimh{\{x \in \DNQ : rx \notin \DNQ\} }=1$. It was also shown by Airey, Mance, and Vandehey \cite{AireyManceVandehey} that there exists a $Q$ such that
$$
\dimh{\{x \in [0,1) : rx+s \in \NQ \setminus \DNQ \ \forall r \in \Q \setminus \{0\}, \ \forall s \in \Q \}}=1.
$$

In this direction we have proven Theorem \ref{NormalityPreservationUnderRationalMultiplication}, but we are still left with the following conjecture and questions.

\begin{conjecture}
    If $Q$ is a dynamically generated basic sequence, then $\mathcal{UN}(Q)$ is preserved under rational multiplication.
\end{conjecture}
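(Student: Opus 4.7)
The plan is to proceed by mirroring the strategy of Theorem \ref{NormalityPreservationUnderRationalMultiplication} but through the skew-product $T \rtimes M$ on $X \times \mathbb{T}$ and with entropy arguments refined so as to drop the standing hypothesis $h_\mu(T)<\infty$. By Theorem \ref{UN=UDN} it suffices to show that $\mathcal{UDN}(Q)$ is preserved by rational multiplication. Writing $\frac{r}{s} = r \cdot \frac{1}{s}$ with $r \in \mathbb{Z}\setminus\{0\}$ and $s \in \mathbb{N}_{\ge 2}$, I split the problem into the integer-multiplication case and the division-by-$s$ case.

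For integer multiplication, the argument is direct and does not require any entropy or integrability hypothesis. Consider $\phi_r:X\times\mathbb{T}\to X\times\mathbb{T}$ defined by $\phi_r(x,z) = (x, rz \bmod 1)$. One checks that $\phi_r$ commutes with $T\rtimes M$ and satisfies $\phi_{r*}(\mu\times m) = \mu\times m$. Viewing the second coordinate as the circle, $\phi_r$ is continuous, so if $y \in \mathcal{UDN}(Q)$ then the orbit of $(x,ry) = \phi_r(x,y)$ equidistributes with respect to $\phi_{r*}(\mu\times m) = \mu\times m$; when an $F \in C(X\times[0,1))$ is not periodic at $1$, the finitely many discontinuities of $F\circ \phi_r$ lie on a set of $\mu\times m$-measure zero, so Portmanteau still yields convergence. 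This reduces the conjecture to proving $y/s \in \mathcal{UDN}(Q)$ whenever $y \in \mathcal{UDN}(Q)$.

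For the division step, let $\lambda$ be any weak-$*$ limit point of $\{\frac{1}{N}\sum_{n=1}^N \delta_{(T\rtimes M)^n(x,y/s)}\}_{N=1}^\infty$. Then $\lambda$ is $T\rtimes M$-invariant, projects to $\mu$ on $X$, and satisfies $\pi_*\lambda = \mu\times m$ where $\pi(x,z)=(x,sz\bmod 1)$. The goal is to upgrade the argument in Theorem~\ref{NormalityPreservationUnderRationalMultiplication} from absolute to \emph{conditional} (fibre) entropy relative to the base $(X,\mathscr{B},\mu,T)$. Disintegrating $\lambda = \int \lambda_x\,d\mu(x)$, invariance gives $M_{f(x)*}\lambda_x = \lambda_{Tx}$, and $\pi$-compatibility gives that the push-forward of $\lambda_x$ under $z\mapsto sz\bmod 1$ is $m$. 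Using a relative Abramov--Rokhlin formula for skew products (in the fiber-entropy form of Bogenschütz--Kifer), the fibre entropy of $\lambda$ majorizes that of $\mu\times m$ since $\mu\times m$ is a factor of $\lambda$ via $\pi$; but it is also bounded above by the topological fibre entropy $\int\log f\,d\mu$. When $\int\log f\,d\mu<\infty$, equality of fibre entropies forces $\lambda_x$ to be the measure of maximal entropy on $\mathbb{T}$ for $M_{f(x)}$ along $\mu$-a.e.\ orbit, i.e.\ $\lambda_x = m$, and hence $\lambda = \mu\times m$. This already removes the $h_\mu(T)<\infty$ hypothesis appearing in Theorem~\ref{NormalityPreservationUnderRationalMultiplication}.

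The principal obstacle is the case $\int\log f\,d\mu = \infty$, which is genuinely outside the reach of the entropy-uniqueness argument because both fibre entropies are infinite. Here the plan is two-pronged. First, truncate: replace $f$ by $f_N = \min(f,N)$ to obtain approximating systems of bounded fibre entropy, prove a Hot-Spot-type inequality (using Theorem~\ref{StrongestHotSpotForDDGBasicSequences} on the truncations together with McDiarmid-type concentration as in Lemma~\ref{LemmaUsingMcdiarmid}) for the digits of $\frac{r}{s}y$ in base $Q$, and then let $N\to\infty$. Second, show that under $\int\log f\,d\mu=\infty$ the bases $q_n$ tend to infinity on a set of full density, so that the digits of $y/s$ can be obtained from those of $y$ by a ``long-division'' procedure whose output inherits the uniform-normality counting bounds up to a controllable error; this error is absorbed by the $\sigma$-parameter in the Hot Spot hypothesis of Theorem~\ref{StrongestHotSpotForDDGBasicSequences}, yielding $\frac{r}{s}y \in \mathcal{UN}(Q)$ directly. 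Making the long-division coding rigorous and uniform in the block length is expected to be the hardest technical step, and is analogous to the classical proof that $\mathcal{N}_g$ is preserved by rational multiplication, but carried out base-by-base along $Q$ rather than for a single base.
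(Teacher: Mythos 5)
This statement is one of the paper's open conjectures; the paper offers no proof of it, so there is nothing to compare your argument against except the partial result it is meant to extend, namely Theorem~\ref{NormalityPreservationUnderRationalMultiplication}. Your proposal is best read as a research plan, and it has genuine gaps that leave the conjecture unproven. The parts that do work are the reduction via Theorem~\ref{UN=UDN} to $\mathcal{UDN}(Q)$, the integer-multiplication step via the commuting, measure-preserving circle map $\phi_r$, and the observation that the division-by-$s$ step only requires identifying the weak-$*$ limit $\lambda$ of the empirical measures along the orbit of $(x,y/s)$, given that $\pi_*\lambda=\mu\times m$ for the $s$-to-one equivariant map $\pi$. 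Passing from absolute entropy to fibre entropy relative to the base is also the right idea for removing the hypothesis $h_\mu(T)<\infty$. However, even in the case $\int_X\log(f)\,d\mu<\infty$ your argument asserts rather than proves the crucial uniqueness statement: that $\mu\times m$ is the \emph{unique} invariant measure projecting to $\mu$ with maximal fibre entropy. Your phrasing ``equality of fibre entropies forces $\lambda_x$ to be the measure of maximal entropy on $\mathbb{T}$ for $M_{f(x)}$'' is not a proof, because the disintegration satisfies $M_{f(x)*}\lambda_x=\lambda_{Tx}$ rather than $M_{f(x)*}\lambda_x=\lambda_x$; the fibre measures are not invariant for any single circle map, so one cannot quote the uniqueness of the measure of maximal entropy for $M_g$. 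A relative variational principle together with a uniqueness argument for the random expanding system on the fibres is needed here, and it must be formulated so as to remain valid when $h_\mu(T)=\infty$ (where the Abramov--Rokhlin identity $h(\lambda)=h(\mu)+h(\lambda\mid\text{base})$ is vacuous).

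The more serious gap is the case $\int_X\log(f)\,d\mu=\infty$ and, independently, the case of non-deterministic $Q$, both of which the conjecture covers. Your fallback plan invokes Theorem~\ref{StrongestHotSpotForDDGBasicSequences}, but that theorem assumes both zero entropy of $(X,\mathscr{B},\mu,T)$ and $\int_X\log(f)\,d\mu<\infty$; neither hypothesis is available in the generality of the conjecture, so the truncation-plus-Hot-Spot route is circular in exactly the regime where it is needed. The ``long-division'' coding of the base-$Q$ digits of $y/s$ from those of $y$ is left entirely unspecified, and you yourself flag it as the hardest step; in the Cantor series setting the carry propagation interacts with the varying bases $q_n$, so the analogy with the classical base-$g$ argument does not carry over without substantial new work. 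In short: your plan plausibly extends Theorem~\ref{NormalityPreservationUnderRationalMultiplication} by removing the finite-entropy hypothesis on the base (modulo a careful relative maximal-entropy uniqueness argument), but it does not establish the conjecture as stated.
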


\begin{question}
Let $Q$ be a dynamically generated basic sequence.
\begin{enumerate}[(i)]
    \item When is $\mathcal{DN}(Q)$ preserved under rational multiplication?

    \item When is $\mathcal{N}(Q)$ preserved under rational multiplication?
\end{enumerate}
\end{question}

Both having a Hot Spot Theorem and equivalence of normality and distribution normality appear to be essential for developing a theory of normality for a basic sequence $Q$ that resembles that of a base $g$ expansion. It is not clear whether it is possible to have one of these and not the other and what effect this would have. Thus, we ask the following.

\begin{question}
\
\begin{enumerate}[(i)]
    \item Does there exist a basic sequence $Q$ for which $\mathcal{DN}(Q) = \mathcal{N}(Q)$, but for which $Q$ does not admit a Hot Spot Theorem?

    \item Does there exist a basic sequence $Q$ for which $\mathcal{DN}(Q) \neq \mathcal{N}(Q)$, but for which $Q$ admits a Hot Spot Theorem?
\end{enumerate}
\end{question}

\begin{remark}\label{RemarkAboutJoinings}
While we have briefly mentioned joinings and disjointness of dynamical systems in the proof of Theorem \ref{MainTheoremForPowersOf2}, we have not initiated a deeper study of the dynamical properties of the skew product system $\mathcal{X}^f$. 
In personal communications, Tim Austin proved for us that $\mathcal{X}^f$ is an intermediate factor between $\mathcal{X}$ and $\mathcal{X}\times\mathcal{B}$ where $\mathcal{B}$ is a Bernoulli shift of infinite entropy.
It can also be checked that the algebra of sets $S_D$ (discussed in Section \ref{DynamicallyGeneratedBasicSequencesSection}) in $X\times[0,1)$ naturally generate a factor $\mathcal{X}^{\mathcal{N}(Q)}$ of $\mathcal{X}^f$ that carries all of the dynamical information relating to $Q$-normality.
A deeper understanding of the following diagram of factors may help resolve the questions and conjectures posed in this section.

\begin{equation}
    \begin{tikzcd}
  \mathcal{X}\times\mathcal{B} \arrow[d] & \\%
\mathcal{X}^f \arrow[d] \arrow[dr] &  \\
\mathcal{X}& \mathcal{X}^{\mathcal{N}(Q)} 
\end{tikzcd}
\end{equation}

However, we warn the reader that if $\lambda$ is a joining measure on $\mathcal{X}\times\mathcal{B}$, then it does not necessarily project to the measure $\mu\times m$ on $\mathcal{X}^f$, otherwise we would have easily been able to show that $\mathcal{DN}(Q) \neq \mathcal{UDN}(Q)$ when $Q$ is a non-deterministic dynamically generated basic sequence.  
It is also worth noting that if $\mathcal{X}$ has zero entropy, then the main result of \cite{SplittingForDeterministicTimesBernoulli} tells us that $\mathcal{X}^f$ is measurable isomorphic to $\mathcal{X}\times\mathcal{B}'$, where $\mathcal{B}' = ([0,1),\mathscr{L},m,S)$ is Bernoulli.
The reason that we cannot currently make use of this seemingly relevant fact, is that the isomorphism is measurable and not topological.
More concretely, if $Q$ is a basic sequence generated by $(X,\mathscr{B},\mu,T,f,x)$, then a measurable isomorphism may lose all information related to the point $x$.
\end{remark}

\bibliographystyle{abbrv}
\begin{center}
	\bibliography{references}
\end{center}

\end{document}